\newtheorem*{que}{Question}
\newtheorem{ThmIntro}{Theorem}
\newtheorem{PropIntro}[ThmIntro]{Proposition}
\newtheorem{thm}{Theorem}[section]
\newtheorem{cor}[thm]{Corollary}
\newtheorem{lem}[thm]{Lemma}
\newtheorem{prop}[thm]{Proposition}
\newtheorem*{prop3}{Proposition 3}
\theoremstyle{definition}
\newtheorem{defn}[thm]{Definition}
\newtheorem{fac}[thm]{Fact}
\newtheorem*{Ack}{Acknowledgements}
\theoremstyle{remark}
\newtheorem{rem}[thm]{Remark}
\newtheorem{rems}[thm]{Remarks}
\newtheorem{exs}[thm]{Examples}
\newtheorem*{Orga}{Organization}
\numberwithin{equation}{section}
\newcommand{\Z}{\mathbb{Z}}
\newcommand{\N}{\mathbb{N}}
\newcommand{\R}{\mathbb{R}}
\newcommand{\Q}{\mathbb{Q}}
\newcommand{\PP}{\mathcal{P}}
\newcommand{\p}{\mathbb{P}}
\DeclareMathOperator{\trdeg}{trdeg}
\DeclareMathOperator{\dev}{dev}
\DeclareMathOperator{\krull}{Krull}
\DeclareMathOperator{\Ann}{Ann}
\newcommand{\exseq}[3]{#1 \hookrightarrow #2 \twoheadrightarrow #3}
\newcommand{\supp}{\textnormal{Supp}}
\newcommand{\GGG}{\Gamma}
\newcommand{\LLL}{\Lambda}
\newcommand{\OO}{\Omega}
\begin{document}
\title{Metabelian groups with large return probability}
\author{Lison Jacoboni}
\address{Laboratoire de mathématiques d'Orsay, Univ. Paris-Sud, CNRS, Université Paris Saclay, 91405 Orsay, France}
\email{lison.jacoboni@math.u-psud.fr}
\date{\today}
\thanks{The author was supported by ANR-14-CE25-0004 GAMME}

\maketitle

\begin{abstract}
We study the return probability of finitely generated metabelian groups. We give a characterization of metabelian groups whose return probability is equivalent to $\exp(-n^\frac1{3} )$ in purely algebraic terms, namely the Krull dimension of the group. Along the way, we give lower bounds on the return probability for metabelian groups with torsion derived subgroup, according to the dimension. We also establish a variation of the famous embedding theorem of Kaloujinine and Krasner for metabelian groups that respects the Krull dimension. 
\end{abstract}

\tableofcontents

\section{Introduction}

Let $G$ be a countable group. For any probability measure $\mu$ on $G$, one can consider the \emph{random walk on $G$ driven by $\mu$}: this is a sequence $(S_i)_{i \geq 0}$ of random variables valued in $G$ such that $S_0 = X_0$ and $S_{n + 1} = S_n X_{n + 1}$, where $(X_i)_{i \geq 0}$ are independent and identically distributed random variables with probability distribution $\mu$.
If the current state is $x$, the probability of being in $y$ at the next step is $\mu(x^{-1}y)$. This implicitly defines a probability measure $\p_\mu$ on $G^\N$ such that
\begin{equation*}
\p_\mu(S_n = y \mid X_0 = x) = \mu^{(n)}(x^{-1}y),
\end{equation*} 
where $\mu^{(n)}$ denotes the $n$-fold convolution of $\mu$. Recall that if $f$ and $h$ are two functions on $G$, the convolution of $f$ and $h$ is $f * h(g) = \sum_k f(k) h(k^{-1}g)$.

An interesting class of examples arises from a probability measure whose support generates the whole group $G$ as a semigroup. Within this class, a fundamental example is given by a finitely generated group $G$ with $\mu = \mu_S$, the uniform distribution on a finite and symmetric generating set $S$. Such a probability measure satisfies $ \mu_S(g)= \mu_S(g^{-1})$, for all $g \in G$. In general, one says that a probability measure $\mu$ is \emph{symmetric} whenever  this latter relation is true. 

Let $p_{2n}^{\mu, G} = \p_\mu(X_{2n} = e \mid X_0 = e)$ denote the probability of  return in $2n$ steps to the origin $e$ of $G$ (even times $2n$ are considered to avoid parity issues: namely, the simple random walk on $\Z$, with usual generating set, cannot reach $0$ at odd times). 

If $\varphi, \psi$ denote two monotone functions, we use the notation $\varphi \precsim \psi$ if there exist positive constants $c$ and $C$ such that $c\varphi(Ct) \leq \psi(t)$ (possibly for $t$ in $\N$ if $\N$-valued functions are considered). If the symmetric relation $\varphi \succsim \psi$ also holds, we write $\varphi \sim \psi$ and say that $\varphi$ and $\psi$ have the same asymptotic behaviour. This is an equivalence relation. 

A theorem of Pittet and Saloff-Coste \cite{PSC00} asserts that any two symmetric and finitely supported probability measures with generating support give rise to equivalent return probabilities. Let $p_{2n}^G$ denote this invariant, dropping the $G$ whenever the group is clear from the context. 
Also, if $\mu$ is a symmetric probability measure with generating support and finite second moment, that is $\sum_g \lvert g \rvert^2 \mu(g) < \infty$, then $p_{2n}^{\mu, G}$ belongs to the class of $p_{2n}^G$.  

\bigskip

Understanding how the random walk behaves allows to have insight into the large-scale geometry of the group. We give below a brief picture of what is known about $p_{2n}^G$, more details are to follow in part \ref{ssec-return proba}.

\medskip

In his thesis, Kesten proved that non-amenable groups are characterized by the fact that they behave the worst, for their return probability decays exponentially fast (\cite{Kesten59}). On the other hand, a nilpotent group, which necessarily has polynomial growth of some definite degree $d$ behaves like $\Z^d$: this follows from Varopoulos (\cite{Var85},\cite{Var92}).

What about amenable groups of exponential growth? Hebisch and Saloff-Coste (\cite{HSC}) proved that if $G$ has exponential growth, then 
\begin{equation}\label{eq-HSC}
p_{2n}^G \precsim \exp(-n^\frac{1}{3}).
\end{equation}

Moreover, if $G$ is a discrete subgroup of a connected Lie group, then $G$ is amenable of exponential growth if and only if $p_{2n}^G \sim \exp(-n^\frac{1}{3})$.

\medskip

However, the world of amenable groups of exponential growth can be very wild, and many other behaviours can occur. As an example, for any nontrivial finite group $F$, the return probability of the wreath product $F \wr \Z^d$ is  
\begin{equation*}
p_{2n}^{F \wr \Z^d} \sim \exp(-n^\frac{d}{d+2})
\end{equation*}
(\cite{Erschler06}).

\medskip 

It appears that, for every small positive $\epsilon$, there exist $1 - \epsilon < \alpha < 1$, and a solvable group whose return probability is equivalent to $\exp(-n^\alpha)$ (see also \cite{PSC02}). These solvable groups can be chosen to be $2$-step solvable (in other words \emph{metabelian} or equivalently having abelian derived subgroup) and hence
are relatively tame within the class of solvable groups of arbitrary derived length.
Comparing with the situation for discrete subgroups of Lie groups, the following question arises naturally.

\begin{que}
Among finitely generated solvable groups, is it possible to characterize the groups whose return probability satisfies $p_{2n} \succsim \exp(-n^\frac{1}{3})$ ?
\end{que}

So far, \ref{eq-HSC} is known to be sharp for lamplighter group $F \wr \Z$, with $F$ a nontrivial finite group (\cite{Var83}), for polycyclic groups (\cite{Alex}), for solvable Baumslag--Solitar (\cite{CGP01}), and for solvable groups of finite Prüfer rank,  (\cite{PSC00},\cite{KL}). A group has \emph{finite Prüfer rank} if there exists an integer $r$ such that any finitely generated subgroup can be generated by at most $r$ elements and the least such $r$ is the rank. 
Lastly, Tessera proved in \cite{T13} the reverse inequality for a class of groups containing all discrete subgroups of solvable algebraic groups over a local field. This class contains in particular lamplighter groups and torsion-free solvable groups of finite Prüfer rank.

\bigskip

Here, we answer the above question in the case of metabelian groups, showing a connexion between the Krull dimension of a metabelian group and the asymptotic behaviour of its return probability. 

\medskip

A metabelian group $G$ is an extension of a abelian group by another abelian group, namely
\begin{equation*}
[G, G] \hookrightarrow G \twoheadrightarrow G_{ab},
\end{equation*}
where we denote by $[G, G]$ the derived subgroup of $G$ and by $G_{ab}$ its abelianization $G/[G, G]$.
The subgroup $[G, G]$ carries a natural structure of $\Z G_{ab}$-module, coming from the action by conjugation.

\medskip

The notion of Krull dimension, introduced in \cite{GR67}, plays an important role in the theory of rings and modules. In \cite{Tus03}, Tushev extended this notion to groups. We recall the classical treatment of Krull dimension in non-commutative ring theory and its generalization to groups in section \ref{sec-Kd}, after which we study it for metabelian groups.

For the moment, it is enough to mention that the Krull dimension of a metabelian group $G$, denoted $\krull(G)$, is characterized by the following two properties (see Proposition \ref{prop-Kd meta}): 
\begin{itemize}
\item If the Krull dimension of the $\Z G_{ab}$-module $[G, G]$ is positive, then they are equal: 
\begin{equation*}
\krull(G) = \krull_{\Z G_{ab}}([G, G]).
\end{equation*}
\item otherwise, when the module $[G, G]$ has Krull dimension $0$ (ie is finite), $\krull(G)$ is defined to be $1$ if $G$ is infinite, $0$ if it is finite.
\end{itemize}

\begin{ThmIntro}\label{thm-characterization}
Let $G$ be a finitely generated metabelian group of Krull dimension $k$. Then,
\begin{equation*}
k \leq 1 ~~ \Leftrightarrow ~~ p_{2n}^G \succsim \exp(-n^\frac1{3}).
\end{equation*}
\end{ThmIntro}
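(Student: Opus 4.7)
My plan is to prove the two implications separately, in each case using Krull-dimension bookkeeping to reduce the problem to groups whose return probability is already understood.

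For the $(\Leftarrow)$ direction I would argue by contraposition: assume $\krull(G)\geq 2$ and produce a quotient of $G$ with faster-than-$\exp(-n^{1/3})$ return probability. Since $G_{ab}$ is finitely generated abelian, $\Z G_{ab}$ is a commutative Noetherian ring, and the hypothesis on the $\Z G_{ab}$-module $[G,G]$ furnishes an associated prime $\mathfrak{p}$ with $\krull(\Z G_{ab}/\mathfrak{p})\geq 2$. Passing to this quotient and, depending on the residue characteristic, selecting either two algebraically independent units of infinite order in $G_{ab}/\mathfrak{p}$ or two commuting independent elements, I would extract a quotient $H$ of $G$ that surjects onto a wreath product $F\wr \Z^2$ for some non-trivial cyclic group $F$. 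Erschler's theorem gives $p_{2n}^{F\wr \Z^2}\sim \exp(-n^{1/2})$, and since return probability does not increase under quotient maps, $p_{2n}^G\precsim \exp(-n^{1/2})$, which is strictly faster than $\exp(-n^{1/3})$ and in particular not $\succsim \exp(-n^{1/3})$.

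For the $(\Rightarrow)$ direction, assume $\krull(G)\leq 1$. The case $\krull(G)=0$ has $G$ finite, and the case where $[G,G]$ is finite with $G$ infinite makes $G$ virtually abelian, hence of polynomial growth; in both situations the bound is immediate. The central case is $\krull_{\Z G_{ab}}([G,G])=1$. Here my strategy is to embed $G$ as a subgroup of a larger group $H$ for which a matching lower bound $p_{2n}^H\succsim \exp(-n^{1/3})$ is already known, and then invoke the standard monotonicity $p_{2n}^G\succsim p_{2n}^H$ valid for finitely generated subgroups. I would construct the embedding via the dimension-respecting Kaloujinine–Krasner variant announced in the abstract: the target $H$ should be a semidirect/wreath construction whose derived subgroup has Krull dimension one and whose abelianization has controlled rank, placing $H$ among classical lamplighters over $\Z$, polycyclic groups, solvable Baumslag–Solitar groups, or solvable groups of finite Prüfer rank, for which the required lower bound is proved in the works of Varopoulos, Pittet–Saloff-Coste, and Tessera cited above.

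The main obstacle, and where I expect the bulk of the work, is proving the dimension-respecting embedding itself. A $\Z G_{ab}$-module of Krull dimension $1$ over the non-domain $\Z G_{ab}$ must be analyzed via its associated primes, separating a torsion part from a torsion-free one and handling each through a different target (the torsion case producing a wreath product with finite lamp group, the torsion-free case producing a finite Prüfer rank group); realizing each piece inside an ambient wreath product whose base is a finitely generated abelian group of the correct rank, without inflating the Krull dimension beyond $1$, is the delicate algebraic step. Once such an embedding is in hand, the theorem reduces to citing the already-established lower bounds on the targets.
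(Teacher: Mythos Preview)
Your $(\Leftarrow)$ direction contains a genuine gap: the quotient approach cannot work. The wreath product $F\wr\Z^2$ with $F$ a nontrivial cyclic group has abelianization $F\times\Z^2$ and therefore needs at least three generators; no $2$-generated group can surject onto it. In particular the free $p$-metabelian group $B_2^{(p)}$ of rank $2$, which has Krull dimension $2$, has no quotient mapping onto any $F\wr\Z^2$, and since its derived subgroup has exponent $p$ it cannot surject onto $\Z\wr\Z$ either. More generally, killing any nonzero submodule of $[B_2^{(p)},B_2^{(p)}]\simeq\mathbb F_p[X^{\pm1},Y^{\pm1}]$ drops the Krull dimension to at most $1$, so the fast decay you want is not visible in any proper quotient. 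The paper works with \emph{subgroups} instead: it shows that a metabelian group of Krull dimension at least $2$ contains a copy of $\Z\wr\Z$ when $\krull^0(G)\geq 2$, or a copy of $B_2^{(p)}$ when $\krull^t(G)\geq 2$, and it computes $p_{2n}^{B_2^{(p)}}\sim\exp(-n^{1/2})$ separately via the Magnus embedding, adapting the Saloff-Coste--Zheng argument for $B_d$.

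Your $(\Rightarrow)$ direction is closer in spirit to the paper, which does use the dimension-preserving Kaloujinine--Krasner embedding to reduce to a split group $M\rtimes Q$. However, the resulting split group is in general neither a lamplighter over $\Z$, nor polycyclic, nor of finite Pr\"ufer rank: $M$ can simultaneously have an infinite torsion part and a nontrivial torsion-free part, and these do not combine into a single group from your list. Rather than embedding further, the paper constructs F\o lner couples of exponential size for $M\rtimes Q$ directly, by induction along a prime filtration $0=M_0<\dots<M_n=M$ with $M_{i+1}/M_i\simeq\Z Q/\mathcal P_i$: each cyclic factor is handled on its own (Noether normalisation in positive characteristic, finite Pr\"ufer rank together with Pittet--Saloff-Coste in characteristic zero), and a lifting lemma glues the F\o lner couples across the extension. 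The lower bound then follows from the Coulhon--Grigor'yan--Pittet criterion.
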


Together with (\ref{eq-HSC}), this implies that the finitely generated metabelian groups with exponential growth and large return probability, that is $p_{2n}^G \sim \exp(-n^\frac1{3})$, are exactly those of dimension $0$ or $1$.

This is a consequence of studying the impact of Krull dimension on the structure of the group, which yields the more precise lower bound:

\begin{ThmIntro}\label{thm-lower bound torsion}
Let $G$ be a metabelian group of Krull dimension $k$, $k \geq 1$. Assume that $[G, G]$ is torsion. Then,
\begin{equation*}
p_{2n}^G \succsim \exp(-n^\frac{k}{k+2}).
\end{equation*}
\end{ThmIntro}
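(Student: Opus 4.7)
The plan is to derive the return probability lower bound from an isoperimetric estimate on $G$: concretely, to show that the Følner function of $G$ is bounded above by $\exp(Cr^k)$, after which the classical Coulhon--Saloff-Coste inequality converts this bound into $p_{2n}^G \succsim \exp(-n^{k/(k+2)})$. The task therefore reduces to producing, for each large $r$, an explicit Følner set in $G$ of cardinality at most $\exp(Cr^k)$ and Følner constant $O(1/r)$.

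To construct such sets, I would first apply the Kaloujinine--Krasner variant proved earlier in the paper. Combined with the torsion hypothesis (which forces the ``lamp'' factor to be finite) and with the algebraic control provided by $\krull(G) = k$, this embedding realises $G$ inside a wreath product of the form $F \wr A$, where $F$ is a finite group and $A$ is a finitely generated abelian group whose torsion-free rank is governed by $k$; after passing to a finite-index subgroup of $A$ one may take $A = \Z^k$.

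Next I would imitate the standard Følner sequence for the lamplighter $F \wr \Z^k$. Pick a finite set of $\Z G_{ab}$-module generators $\alpha_1, \ldots, \alpha_m$ of $[G,G]$ and a finite symmetric generating set $S$ of $G$ projecting onto generators of $G_{ab}$. Let $B_r$ denote the image in $G_{ab}$ of the box $[-r,r]^k$, and $\widetilde B_r$ a lift inside $G$. Define $M_r$ as the finite subset of $[G,G]$ obtained by conjugating the $\alpha_i$ by elements of $\widetilde B_r$ and multiplying together; by the torsion hypothesis $M_r$ is finite, and by the Krull dimension hypothesis one expects $|M_r| \leq \exp(Cr^k)$. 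Set $F_r = M_r \cdot \widetilde B_r$. A direct verification, parallel to the standard lamplighter computation and using that a generator in $S$ at worst shifts $B_r$ by one and conjugates $M_r$ into $M_{r+1}$, then shows that $F_r$ has Følner constant $O(1/r)$ in $G$ and cardinality $\exp(Cr^k)$.

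The main obstacle I anticipate is the cardinality estimate $|M_r| \leq \exp(Cr^k)$. Passing to the quotient ring $R = \Z G_{ab}/\Ann([G,G])$, this becomes a statement about the growth of finitely generated modules over a commutative Noetherian ring of Krull dimension $k$, which should follow via a prime filtration of the module together with a Hilbert-polynomial type estimate applied to each successive quotient. Once the Følner function bound $\exp(Cr^k)$ is in hand, the Coulhon--Saloff-Coste inequality completes the argument.
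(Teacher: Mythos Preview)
Your overall strategy—embed $G$ in a split metabelian group, build explicit F\o lner sets there of cardinality $\exp(Cr^k)$, and convert this via a Coulhon--Grigor'yan--Pittet / Coulhon--Saloff-Coste type inequality—is precisely the paper's route. But your reduction step contains a genuine error.

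The Kaloujinine--Krasner variant (Theorem~\ref{thm-kkv}) does \emph{not} embed $G$ into $F \wr \Z^k$ with $F$ finite. It embeds $G$ into $B \rtimes Q$, where $Q$ is the quotient of the original extension (hence has torsion-free rank $d$, the rank of $G_{ab}$, and $d$ need not equal $k$), and $B$ is an \emph{infinite} torsion abelian group: a finitely generated $\Z Q$-module of Krull dimension $k$. The torsion hypothesis on $[G,G]$ forces $B$ to have bounded exponent, not to be finite. So the ``lamp factor'' is not finite, and the base is $\Z^d$, not $\Z^k$. Your box $[-r,r]^k$ must therefore be $[-r,r]^d$ inside $Q$, and the candidate F\o lner set has cardinality roughly $|M_r|\cdot r^d$. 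The polynomial factor is harmless; the whole problem is to bound $|M_r|$ by $\exp(Cr^k)$ when $M_r$ is the image of a box in $\Z^d$ with $d>k$.

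You correctly flag this cardinality bound as the ``main obstacle'' in your final paragraph, but you have placed it after a reduction that does not exist; it \emph{is} the proof. The paper handles it via Noether normalisation rather than a Hilbert-polynomial argument (Lemma~\ref{lem-folner seq for ring p}): each subquotient $\Z Q/\PP_i$ in the prime filtration of $B$ is a finitely generated $\mathbb F_p$-algebra, hence a finite module over some $\mathbb F_p[z_1,\dots,z_{k_i}]$ with $k_i\le k$, and one checks that the image of the degree-$m$ box of $\Z^d$ lands in a set of size $\exp(Cm^{k_i})$. These estimates are then combined inductively along the filtration via split F\o lner couples (Lemma~\ref{lem-remonte fp t}), after which Corollary~\ref{cor-lower bound} gives the return-probability bound. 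Your Hilbert-polynomial idea could plausibly be made to work as an alternative for the cardinality estimate, but it must be carried out over $\Z Q$ with $Q$ of rank $d$; the drop from $d$ to $k$ is the content of the argument, not a preliminary step.
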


A metabelian group $G$ is \emph{split} if it admit an exact sequence
\begin{align*}
\exseq{M}{G}{Q},
\end{align*}
that splits, where $M$ and $Q$ are abelian groups. The group $G$ is therefore isomorphic to the semi-direct product $M \rtimes Q$.

\begin{PropIntro}\label{prop-split torsion}
Let $G$ be a finitely generated split metabelian group of Krull dimension $k$. If $[G, G]$ is torsion, then
\begin{equation*}
p_{2n}^G \sim \exp(-n^\frac{k}{k+2}).
\end{equation*} 
\end{PropIntro}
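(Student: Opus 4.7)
The lower bound $p_{2n}^G \succsim \exp(-n^{k/(k+2)})$ is exactly the statement of Theorem \ref{thm-lower bound torsion} applied to $G$; it remains to prove the matching upper bound. The plan is to construct a subgroup $H \leq G$ isomorphic to the lamplighter $\mathbb{F}_p \wr \Z^k$ for some prime $p$, and then to invoke Erschler's asymptotic $p_{2n}^{\mathbb{F}_p \wr \Z^k} \sim \exp(-n^{k/(k+2)})$ from \cite{Erschler06} together with the standard monotonicity $p_{2n}^G \precsim p_{2n}^H$ under passage to finitely generated subgroups.

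To build $H$, write $G = M \rtimes Q$ with $M, Q$ abelian, so that $Q = \Z^d \oplus T$ with $T$ finite. Because $M$ is abelian and $[G, G] \leq M$, the subgroup $M$ acts trivially by conjugation on $[G, G]$, and the natural $\Z G_{ab}$-module structure factors through $\Z Q$. Since $[G, G]$ is torsion and finitely generated as a $\Z Q$-module, it is annihilated by some integer $N \geq 2$, and therefore is a module over the commutative noetherian ring $R := (\Z/N\Z)[Q]$. The Krull dimension hypothesis, combined with standard associated-prime arguments over the noetherian $R$-module $[G, G]$, yields an element $w \in [G, G]$ whose annihilator $\mathfrak{p}$ is a prime of $R$ with $\krull(R/\mathfrak{p}) = k$; replacing $\mathfrak{p}$ by a prime lying above a chosen prime divisor $p$ of $N$, we may further assume that $R/\mathfrak{p}$ is a finitely generated $\mathbb{F}_p$-algebra integral domain of Krull dimension $k$.

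Since $R/\mathfrak{p}$ is generated as an $\mathbb{F}_p$-algebra by the images of the elements of $Q$, and the finitely many torsion elements of $Q$ contribute only algebraic images to the fraction field, the transcendence-degree interpretation of Krull dimension for finitely generated $\mathbb{F}_p$-algebra domains provides $k$ elements $q_1, \dots, q_k$ which can be taken from a $\Z$-basis of the free part of $Q$ and whose images in $R/\mathfrak{p}$ are algebraically independent over $\mathbb{F}_p$. Set $\Gamma := \langle q_1, \dots, q_k \rangle \cong \Z^k$; since the $q_i$ were drawn from a $\Z$-basis, $\Gamma$ is a free abelian direct summand of $Q$. The algebraic independence forces $\mathbb{F}_p[\Gamma]$ to inject into $R/\mathfrak{p}$, so the cyclic $\mathbb{F}_p[\Gamma]$-submodule $\mathbb{F}_p[\Gamma] \cdot w \leq [G, G]$ is free of rank $1$, and hence isomorphic as an abelian group to $\bigoplus_\Gamma \mathbb{F}_p$. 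Using the splitting $Q \hookrightarrow G$, the subgroup of $G$ generated by $\mathbb{F}_p[\Gamma] \cdot w$ and $\Gamma$ is the internal semidirect product $(\mathbb{F}_p[\Gamma] \cdot w) \rtimes \Gamma$, where $\Gamma$ acts by conjugation exactly through the $\mathbb{F}_p[\Gamma]$-module structure; this is isomorphic to $\mathbb{F}_p \wr \Z^k$.

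The main obstacle I anticipate is the algebraic extraction of the $q_i$: one must verify that a transcendence basis of the fraction field of $R/\mathfrak{p}$ over $\mathbb{F}_p$ can really be chosen among the images of a $\Z$-basis of the free part of $Q$, rather than among arbitrary ring elements or more general elements of $G_{ab}$. This relies on the combination of Krull-dimension-equals-transcendence-degree for finitely generated integral domains over a field with the observation that any algebra generating set contains a transcendence basis of the fraction field, both standard, but they must be combined carefully with the specifics of the group ring $\mathbb{F}_p[Q]$ so that the resulting $\Gamma$ is a direct summand of $Q$ (and not just a finite-index subgroup of such a summand). Once this is settled, the identification of the constructed subgroup with $\mathbb{F}_p \wr \Z^k$ via the splitting, and the appeal to Erschler together with subgroup monotonicity of return probability, are routine.
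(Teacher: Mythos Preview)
Your upper bound is correct and is essentially the paper's own approach: the paper packages the construction of a $(\Z/p\Z)\wr\Z^k$ subgroup into Proposition~\ref{prop-split case}, which in turn rests on the transcendental-monomial argument of Proposition~\ref{prop-tr e}. Your worry about choosing the $q_i$ from a $\Z$-basis of the free part of $Q$ is unnecessary: it suffices that the $q_i$ be \emph{monomials} of $Q$ (i.e.\ group elements), since algebraic independence of their images in $R/\mathfrak p$ already forces $\langle q_1,\dots,q_k\rangle\cong\Z^k$, and together with the splitting this yields the wreath product.

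The lower bound, however, is circular. In the paper, Theorem~\ref{thm-lower bound torsion} is \emph{deduced from} Proposition~\ref{prop-split torsion}: one first embeds a general $G$ into a split group of the same Krull dimension (Theorem~\ref{thm-embedding}) and then applies Proposition~\ref{prop-split torsion} to that split group. So invoking Theorem~\ref{thm-lower bound torsion} to prove the lower bound in Proposition~\ref{prop-split torsion} assumes what is to be shown. The substantive content you are skipping is precisely the construction of F{\o}lner couples adapted to $\mathcal V(m)=C\exp(Cm^k)$ for a split group $M\rtimes\Z^d$ with $M$ torsion of Krull dimension $k$: the paper does this by induction along a prime filtration $0=M_0<M_1<\dots<M_n=M$ (Proposition~\ref{prop-dec M}), handling each cyclic factor $\Z Q/\PP_i$ via Noether normalization (Proposition~\ref{prop-folner seq ring p} and Lemma~\ref{lem-folner seq for ring p}) and lifting through the extensions (Lemma~\ref{lem-remonte fp t}), then concluding with Corollary~\ref{cor-lower bound}. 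None of this is recoverable from your argument as written.
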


To obtain these estimates, we reduce to a more tractable case. This is the object of the following embedding theorem. A more detailed statement can be find in subsection \ref{ssec-embedding}.

\begin{ThmIntro}\label{thm-kkv}
Every finitely generated metabelian group, which is the extension of an abelian group by a finitely generated abelian group $Q$, can be embedded inside a finitely generated split metabelian group $B \rtimes Q$, of the same Krull dimension, with $B$ abelian.
\end{ThmIntro}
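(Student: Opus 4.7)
The plan is to apply a Kaloujinine--Krasner-type embedding and then carve out of the wreath product's base a finitely generated $\Z Q$-submodule that has the same Krull dimension as $G$. First, I would fix a normalized set-theoretic section $\sigma : Q \to G$ with $\sigma(e) = e$, and let $c : Q \times Q \to M$ be the associated $2$-cocycle, where $M$ denotes the abelian kernel of the extension. The classical embedding $\iota : G \hookrightarrow M^{Q} \rtimes Q$ sends $g = \sigma(\bar g) h_g$ (with $h_g \in M$) to $(f_g, \bar g)$, where $f_g(q') = c(\bar g, \bar g^{-1} q') + (q'^{-1}\bar g) \cdot h_g$ and $M^Q$ carries the translation $\Z Q$-module structure. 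Choose a finite generating set $g_1, \dots, g_r$ of $G$ and define $B \subseteq M^Q$ to be the $\Z Q$-submodule generated by $f_{g_1}, \dots, f_{g_r}$. Then $B$ is a finitely generated $\Z Q$-module, and $\tilde G := B \rtimes Q$ is a finitely generated split metabelian group containing $\iota(G)$.

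Next, I would show $\krull(\tilde G) = \krull(G)$. The inequality $\krull(\tilde G) \geq \krull(G)$ follows from the monotonicity of Krull dimension under subgroup inclusions. For the reverse, observe that $[\tilde G, \tilde G] = I(Q) \cdot B$, where $I(Q) \subseteq \Z Q$ is the augmentation ideal, so by Proposition \ref{prop-Kd meta} it suffices to bound $\krull_{\Z Q}(I(Q) B)$. The canonical $\Z Q$-equivariant embedding $\varphi : M \hookrightarrow M^Q$, $\varphi(m)(q') = q'^{-1} m$, satisfies $\iota|_M = \varphi$, so $\varphi(M) \subseteq B$ is a $\Z Q$-submodule isomorphic to $M$. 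A direct commutator computation gives $\iota([g, g']) = \bigl((1 - \overline{g'}) f_g + (\overline{g} - 1) f_{g'},\, e\bigr)$, whence $\varphi([G,G]) \subseteq I(Q) B$, which yields the matching lower bound $\krull_{\Z Q}(I(Q) B) \geq \krull_{\Z Q}([G,G])$.

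The hard part will be the upper bound $\krull_{\Z Q}(I(Q) B) \leq \krull_{\Z Q}(M)$. The strategy is to decompose each generator $(q - 1) f_{g_i}$ of $I(Q) B$, via the $2$-cocycle identity, into a module part lying in $\varphi(M) \cong M$ and a cocycle-difference part; since $Q$ is finitely generated abelian, $c$ is determined up to coboundary by its finitely many values on pairs of generators of $Q$, so these contributions should realize $I(Q) B$ as a $\Z Q$-subquotient of an extension built from finitely many translates of $M$. The desired bound will then follow by the standard stability of Krull dimension under subquotients and short exact sequences, together with the equality $\krull_{\Z Q}(M) = \krull_{\Z G_{ab}}([G,G]) = \krull(G)$ when this dimension is positive, which uses that $M/[G,G]$ carries trivial $Q$-action and so has Krull dimension at most $1$. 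The edge cases $\krull(G) \in \{0, 1\}$ are handled directly from Proposition \ref{prop-Kd meta}, noting that $\tilde G$ is infinite if and only if $G$ is.
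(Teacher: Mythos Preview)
Your first step---the Kaloujnine--Krasner embedding followed by cutting down to the finitely generated $\Z Q$-submodule $B$ generated by the $f_{g_i}$---matches exactly what the paper does. The divergence comes afterwards, and this is where your proposal has a genuine gap.

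You attempt to prove directly that $\krull_{\Z Q}(I(Q)B)\le\krull_{\Z Q}(M)$ by manipulating the $2$-cocycle. The sketch you give does not go through as written. The cocycle identity
\[
q_1\cdot c(q_2,q_3)-c(q_1q_2,q_3)+c(q_1,q_2q_3)-c(q_1,q_2)=0
\]
relates the functions $\psi_q:=c(q,\cdot)$ via the \emph{diagonal} action of $Q$ on values in $M$, not via the \emph{shift} action that gives $M^Q$ (and hence $B$) its $\Z Q$-module structure. Thus applying $(q-1)$ in the shift sense to $\psi_{\bar g_i}$ does not produce an expression lying in $\varphi(M)$ plus a finite $\Z Q$-combination of other $\psi$'s; the two actions get tangled, and the assertion that ``these contributions realize $I(Q)B$ as a $\Z Q$-subquotient of an extension built from finitely many translates of $M$'' is not substantiated. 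Note also that ``translates of $M$'' is ambiguous: $\varphi(M)$ is already shift-invariant, so shifting it gives nothing new, while copies of $M$ under the diagonal action are not $\Z Q$-submodules of $M^Q$ for the shift structure. Without a correct bound here, there is no reason the group $B\rtimes Q$ you construct has Krull dimension $\le\krull(G)$, so your candidate target may simply fail to have the right dimension.

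The paper avoids this difficulty entirely by an extra step you are missing: after forming $B$, it uses Zorn's lemma to choose a submodule $C_0\subseteq B$ maximal with respect to $C_0\cap i(G)=\{0\}$, and replaces $B$ by $B_0=B/C_0$. The point of this quotient is that now \emph{every} nonzero submodule of $B_0$ meets $i(M)$; hence every associated prime of $B_0$ is already an associated prime of $M$, and combined with $\varphi(M)\subseteq B_0$ one gets $\mathrm{Ass}(B_0)=\mathrm{Ass}(M)$. Equality of Krull dimensions then follows from Proposition~\ref{prop-Krull dim associated prime}. This Zorn/associated-primes argument is short, robust, and in fact yields the stronger conclusion (same associated primes, not just same Krull dimension). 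I would recommend replacing your ``hard part'' by this quotienting step.
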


The other direction in the proof of Theorem \ref{thm-characterization} is a consequence of the existence of some special subgroups.

\begin{PropIntro}\label{prop-special}
Let $G$ be a metabelian group of Krull dimension $k$. 

If $k \geq 2$, then $G$ has a subgroup isomorphic to either $\Z \wr \Z$, or to $B_2^{(p)}$ for some prime $p$.
\end{PropIntro}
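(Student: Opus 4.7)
The plan is to exploit the structure of $M := [G, G]$ as a $\Z G_{ab}$-module. I first reduce to the case where $G$ is finitely generated: the Krull dimension of $M$ is realised on some finitely generated submodule, which sits inside a finitely generated subgroup of $G$ whose Krull dimension is then at least $2$. Under this hypothesis, $Q := G_{ab}$ is finitely generated abelian, $R := \Z Q$ is commutative Noetherian, and Hall's theorem ensures that $M$ is a Noetherian $R$-module. Since $\krull_R(M) = \krull(G) \geq 2$, there is an associated prime $P$ of $M$ with $\dim(R/P) \geq 2$, together with $n_0 \in M$ whose $R$-annihilator is exactly $P$. The cyclic submodule $N := R n_0 \cong R/P =: \bar R$ is then a commutative domain of Krull dimension $\geq 2$, finitely generated as a $\Z$-algebra by the images of $Q$.

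I now split on the characteristic of $\bar R$. If $\mathrm{char}(\bar R) = 0$, then $\trdeg_\Q(\bar R \otimes_\Z \Q) \geq 1$, and some element $t \in Q$ has image $\bar t \in \bar R$ transcendental over $\Q$; this forces $t$ to have infinite order in $Q$ (otherwise $\bar t$ would be a root of unity, hence algebraic). Fix any lift $\tilde t \in G$ of $t$. Under the identification $N \cong \bar R$ sending $n_0 \leftrightarrow 1$, the conjugate $\tilde t^i n_0 \tilde t^{-i}$ corresponds to $\bar t^i$, and by transcendence of $\bar t$ the family $\{\bar t^i\}_{i \in \Z}$ is $\Z$-linearly independent. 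The conjugates therefore freely generate a free abelian subgroup $A \leq N$ on which $\tilde t$ acts by shifting; since $\langle \tilde t \rangle \cap M = \{e\}$, I conclude that $\langle n_0, \tilde t \rangle = A \rtimes \langle \tilde t \rangle \cong \Z \wr \Z$.

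If $\mathrm{char}(\bar R) = p$ is prime, then $\trdeg_{\mathbb{F}_p}(\bar R) \geq 2$, and I can select $t_1, t_2 \in Q$ whose images $\bar t_1, \bar t_2 \in \bar R$ are algebraically independent over $\mathbb{F}_p$; algebraic independence forces $\langle t_1, t_2 \rangle \cong \Z^2$ in $Q$. Given commuting lifts $\tilde t_1, \tilde t_2 \in G$ of $t_1, t_2$, the same argument, now applied to the free $\mathbb{F}_p$-module spanned by the $\Z^2$-orbit of $n_0$ (whose generators are $\mathbb{F}_p$-linearly independent by algebraic independence of the $\bar t_i$), identifies $\langle n_0, \tilde t_1, \tilde t_2 \rangle$ with $\mathbb{F}_p \wr \Z^2 = B_2^{(p)}$. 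The main obstacle is exactly the production of such commuting lifts: an arbitrary pair satisfies $[\tilde t_1, \tilde t_2] = c$ for some $c \in M$ which need not vanish, and a modification by elements $m_1, m_2 \in M$ kills $c$ precisely when $c$ belongs to the submodule $(\bar t_1 - 1) M + (\bar t_2 - 1) M$, i.e.\ when the cohomology class $[c] \in H^2(\Z^2, M)$ vanishes. To resolve this cohomological obstruction I would invoke the split embedding of Theorem \ref{thm-kkv}: $G$ embeds into $\tilde G = B \rtimes Q$ of the same Krull dimension, where the section lifts in the $Q$-factor commute by construction and the copy of $B_2^{(p)}$ is readily built in $\tilde G$; transferring this copy back to a subgroup of $G$ itself is the delicate remaining step.
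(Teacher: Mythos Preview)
Your characteristic-zero branch is correct and matches the paper's: one transcendental monomial $\bar t$ over $\Q$, together with any lift of $t$ and the element $n_0$, yields $\Z\wr\Z$. The positive-characteristic branch, however, has a genuine gap. First, $\mathbb F_p\wr\Z^2$ is \emph{not} $B_2^{(p)}$: the wreath product is the \emph{split} extension of $\mathbb F_p[X^{\pm1},Y^{\pm1}]$ by $\Z^2$ (abelianization $\mathbb F_p\times\Z^2$), whereas $B_2^{(p)}$ is a non-split such extension (abelianization $\Z^2$). More importantly, your detour through the split overgroup $\tilde G$ of Theorem~\ref{thm-kkv} cannot be completed: the copy of $\mathbb F_p\wr\Z^2$ you build lives in $\tilde G$, not in $G$, and there is no mechanism to pull it back. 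Take $G=B_2^{(p)}$ itself: the obstruction class in $H^2(\Z^2,M)$ is nonzero, no commuting lifts exist in $G$, and your construction produces nothing inside $G$ even though the desired conclusion is tautological there.

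The paper's remedy (Lemma~\ref{lem-non split extension of Z2}) is to abandon the search for commuting lifts and exploit the universal property of $B_2^{(p)}$ instead. Take \emph{arbitrary} lifts $a,b\in G$ of $t_1,t_2$; together with the submodule $\mathbb F_p[X^{\pm1},Y^{\pm1}]\subset M$ they generate an extension of this ring by $\Z^2$ with the standard action. Inside it, choose non-commuting $a',b'$ projecting to generators of $\Z^2$ with $[a',b']$ a nonzero element of the ring. Then $[\langle a',b'\rangle,\langle a',b'\rangle]$ is the principal ideal $([a',b'])$, which is isomorphic as a $\Z[\Z^2]$-module to $\mathbb F_p[X^{\pm1},Y^{\pm1}]$ because the latter is a domain, and the abelianization of $\langle a',b'\rangle$ is exactly $\Z^2$. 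Being $2$-generated metabelian with derived subgroup of exponent $p$, the group $\langle a',b'\rangle$ is a quotient of $B_2^{(p)}$; but the quotient map is a morphism of short exact sequences that is an isomorphism on both kernel and cokernel, hence an isomorphism. In short, the non-commutativity you were trying to eliminate is precisely what manufactures $B_2^{(p)}$.
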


\begin{Orga} In section \ref{sec-Kd}, we recall the definition and some useful properties of the Krull dimension of a module or a group. Then, we establish basic facts in the metabelian case. Section \ref{sec-prelim} contains preliminary results: the adaptation to the torsion case of computations by Saloff-Coste and Zheng (\cite{SCZ14}) of the return probability of the free metabelian group, and the proof of Theorem \ref{thm-kkv}, which is a variation of an embedding theorem of Kaloujinine and Krasner. Proposition \ref{prop-special} is proved in section \ref{sec-specsub}. Finally, in section \ref{sec-folner}, we study sequences of F\o lner couples in order to apply the machinery of \cite{CGP01} for lower bounds on the return probability: we state in particular that such sequences go to a quotient and explain how to construct them for specific extensions.
\end{Orga}

\begin{Ack}
This paper contains some of the results of my doctoral dissertation. The question of links between the return probability of a metabelian group and the Krull dimension as a module of the derived subgroup of a metabelian group was suggested by Yves Cornulier and Romain Tessera. 
I am very grateful to them for many valuable discussions,  helpful suggestions and careful reading. 
I would also like to warmly thank Peter Kropholler for stimulating discussions, comments and careful reading, as well as Anna Erschler for her interest and an interesting conversation.
\end{Ack}

\section{Krull dimension}\label{sec-Kd}

Before defining the Krull dimension of a metabelian group, we recall the definition for a module. In this paper, all rings considered will be \textbf{commutative with one}.

\subsection{Krull dimension of a module}

We mention two equivalent definitions: the first one will be generalized later to define the Krull dimension of a group, while the second will be sometimes easier to handle.

\subsubsection{Krull dimension interpreted as the deviation of a poset}\label{subsubsec-Kd}

We follow \cite{MCR}.

\medskip

Let $A$ be a poset. If $a \leq b$, let $[a,b]  = \{ x \in A \mid a \leq x \leq b \}$. This is a subposet of $A$, called \emph{interval}, or \emph{factor}, from $a$ to $b$. A \emph{descending chain} is a chain $(a_i)_i$ of elements of $A$ such that $a_1 \geq a_2 \geq \dots$, and the intervals $[a_{i+1}, a_i]$ are the \emph{factors of the chain}. If every such descending chain is eventually constant, we say that $A$ satisfy the \emph{descending chain condition.}
Similarly, one defines the \emph{ascending chain condition}. A poset $A$ is \emph{trivial} if $a \leq b$ implies $a = b$, for all $a, b \in A$.

\begin{defn}
The \emph{deviation} of $A$, denoted $\dev A$, if it exists, is 
\begin{itemize}
\item $- \infty$, if $A$ is empty or trivial,
\item $0$, if $A$ is non-trivial and satisfies the descending chain condition,
\item and in general by induction: $\dev A$ is defined and equal to an ordinal $n$, provided $\dev A$ is not equal to $m$ for every $m < n$, and in any descending chain of $A$, all but finitely many factors have deviation defined and less than $n$.
\end{itemize}
\end{defn} 

Note that a poset may not have a deviation. A sufficient condition is

\begin{prop}[\cite{MCR}]\label{prop-existence of Kd}
Any poset with the ascending chain condition has a deviation.
\end{prop}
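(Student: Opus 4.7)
The plan is to argue by contradiction: assume $A$ satisfies ACC yet admits no deviation. Call an interval $[a,b] \subseteq A$ \emph{nice} if it has a deviation and \emph{bad} otherwise. The first step is to use ACC to locate a ``maximal source of badness.'' Recall that ACC on a poset is equivalent to the statement that every nonempty subset admits a maximal element. One may assume without loss of generality that $A$ has a greatest element (if not, formally adjoin one; this preserves ACC and the existence of a deviation), so the set
\begin{equation*}
S = \bigl\{ a \in A : [a,b] \text{ is bad for some } b \geq a \bigr\}
\end{equation*}
is nonempty. Pick a maximal element $a_1 \in S$ together with a witness $b_1 \geq a_1$ such that $[a_1, b_1]$ is bad.

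The second step is to exploit the maximality of $a_1$ to obtain a uniform bound. Any $c > a_1$ lies outside $S$, so every interval $[c, d]$ with $d \geq c$ is nice. Restricting to $c, d \in [a_1, b_1]$ with $c > a_1$, the ordinal
\begin{equation*}
\beta = \sup\bigl\{ \dev [c,d] : a_1 < c \leq d \leq b_1 \bigr\}
\end{equation*}
is well defined, since any set of ordinals has a supremum which is itself an ordinal.

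The third step is to derive a contradiction by showing that $\dev [a_1, b_1] \leq \beta + 1$. Take any descending chain $b_1 = d_0 > d_1 > d_2 > \cdots$ in $[a_1, b_1]$. Either the chain reaches $a_1$ in finitely many steps, in which case the chain is finite and the condition ``all but finitely many factors have small deviation'' is vacuous; or every $d_i > a_1$, in which case each factor $[d_{i+1}, d_i]$ falls within the scope of $\beta$ and hence has deviation $\leq \beta < \beta+1$. In either case, all but finitely many factors of the chain have deviation defined and less than $\beta + 1$. By definition, this forces $\dev [a_1, b_1] \leq \beta + 1$, contradicting the badness of $[a_1, b_1]$.

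The main obstacle, conceptually, is the passage from ``each sub-interval strictly above $a_1$ has \emph{some} deviation'' to ``\emph{all} these deviations are bounded by a single ordinal.'' Once one recognizes that this follows for free from the set-theoretic fact that any set of ordinals has a well-defined supremum — there is no subtle cofinality or chain-length issue to resolve — the rest of the argument is essentially a tautology unwinding the definition of deviation together with the maximality extracted from ACC.
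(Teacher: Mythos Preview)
The paper does not supply its own proof of this proposition; it simply quotes the result from \cite{MCR}. Your argument is essentially correct and is in fact close to the standard one, but two points deserve tightening.

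First, the ``adjoin a greatest element'' manoeuvre is unnecessary, and as written it does not actually justify why $S$ is nonempty. The clean argument is direct: if \emph{every} interval $[a,b]$ in $A$ were nice, set $\gamma = \sup\{\dev[a,b] : a \le b \text{ in } A\}$ (a set of ordinals indexed by a subset of $A\times A$, hence with a supremum); then every factor of every descending chain in $A$ has deviation $\le \gamma$, forcing $\dev A \le \gamma+1$ and contradicting the assumption that $A$ is bad. So some interval is bad and $S\neq\varnothing$, with no need to modify $A$.

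Second, in verifying $\dev[a_1,b_1]\le\beta+1$ you must treat arbitrary descending chains $d_0 \ge d_1 \ge \cdots$ in $[a_1,b_1]$, not only strict chains starting at $b_1$. The dichotomy still works: if every $d_i > a_1$ then each factor $[d_{i+1},d_i]$ has its bottom strictly above $a_1$, hence deviation $\le\beta$; if some $d_k = a_1$ then the chain is constant from that point on, all subsequent factors are trivial, and the only possibly uncontrolled factor is the single one $[a_1,d_{k-1}]$. In either case all but at most one factor has deviation defined and $<\beta+1$, which is exactly what is required. With these two clarifications your proof is complete.
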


We may now define the Krull dimension of a module as the deviation of a natural associated poset.

\begin{defn}\label{defn-Kd1}
Let $M$ be a $R$-module. Denote by $\mathcal L_R(M)$ the poset of $R$-submodules of $M$.  
The \emph{Krull dimension of $M$ as a $R$-module} is defined as $$\krull_R (M) = \dev \mathcal L_R(M),$$ whenever it exists. Otherwise, $M$ \emph{does not have a Krull dimension}. 
We may forget the reference to the ring whenever it is clear from the context.
\end{defn}

\smallskip

\begin{rems}~
\begin{enumerate}
\item If $A$ is a ring, the Krull dimension of $A$ denotes the Krull dimension of the $A$-module $A$, written $\krull(A)$.
\item A module satisfying the descending chain condition is called \emph{artinian}. Hence, modules of dimension $0$ are just artinian modules.
\end{enumerate}
\end{rems}

\begin{lem}[\cite{MCR}]\label{prop-kdim mod max}
If $N$ is a submodule of $M$, then
\begin{equation*}
\krull (M) = \max \{ \krull (N), \krull (M/N) \}.
\end{equation*}
\end{lem}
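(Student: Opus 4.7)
The plan is to prove the two inequalities separately. For the easy direction $\krull(M) \ge \max\{\krull(N),\krull(M/N)\}$, I would observe that $\mathcal L_R(N)$ is naturally identified with the interval $[0,N]$ of $\mathcal L_R(M)$, and by the lattice correspondence theorem $\mathcal L_R(M/N)$ is identified with the interval $[N,M]$. A routine induction on $\dev A$ shows that the deviation of any interval of a poset $A$ is at most $\dev A$: every descending chain in the interval is a descending chain of $A$ with the same factors. Applying this to both intervals yields $\krull(N),\krull(M/N) \le \krull(M)$.

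For the reverse inequality, I would use transfinite induction on $\alpha := \max\{\krull(N),\krull(M/N)\}$. The base cases $\alpha = -\infty$ (forcing $M=0$) and $\alpha = 0$ (the classical fact that an extension of an artinian module by an artinian module is artinian) are standard. For the inductive step, take a descending chain $L_1 \ge L_2 \ge \cdots$ in $\mathcal L_R(M)$. The induced chains $L_i \cap N$ in $\mathcal L_R(N)$ and $(L_i+N)/N$ in $\mathcal L_R(M/N)$ each have all but finitely many factors of deviation $< \alpha$ by hypothesis. For any index $i$ where both induced factors are small, set $K := (L_{i+1} + L_i \cap N)/L_{i+1}$, a submodule of $L_i/L_{i+1}$. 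By the second and third isomorphism theorems,
$$K \cong (L_i \cap N)/(L_{i+1} \cap N), \qquad (L_i/L_{i+1})/K \cong (L_i+N)/(L_{i+1}+N),$$
so $\krull(K)$ and $\krull\bigl((L_i/L_{i+1})/K\bigr)$ are both $< \alpha$. Applying the induction hypothesis to the pair $K \subseteq L_i/L_{i+1}$ gives $\krull(L_i/L_{i+1}) < \alpha$, which is exactly the deviation of the factor $[L_{i+1},L_i]$ in $\mathcal L_R(M)$. Hence all but finitely many factors of the chain have deviation $< \alpha$, so $\krull(M) \le \alpha$.

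The creative step, and the main obstacle to overcome, is guessing the right submodule $K$ of $L_i/L_{i+1}$ so that both $K$ and the quotient by $K$ land inside the ``small'' intervals coming from the induced chains in $N$ and $M/N$. Once this choice is made, the only remaining technicality is the identity $L_i \cap (L_{i+1} + N) = L_{i+1} + L_i \cap N$ that underlies the second isomorphism: the containment $\supseteq$ is immediate, and for $\subseteq$ one writes $x = a + b$ with $a \in L_{i+1}$ and $b \in N$, then uses the chain inclusion $L_{i+1} \subseteq L_i$ (Dedekind's modular law) to conclude $b = x - a \in L_i \cap N$. Beyond this, everything is routine ordinal bookkeeping, using in particular that the maximum of two ordinals $< \alpha$ is still $< \alpha$.
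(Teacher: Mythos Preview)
Your proof is correct. The paper does not actually prove this lemma: it is simply attributed to \cite{MCR} with no argument given. However, the paper later proves the analogous statement for $G$-groups (Lemma~\ref{lem-kdim g gps}) by exactly the method you propose: induction on $q = \max$, intersecting and projecting a descending chain $(M_n)$ to get chains $I_n = M_n \cap M$ and $P_n = p(M_n)$, and then using the intermediate subgroup $S_n = M_{n+1} I_n$ (your $L_{i+1} + L_i\cap N$) so that $S_n/M_{n+1} \simeq I_n/I_{n+1}$ and $M_n/S_n \simeq P_n/P_{n+1}$. Your choice of $K$ and the modular-law identity you single out are precisely what the paper uses there, so the approaches coincide.
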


\smallskip

\subsubsection{Krull dimension interpreted as the dimension of a faithful ring}~

For commutative Noetherian rings, there is an equivalent way of defining of the Krull dimension in terms of length of chains of prime ideals. One can then derive an equivalent definition for the Krull dimension of a module over a commutative Noetherian ring. These equivalent definitions will sometimes turn out to be more tractable. References for this are \cite{eisenbud} and \cite{MCR}.

\begin{prop}[\cite{MCR} , Theorem 4.8]
Let $A$ be a commutative Noetherian ring. The Krull dimension of $A$, when finite, is equal to the supremum among the $r$ such that there exists a chain
\[ \PP_0 \subsetneq \PP_1 \subsetneq \dots \subsetneq \PP_r \]
of prime ideals in $A$. 
\end{prop}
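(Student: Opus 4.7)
The plan is to establish the two inequalities $\krull(A) \geq r$ and $\krull(A) \leq r$ separately, in each case by induction on $r$ and making systematic use of Lemma \ref{prop-kdim mod max}. Note that $\krull(A)$ is a well-defined ordinal by Proposition \ref{prop-existence of Kd}, since $A$ is Noetherian.

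For the lower bound, given a chain $\PP_0 \subsetneq \PP_1 \subsetneq \dots \subsetneq \PP_r$ of prime ideals, the idea is to produce an infinite descending chain of ideals of $A$ whose factors all have Krull dimension at least $r-1$. First, by Lemma \ref{prop-kdim mod max} we have $\krull(A) \geq \krull(A/\PP_0)$, so I may replace $A$ by $A/\PP_0$ and assume $A$ is a domain. Picking any $a \in \PP_1 \setminus \{0\}$, the chain $(a) \supsetneq (a^2) \supsetneq \cdots$ is strictly decreasing (as $A$ is a domain and $a$ is not a unit), and multiplication by $a^n$ yields an $A$-module isomorphism $(a^n)/(a^{n+1}) \cong A/(a)$. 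The quotient $A/(a)$ is a Noetherian commutative ring carrying the prime chain $\PP_1/(a) \subsetneq \cdots \subsetneq \PP_r/(a)$ of length $r-1$, so by induction $\krull(A/(a)) \geq r-1$. Hence every factor of our descending chain has deviation at least $r-1$, which by the definition of deviation forces $\krull(A) \geq r$.

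For the upper bound, I would induct on the prime-chain dimension $r$ of $A$, assumed finite. The base case $r = 0$ reduces to the classical fact that a commutative Noetherian ring in which every prime ideal is maximal is artinian, hence of Krull dimension $0$. For the inductive step, every finitely generated module over a commutative Noetherian ring admits a finite prime filtration with successive quotients of the form $A/\PP$, and iterated application of Lemma \ref{prop-kdim mod max} gives $\krull(A) = \max_i \krull(A/\PP_i)$; since each $A/\PP_i$ has prime-chain dimension at most $r$, I may restrict to the case where $A$ is a domain with prime-chain dimension $r$. Given any descending chain $I_1 \supsetneq I_2 \supsetneq \cdots$ of ideals, $I_{n+1}$ is nonzero for all but finitely many $n$. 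Any prime of $A/I_{n+1}$ corresponds to a nonzero prime of $A$, so prepending $0$ extends a prime chain in $A/I_{n+1}$ to a strictly longer one in $A$; hence the prime-chain dimension of $A/I_{n+1}$ is at most $r-1$. By induction, $\krull(A/I_{n+1}) \leq r-1$, and since $I_n/I_{n+1}$ is a submodule of $A/I_{n+1}$, Lemma \ref{prop-kdim mod max} gives the same bound for almost all factors of our chain, so $\krull(A) \leq r$.

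The main difficulty I anticipate is the base case of the upper bound, which relies on the nontrivial theorem that a Noetherian commutative ring of prime-chain dimension zero is artinian; the remainder is inductive bookkeeping combined with the standard existence of prime filtrations for finitely generated modules over a Noetherian ring.
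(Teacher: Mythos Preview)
The paper does not supply its own proof of this proposition: it is quoted directly from \cite{MCR} (Theorem~4.8) as a background fact, so there is nothing in the paper to compare your argument against.

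Your argument is correct in substance and is the standard one. Two small points of precision are worth tightening. First, in the upper bound you phrase the induction as being on ``the prime-chain dimension $r$ of $A$'' and then reduce to domains $A/\PP_i$; but these quotients may have prime-chain dimension strictly less than $r$, so the inductive statement should really be ``if the prime-chain dimension is at most $r$ then $\krull(A)\le r$'', which makes the reduction clean. Second, when you take a descending chain $I_1\supsetneq I_2\supsetneq\cdots$ in the domain case, the definition of deviation requires you to handle \emph{all} descending chains, not only strict ones; the only extra case is when some $I_{n+1}=0$, and then $I_j=0$ for all $j>n$, so this contributes at most one factor that is not covered by your argument --- still only finitely many, so the conclusion stands. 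With these cosmetic adjustments your proof is complete; the base case you flag (Noetherian with every prime maximal implies artinian) is indeed the only genuinely nontrivial input.
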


\begin{rem}
For simplicity, the equivalent definition just stated is for \emph{finite} Krull dimension.
It is possible to refine it using induction so as to obtain an ordinal and get a general equivalent definition for Krull dimension (see \cite{MCR} 6.4).
\end{rem}

\smallskip

\begin{exs}\label{ex-krullpol}~
\begin{enumerate}
\item Any field has dimension $0$, and any principal ideal domain that is not a field has dimension $1$.
\item Polynomial rings over a Noetherian ring $A$  satisfy: 
\[ \krull(A[X_1, \dots, X_n]) = \krull(A) + n. \]
Considering invertible variables does not change the dimension:
\begin{equation*}
\krull(A[X_1^{\pm 1}, \dots, X_n^{\pm 1}]) = \krull(A) + n. 
\end{equation*}
\end{enumerate}
\end{exs}

Given fields $K\subset L$, the \emph{degree of transcendence} of $L$ over $K$ will be denoted by $\trdeg_K L$. For any module $M$ and any element $c$ in $M$, we denote by $M_c$ the localization of $M$ with respect to $\{c^n ; n \in \N\}$.

The second part of example \ref{ex-krullpol} actually generalizes : there is a connexion between Krull dimension and degree of transcendence. 
Before stating it, we recall Noether's normalization theorem (\cite{Noether1926}, see \cite{bosch} for this generalized version).

\begin{thm}[Noether's normalization theorem]\label{thm-noether}~

Let $A$ be an integral domain and let $A \subset R$ be an extension of $A$ such that $R$ is finitely generated as an $A$-algebra.

Then, there is a non-zero element $c$ in $A$ and elements $z_1, \dots, z_k$ in $R_c$ algebraically independent over $A_c$ such that $R_c$ is a finitely generated $A_c[z_1, \dots, z_k]$-module. 
\end{thm}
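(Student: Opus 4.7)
The plan is to reduce to the classical Noether normalization over a field, applied to $K = \text{Frac}(A)$, and then to descend back to $A$ by inverting a single well-chosen element $c$. First, I would consider the $K$-algebra $B = K \otimes_A R$, which is finitely generated over $K$ because $R$ is finitely generated over $A$. The classical field-theoretic version of Noether normalization (a standard result proved via Nagata's substitution trick) yields elements $t_1, \ldots, t_k \in B$ algebraically independent over $K$ such that $B$ is module-finite over $K[t_1, \ldots, t_k]$. After multiplying each $t_i$ by a suitable nonzero element of $A$—an operation that preserves algebraic independence and leaves the subring $K[t_1, \ldots, t_k]$ unchanged (hence preserves module-finiteness)—I may assume $t_i \in R$ and rename these elements $z_1, \ldots, z_k$.

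Next, I would write $R = A[s_1, \ldots, s_n]$ and exploit the fact that module-finiteness of $B$ over $K[z_1, \ldots, z_k]$ forces each generator $s_j$ to satisfy a monic polynomial relation with coefficients in $K[z_1, \ldots, z_k]$. Only finitely many elements of $K$ appear as coefficients across these $n$ relations, so a single common denominator $c \in A \setminus \{0\}$ brings all of them into $A_c[z_1, \ldots, z_k]$. In $R_c$ each $s_j$ is therefore integral over $A_c[z_1, \ldots, z_k]$; since integral elements form a subring and $R_c = A_c[s_1, \ldots, s_n]$, the full algebra $R_c$ is integral over $A_c[z_1, \ldots, z_k]$, and being a finitely generated algebra over that subring it is in fact module-finite. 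Algebraic independence of $z_1, \ldots, z_k$ over $A_c$ is inherited immediately from independence over the larger ring $K = \text{Frac}(A_c)$.

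The main obstacle I anticipate is the descent step: one must clear denominators simultaneously in the finitely many integral relations, but the finiteness makes a common multiple in $A \setminus \{0\}$ easy to produce, so this is really only bookkeeping. The genuine substance of the theorem sits in the field version of Noether normalization. A self-contained alternative would induct on the number of generators $n$ of $R$ as an $A$-algebra: in the presence of an algebraic dependence among the $s_j$, Nagata's trick $s_i \mapsto s_i - s_n^{m_i}$ for suitably spaced exponents $m_i$ produces a coordinate change after which the top coefficient in the new last variable is a single element $a \in A$; inverting $a$ exhibits one generator as integral over the $A_a$-subalgebra generated by the others, and the inductive hypothesis, applied to this smaller subalgebra, yields the desired $c$ after absorbing the localization at $a$.
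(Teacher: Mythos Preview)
The paper does not supply its own proof of this theorem: it is quoted as a classical result, with a reference to \cite{bosch} for the generalized version over a domain. So there is no in-paper argument to compare against; I can only assess your proposal on its own merits.

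Your strategy---pass to $K=\mathrm{Frac}(A)$, apply the field version of Noether normalization to $B=K\otimes_A R$, then clear finitely many denominators to descend to some $A_c$---is exactly the standard route and is correct in outline. One point in the descent deserves a little more care than you gave it. The monic relations $P_j(s_j)=0$ that you obtain hold in $B$, not a priori in $R_c$: what you actually know is that $P_j(s_j)\in R_c$ lies in the kernel of $R_c\to K\otimes_A R_c$, i.e.\ is $A$-torsion. Since the statement does not assume $R$ is a domain (only that $A$ is), this kernel need not vanish. The fix is immediate: for each $j$ choose a nonzero $d_j\in A$ with $d_jP_j(s_j)=0$ in $R_c$, and replace $c$ by $c\cdot d_1\cdots d_n$; after this further localization each $P_j(s_j)$ genuinely vanishes in $R_{c}$ and your integrality argument goes through. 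This is indeed ``only bookkeeping'', as you say, but it is bookkeeping you skipped.

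The alternative inductive argument via Nagata's substitution that you sketch at the end is also a valid route and is closer to how the result is often proved directly over a domain without first passing to the fraction field.
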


\begin{rem}
The subring $A_c[z_1, \dots, z_k]$ is isomorphic to a polynomial ring over $A_c$. Note that $k$ may be zero.

Moreover, if $A = K$ is a field, then $K_c = K$ for every non-zero element $c$  and the theorem says that every finitely generated $K$-algebra is a finitely generated module over a polynomial ring over $K$. The number $k$ of indeterminates is the Krull dimension of $R$. 
\end{rem}

Krull dimension behaves well with respect to extension, as stated in the next proposition.

\begin{prop}[\cite{eisenbud}]\label{prop-Kd trdeg}~
Let $A$ be an integral domain, $K$ its fraction field and $\mathbb P$ the prime subfield of $K$.
\begin{enumerate}
\item If $A$ is finitely generated as a $\mathbb P$-algebra, then,
\begin{equation*}
\krull (A ) = \trdeg_\mathbb P K.
\end{equation*}
\item If $A$ is finitely generated as a $\Z$-algebra, then
\begin{equation*}
\krull (A)  = \trdeg_\Q K + 1.
\end{equation*}
\end{enumerate}
\end{prop}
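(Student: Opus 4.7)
The natural approach is to derive both statements from Noether's normalization theorem (Theorem \ref{thm-noether}), together with the standard fact that an integral extension of integral domains preserves Krull dimension (a consequence of going-up and incomparability).

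For part (1), I would apply Noether normalization with base ring the prime field $\mathbb{P}$. Since $\mathbb{P}$ is a field, every nonzero element is a unit, so no localization is needed: one obtains $z_1, \ldots, z_k \in A$ algebraically independent over $\mathbb{P}$ such that $A$ is integral over $S := \mathbb{P}[z_1, \ldots, z_k]$. Passing to fraction fields, $K$ becomes algebraic over $\mathbb{P}(z_1, \ldots, z_k)$, giving $\trdeg_{\mathbb{P}} K = k$; meanwhile $\krull(S) = k$ by Example \ref{ex-krullpol}, and by integrality $\krull(A) = k$ as well.

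For part (2), one first observes that $A$ must have characteristic zero: otherwise $A$ would be a finitely generated $\mathbb{F}_p$-algebra and part (1) would apply (and the notation $\trdeg_\mathbb{Q}$ presupposes characteristic zero). So $\mathbb{Z} \subseteq A$, and Noether normalization applied to this extension produces a nonzero $c \in \mathbb{Z}$ and algebraically independent elements $z_1, \ldots, z_k \in A_c$ over $\mathbb{Z}[1/c]$ such that $A_c$ is integral over $R := \mathbb{Z}[1/c][z_1, \ldots, z_k]$. Localization preserves the fraction field, and $K$ is algebraic over $\mathbb{Q}(z_1, \ldots, z_k)$, so $\trdeg_\mathbb{Q} K = k$. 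Now $\mathbb{Z}[1/c]$ is a principal ideal domain that is not a field, so $\krull(\mathbb{Z}[1/c]) = 1$; hence $\krull(R) = k+1$ by Example \ref{ex-krullpol}, and by integrality $\krull(A_c) = k+1$. Since primes of $A_c$ pull back to primes of $A$ preserving strict inclusions, we obtain $\krull(A) \geq k+1$.

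For the reverse inequality, I would split any chain $\mathfrak{p}_0 \subsetneq \cdots \subsetneq \mathfrak{p}_r$ in $A$ at the first prime $\mathfrak{p}_j$ that contains $c$: the lower portion lifts to a chain in $A_c$ and has length at most $k+1$, while the upper portion projects to a chain in $A/\mathfrak{p}_j$, which is a finitely generated $\mathbb{F}_p$-algebra domain for some prime $p$ dividing $c$. By part (1), $\krull(A/\mathfrak{p}_j)$ equals $\trdeg_{\mathbb{F}_p} K(A/\mathfrak{p}_j)$. The main obstacle is to bound this transcendence degree by $k$: relations over $\mathbb{Z}$ between the $z_i$ and the other generators of $A$ may involve denominators that become zero modulo $p$, so the reductions of the $z_i$ do not obviously span $K(A/\mathfrak{p}_j)$ algebraically over $\mathbb{F}_p$. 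The cleanest remedy is to invoke the classical dimension inequality $\krull(A) \leq \krull(B) + \trdeg_B K(A)$ valid for a finitely generated $B$-algebra $A$ over a Noetherian domain $B$, applied with $B = \mathbb{Z}$; this immediately gives $\krull(A) \leq 1 + k$, closing the argument.
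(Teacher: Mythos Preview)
The paper does not supply a proof of this proposition at all: it is simply quoted from \cite{eisenbud}. So there is nothing to compare your argument against on the paper's side.

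Your argument is correct. Part (1) is the standard Noether-normalization proof and is clean. For part (2), your route via Noether normalization over $\Z$ gives the lower bound $\krull(A)\ge k+1$ immediately from $\krull(A_c)=k+1$; your abandoned chain-splitting idea for the upper bound indeed runs into the difficulty you describe, and falling back on the general dimension inequality $\krull(A)\le\krull(B)+\trdeg_{K(B)}K(A)$ for a domain $A$ finitely generated over a Noetherian domain $B$ is a legitimate fix. Note, though, that this inequality is itself a non-trivial result of the same flavour (it is essentially the ``dimension formula'' in \cite{eisenbud}), so in effect you are importing one black box to prove another of comparable depth. That is perfectly acceptable here, since the proposition is only quoted in the paper, but it is worth being aware that the two statements live at the same level.
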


This point of view gives another way to compute the Krull dimension of a module, as the dimension of a faithful ring.

\begin{prop}[\cite{MCR}]
Let $A$ be a commutative Noetherian ring and $M$ a finitely generated $A$-module.
Then, 
\begin{equation}\label{eq-Kd2}
\krull_A(M) = \krull \left(A/\Ann(M) \right),
\end{equation}
where $\Ann(M)$ denotes the annihilator of $M$, that is $\Ann(M) = \{a \in A \mid aM = 0 \}.$
\end{prop}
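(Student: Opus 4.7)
The plan is to reduce to the case where $M$ is a faithful module over the quotient ring, and then compare $\krull(M)$ with $\krull$ of that ring by sandwiching each between the Krull dimension of a finite direct sum/quotient of the other, invoking Lemma \ref{prop-kdim mod max} throughout.

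First I would set $I = \Ann(M)$ and $B = A/I$. Since $I$ kills $M$, the module $M$ carries a natural $B$-module structure, and the lattice $\mathcal{L}_A(M)$ of $A$-submodules of $M$ coincides with the lattice $\mathcal{L}_B(M)$ of $B$-submodules. Hence $\krull_A(M) = \krull_B(M)$, and we have reduced to showing that a faithful finitely generated module $M$ over a commutative Noetherian ring $B$ (with $\Ann_B(M) = 0$) satisfies $\krull_B(M) = \krull(B)$.

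Next I would establish the inequality $\krull_B(M) \leq \krull(B)$. Since $M$ is finitely generated as a $B$-module, there exists a surjection $B^n \twoheadrightarrow M$ for some $n$. By iterated application of Lemma \ref{prop-kdim mod max} to the short exact sequences $0 \to B \to B^k \to B^{k-1} \to 0$, one obtains $\krull(B^n) = \krull(B)$, and then applying the lemma to $M$ as a quotient of $B^n$ yields $\krull_B(M) \leq \krull(B^n) = \krull(B)$.

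For the reverse inequality $\krull(B) \leq \krull_B(M)$, I would use faithfulness. Choose generators $m_1, \ldots, m_n$ of $M$ as a $B$-module and consider the $B$-linear map $\varphi : B \to M^n$ given by $\varphi(b) = (bm_1, \ldots, bm_n)$. Its kernel is $\bigcap_i \Ann_B(m_i) = \Ann_B(M) = 0$, so $B$ embeds as a $B$-submodule of $M^n$. By Lemma \ref{prop-kdim mod max} again, $\krull(B) \leq \krull(M^n)$, and a second induction on $n$ using the exact sequences $0 \to M \to M^k \to M^{k-1} \to 0$ gives $\krull(M^n) = \krull_B(M)$. Combining the two inequalities yields the claim. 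The only mildly subtle point is producing the embedding $B \hookrightarrow M^n$; everything else is formal manipulation of the max formula.
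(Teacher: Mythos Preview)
The paper does not supply its own proof of this proposition: it is quoted from \cite{MCR} and stated without argument. Your proof is correct and is essentially the standard one. The reduction to a faithful module over $B=A/\Ann(M)$ is immediate, the inequality $\krull_B(M)\le\krull(B)$ via a surjection $B^n\twoheadrightarrow M$ and iterated use of Lemma~\ref{prop-kdim mod max} is fine, and the reverse inequality via the embedding $B\hookrightarrow M^n$, $b\mapsto(bm_1,\dots,bm_n)$, is the expected move; the identity $\bigcap_i\Ann_B(m_i)=\Ann_B(M)$ holds precisely because the $m_i$ generate $M$.
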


\subsubsection{Refinements}
It is useful to distinguish the contributions to Krull dimension that are made
by torsion modules and by torsion-free modules. 
Let $M$ be a module and $T(M)$ be the torsion subgroup of the group $M$. The group $T(M)$ also carries a $\Z Q$-module structure, hence has finite exponent, and is a direct factor. 

\begin{lem}[\cite{Hall59}]\label{lem-dec of M}
Let $M$ be a Noetherian module and  denote by $T(M)$ the torsion subgroup of $M$. Then,  $T(M)$ is a submodule of $M$ and there is a torsion-free subgroup $N$ of $M$ such that, as groups , $M = T(M) \oplus N$. 

In particular, the group $N$ is isomorphic to $M/T(M)$.
\end{lem}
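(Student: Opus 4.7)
The plan is to proceed in three steps, corresponding to the three assertions of the lemma.

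First, I verify that $T(M)$ is a submodule of $M$. This is immediate from the fact that the ring action commutes with multiplication by integers: if $m \in T(M)$ satisfies $nm = 0$ for some positive integer $n$, and $r$ lies in the base ring, then $n(rm) = r(nm) = 0$, so $rm$ is torsion as well.

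Second, I show that $T(M)$ has bounded exponent. Being a submodule of a Noetherian module, $T(M)$ is itself Noetherian and therefore finitely generated; pick module generators $t_1, \dots, t_k$ with additive orders $n_1, \dots, n_k$ and set $e = \mathrm{lcm}(n_1, \dots, n_k)$. For any $t = \sum_i r_i t_i$ in $T(M)$, the equality $et = \sum_i r_i (e t_i) = 0$ shows that $e \cdot T(M) = 0$.

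Finally, I invoke the classical Prüfer--Kulikov splitting theorem: a pure subgroup of bounded exponent of an abelian group is a direct summand. The subgroup $T(M)$ is always pure in $M$ as an abelian group, since if $t \in T(M)$ with $kt = 0$ is written $t = nm$ for some $m \in M$, then $(kn)m = kt = 0$, so $m$ is itself torsion and $t \in n T(M)$. Hence $M = T(M) \oplus N$ as abelian groups; the complement $N$ is necessarily torsion-free because $N \cap T(M) = 0$, and it is isomorphic to $M/T(M)$ via the restriction of the canonical projection.

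I expect no real obstacle here: the only nontrivial ingredient is the classical splitting theorem for bounded torsion subgroups of abelian groups, which is well known and used as a black box. The three assertions of the lemma then follow in order from the three steps above.
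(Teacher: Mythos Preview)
Your argument is correct and follows the standard route: finite generation of $T(M)$ as a module gives bounded exponent, and then the Pr\"ufer--Kulikov theorem (a pure subgroup of bounded exponent is a direct summand) yields the splitting as abelian groups. The paper itself gives no proof of this lemma---it is simply quoted from Hall \cite{Hall59}---so there is nothing to compare against beyond noting that your proof is the expected one.
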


\begin{defn}
The \emph{torsion-free Krull dimension} of $M$ is $\krull^0(M) = \krull(M/T(M))$ and the \emph{torsion Krull dimension} of $M$ is $\krull^t(M) = \krull(T(M))$.
\end{defn}

\begin{rem}
Proposition \ref{prop-kdim mod max} implies that
$\krull(M)  = \max\{\krull^0(M), \krull^t(M)\}.$
\end{rem}

\smallskip

\subsubsection{Krull dimension and associated prime ideals}~

We end this subsection collecting some lemmas about Krull dimension which will be needed hereafter.

\smallskip

\begin{defn}
Let $M$ be a $R$-module. 
A prime ideal $\PP$ of $R$ is an \emph{associated prime ideal of $M$} if it is the annihilator of some element in $M$.
We denote by Ass$_R(M)$ the set of associated primes of $M$.
\end{defn}

\smallskip

\begin{rem}
Equivalently, a prime $\PP$ is an associated prime ideal of $M$ if $R/\PP$ is a submodule of $M$. Note that all the associated prime ideals of $M$ contain the annihilator of $M$.
\end{rem}

\smallskip

\begin{lem}[\cite{eisenbud}]\label{lem-ann=p}
Let $R$ be a Noetherian ring. Let $M, N$ and $Q$ be finitely generated $R$-modules fitting together in a short exact sequence
\begin{equation*}
N \hookrightarrow M \overset{p}{\twoheadrightarrow} Q.
\end{equation*}
Assume $\krull(N) < d$ and that there exists an ideal $\PP$ such that, for every $x \in {Q-\{0\}}$, $\Ann(x) = \PP$, with $\krull(R/\PP) = d$.

Then $\PP$ is an associated prime ideal of $M$. 
In particular, $M$ has a submodule isomorphic to $R/\mathcal P$.
\end{lem}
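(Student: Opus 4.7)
The plan is to produce an element of $M$ whose annihilator is precisely $\PP$: this yields $\PP \in \mathrm{Ass}_R(M)$ and simultaneously an embedding $R/\PP \hookrightarrow M$. First, observe that the hypothesis already forces $\PP$ to be prime: given $a,b \in R$ with $ab \in \PP$ and $a \notin \PP$, pick any nonzero $x \in Q$; then $ax \neq 0$ since $a \notin \Ann(x) = \PP$, so $\Ann(ax) = \PP$ by hypothesis, and $(ab)x = 0$ gives $b \in \PP$.

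The main reduction is to replace $M$ by a more tractable submodule. Fix any nonzero $x \in Q$ and set $M' = p^{-1}(Rx) \subseteq M$. Since $\Ann(x) = \PP$, the submodule $Rx \subseteq Q$ is isomorphic to $R/\PP$, so one obtains a short exact sequence $N \hookrightarrow M' \twoheadrightarrow R/\PP$. Because every associated prime of $M'$ is also an associated prime of $M$, it suffices to exhibit $\PP \in \mathrm{Ass}_R(M')$.

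At this point I would invoke two standard facts about associated primes over a Noetherian ring (see \cite{eisenbud}): the inclusion $\mathrm{Ass}_R(M') \subseteq \mathrm{Ass}_R(N) \cup \mathrm{Ass}_R(M'/N) = \mathrm{Ass}_R(N) \cup \{\PP\}$ valid for any short exact sequence of finitely generated modules, and the identity $\krull(M') = \max\{\krull(R/\mathcal Q) : \mathcal Q \in \mathrm{Ass}_R(M')\}$ for any finitely generated $M'$. Lemma \ref{prop-kdim mod max} applied to $N \subseteq M'$ gives $\krull(M') = \max\{\krull(N), d\} = d$. If $\PP$ were not in $\mathrm{Ass}_R(M')$, every $\mathcal Q \in \mathrm{Ass}_R(M')$ would lie in $\mathrm{Ass}_R(N)$, whence $R/\mathcal Q \hookrightarrow N$ would yield $\krull(R/\mathcal Q) \leq \krull(N) < d$; taking the maximum then contradicts $\krull(M') = d$. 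Therefore $\PP \in \mathrm{Ass}_R(M')$, which completes the proof.

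The only genuine difficulty is recognizing that the hypothesis $\krull(N) < d$ is exactly what prevents the whole of $\mathrm{Ass}_R(M')$ from being absorbed into $\mathrm{Ass}_R(N)$; once the short exact sequence is set up and the dimensional comparison is visible, the conclusion is forced by the two standard commutative-algebra inputs above.
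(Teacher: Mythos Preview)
Your proof is correct but proceeds differently from the paper's. The paper is fully constructive: since $\krull(N) = \krull(R/I)$ with $I = \Ann(N)$, the hypothesis $\krull(N) < d = \krull(R/\PP)$ forces $I \not\subseteq \PP$; choosing $x \in I \setminus \PP$, one checks $N \cap xM = \{0\}$ (if $xm \in N$ then $p(xm) = x\,p(m) = 0$, so $p(m) = 0$ because $x \notin \PP$, whence $m \in N$ and $xm = 0$), so $xM \cong p(xM) \subseteq Q$ and any nonzero element of $xM$ has annihilator $\PP$. Your route instead imports the inclusion $\mathrm{Ass}(M') \subseteq \mathrm{Ass}(N) \cup \{\PP\}$ together with the identity $\krull(M') = \max_{\mathcal Q \in \mathrm{Ass}(M')} \krull(R/\mathcal Q)$ and reaches the conclusion by a dimension count. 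Both arguments exploit $\krull(N) < d$ in the same spirit, but the paper's is a two-line explicit construction, while yours is an existence argument relying on heavier standard facts; in the paper's own development the dimension-via-associated-primes statement (Proposition~\ref{prop-Krull dim associated prime}) is actually derived \emph{from} this lemma, so your appeal to \cite{eisenbud} for it is what keeps your argument non-circular. Incidentally, your reduction to $M' = p^{-1}(Rx)$ is harmless but unnecessary: the hypothesis already gives $\mathrm{Ass}(Q) = \{\PP\}$, so the same reasoning applies directly to $M$.
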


\begin{proof}
Denote by $I$ be the annihilator of $N$: $\krull(R/I) < d$ implies $I \nsubseteq \PP$.  Let $x \in I$, $x\notin \PP$. The module $xM$ has nontrivial image in $Q$ and one can check that $N \cap xM = \{0\}$. Hence $xM \simeq p(xM) \subset Q$ and elements of $xM$ have annihilator $\PP$.
\end{proof}

\smallskip

The next structure result will be very useful in the sequel. This shows that Noetherian modules are built up from modules isomorphic to $R/\mathcal P$ where $\mathcal P$ is prime.

\smallskip

\begin{prop}[\cite{eisenbud}]\label{prop-dec M}
Let $R$ be a Noetherian ring, $M$ be a finitely generated $R$-module. Then, there exist $M_0, M_1, \dots, M_n $ submodules of $M$ such that 
\begin{equation}\label{eq-dec M}
M = M_n > M_{n-1} > \dots > M_1 > M_0 = 0
\end{equation}
and $M_{i+1}/M_i \simeq R/\PP_i$, where $\PP_i$ is a prime ideal of $R$.
\end{prop}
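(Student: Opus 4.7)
The plan is to prove the proposition in two stages: first establish the existence of an associated prime (equivalently, a submodule isomorphic to $R/\PP$), then bootstrap to a full filtration via a maximality argument using the Noetherian hypothesis.

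First I would show: any non-zero finitely generated $R$-module $M$ contains a submodule of the form $R/\PP$ for some prime ideal $\PP$ of $R$. Consider the family $\mathcal{F} = \{ \Ann(x) : x \in M, \ x \neq 0 \}$ of ideals of $R$. Since $R$ is Noetherian and $\mathcal{F}$ is non-empty (as $M \neq 0$), it has a maximal element $\PP = \Ann(x_0)$. The key claim is that $\PP$ is prime. Suppose $ab \in \PP$ with $b \notin \PP$; then $bx_0 \neq 0$, and from $a(bx_0) = (ab)x_0 = 0$ we get $a \in \Ann(bx_0)$. Since $\Ann(bx_0) \supseteq \Ann(x_0) = \PP$, maximality of $\PP$ in $\mathcal{F}$ forces $\Ann(bx_0) = \PP$, hence $a \in \PP$. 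This proves $\PP$ is prime, and the cyclic submodule $Rx_0 \subset M$ is isomorphic to $R/\PP$.

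Next I would iterate this to build the filtration. Let $\mathcal{S}$ be the collection of submodules $N \subseteq M$ that admit a prime filtration as in \eqref{eq-dec M}. The zero module belongs to $\mathcal{S}$. Since $M$ is a finitely generated module over a Noetherian ring, $M$ is Noetherian, so the poset $\mathcal{S}$ has a maximal element $N$. If $N = M$ we are done; otherwise $M/N$ is a non-zero finitely generated $R$-module, so by the first step it contains a submodule isomorphic to $R/\PP$ for some prime $\PP$. Pulling back along the projection, this produces a submodule $N' \subseteq M$ with $N \subsetneq N'$ and $N'/N \simeq R/\PP$. Concatenating a filtration of $N$ with the single step $N \subset N'$ shows $N' \in \mathcal{S}$, contradicting the maximality of $N$. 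Hence $N = M$, which gives exactly the desired filtration.

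The only non-formal step is the primality of the maximal annihilator, which is the classical argument sketched above; the rest is Noetherian bookkeeping, so there is no real obstacle beyond that.
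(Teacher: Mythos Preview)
Your proof is correct and is precisely the standard argument; the paper does not give its own proof but cites this result from Eisenbud, where the same two-step strategy (maximal annihilators are prime, then Noetherian induction on the collection of filtrable submodules) is used.
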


As a consequence, and it is probably well-known, the Krull dimension of a Noetherian module only depends on its associated prime ideals.

\begin{prop}\label{prop-Krull dim associated prime}
Let $R$ be a Noetherian ring, $M$ be a finitely generated $R$-module.
Then, the Krull dimension of $M$ is attained by $R/\PP$ for some associated prime $\PP$ of $M$.
\end{prop}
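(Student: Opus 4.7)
The plan is to combine the structural filtration of Proposition \ref{prop-dec M} with Lemma \ref{lem-ann=p}, which is tailor-made for exactly this kind of argument. A subtlety to keep in mind is that the primes $\PP_i$ appearing in a filtration given by Proposition \ref{prop-dec M} need not themselves be associated primes of $M$; so some additional work is required to pick out one that is.

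First I would apply Proposition \ref{prop-dec M} to produce a filtration
\begin{equation*}
0 = M_0 \subsetneq M_1 \subsetneq \dots \subsetneq M_n = M
\end{equation*}
with $M_i/M_{i-1} \simeq R/\PP_i$ for prime ideals $\PP_i$ of $R$. Iterating Lemma \ref{prop-kdim mod max} along the filtration, one gets
\begin{equation*}
\krull(M) = \max_{1 \leq i \leq n} \krull(R/\PP_i).
\end{equation*}
Set $d = \krull(M)$ and let $j$ be the smallest index such that $\krull(R/\PP_j) = d$. By the choice of $j$ and another iteration of Lemma \ref{prop-kdim mod max}, $\krull(M_{j-1}) = \max_{i < j} \krull(R/\PP_i) < d$.

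Now apply Lemma \ref{lem-ann=p} to the short exact sequence
\begin{equation*}
M_{j-1} \hookrightarrow M_j \twoheadrightarrow R/\PP_j,
\end{equation*}
with the ideal $\PP_j$ in the role of $\PP$. The hypotheses are met: every nonzero element of $R/\PP_j$ has annihilator exactly $\PP_j$ (since $\PP_j$ is prime), $\krull(R/\PP_j) = d$, and $\krull(M_{j-1}) < d$. The lemma yields that $M_j$ has a submodule isomorphic to $R/\PP_j$, i.e.\ $\PP_j \in \mathrm{Ass}_R(M_j)$. Since $M_j \subseteq M$, this submodule sits inside $M$ as well, so $\PP_j \in \mathrm{Ass}_R(M)$. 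Together with $\krull(R/\PP_j) = d = \krull(M)$, this gives the desired associated prime. The only real content of the argument is the clever choice of the smallest index $j$ realizing the maximum dimension, which is what makes the hypothesis $\krull(M_{j-1}) < d$ of Lemma \ref{lem-ann=p} available.
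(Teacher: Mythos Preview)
Your argument is correct and is essentially identical to the paper's own proof: both use the filtration from Proposition \ref{prop-dec M}, compute $\krull(M)$ as the maximum of the $\krull(R/\PP_i)$ via Lemma \ref{prop-kdim mod max}, pick the minimal index realizing this maximum, and then apply Lemma \ref{lem-ann=p} to the corresponding short exact sequence. You have simply spelled out a few more of the verifications (that $\PP_j \in \mathrm{Ass}(M_j)$ implies $\PP_j \in \mathrm{Ass}(M)$, and that the hypotheses of Lemma \ref{lem-ann=p} hold) than the paper does.
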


\begin{proof}
Proposition \ref{prop-dec M} gives a decomposition of $M$ as a tower of rings of the form $R/\PP_i$, and Proposition \ref{prop-kdim mod max} implies that 
\[ \krull(M)  = \max_i\{ \krull( R/\PP_i)\}.\]
Look at the minimal $i$ such that the Krull dimension of $M_{i+1}/M_i$ is the Krull dimension of $M$. 
We can apply Lemma \ref{lem-ann=p} to the exact sequence
\begin{equation*}
M_i \hookrightarrow M_{i+1} \twoheadrightarrow M_{i+1}/M_i.
\end{equation*}
\end{proof}

\bigskip

\subsection{Krull dimension of a group}
\subsubsection{Definition}
Tushev defined in \cite{Tus03} the Krull dimension of a group analogously to that of a module. For a group $G$, let $\mathcal N(G) $ be the poset of all normal subgroups of $G$.

\begin{defn}[\cite{Tus03}]
Let $G$ be a group. We say that $G$ \emph{admits a Krull dimension} whenever the poset $\mathcal N(G)$ admits a deviation. In this case, we set
\begin{equation*}
\krull (G) = \dev \mathcal N(G).
\end{equation*}
Otherwise, $G$ \emph{does not admit a Krull dimension}.
\end{defn}

\begin{exs}\label{ex-Kd gps}~
\begin{enumerate}
\item A finite group has Krull dimension $0$.
\item $\Z$ has Krull dimension $1$. Indeed, a decreasing sequence of subgroups of $\Z$ is a sequence $(c_n \Z)_n$, where $c_n$ belongs to $\N$ and $c_n \mid c_{n+1}$. The factors are finite, except when $c_n$ is nonzero and $c_{n+1}$ is zero. This can only happen once.
\end{enumerate}
\end{exs}

\begin{rem}\label{rem-kd ab}
If the group is abelian, its Krull dimension coincide with its Krull dimension as a $\Z$-module.

Morevover, if $K, H$ are subgroups of an abelian group, the deviation of the factor $[H, K]$ is exactly the dimension of the quotient group $K/H$.
\end{rem}

\begin{lem}\label{lem- Krull dim ab}
A finitely generated abelian group has Krull dimension zero if it is finite, or one if it is infinite.
\end{lem}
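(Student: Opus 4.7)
The plan is to invoke Remark \ref{rem-kd ab}, which identifies the Krull dimension of an abelian group with its Krull dimension as a $\Z$-module, and then to combine the structure theorem for finitely generated abelian groups with the additivity Lemma \ref{prop-kdim mod max}. Concretely, I would write the finitely generated abelian group $G$ as $\Z^r \oplus T$ with $T$ its (finite) torsion part and $r \geq 0$, and split into the two cases of the statement.

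If $G$ is finite, then $r = 0$ and $G = T$; the lattice $\LL_\Z(G)$ of $\Z$-submodules of $G$ is finite and nontrivial, so it trivially satisfies the descending chain condition, which gives deviation $0$ by definition. Hence $\krull(G) = 0$.

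If $G$ is infinite, then $r \geq 1$. Applying Lemma \ref{prop-kdim mod max} to the short exact sequence $T \hookrightarrow G \twoheadrightarrow \Z^r$ reduces the computation to evaluating $\krull_\Z(\Z^r)$, since $T$ is finite and so has dimension $0$. The base case $\krull_\Z(\Z) = 1$ is exactly Example \ref{ex-Kd gps}(2). A straightforward induction on $r$, using Lemma \ref{prop-kdim mod max} applied to the exact sequence $\Z \hookrightarrow \Z^r \twoheadrightarrow \Z^{r-1}$, then yields $\krull_\Z(\Z^r) = 1$ for every $r \geq 1$, and the lemma follows.

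Given the tools already established, there is no genuine obstacle here; the proof is essentially bookkeeping. The one point worth verifying carefully is that deviation is compatible with finite direct sums of modules, but this is precisely the content of Lemma \ref{prop-kdim mod max}, so nothing new is needed.
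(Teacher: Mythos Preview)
Your argument is correct. The one cosmetic caveat is the trivial group: there the lattice of subgroups is a singleton, hence trivial in the sense of the deviation definition, so $\krull(\{0\}) = -\infty$ rather than $0$. But the paper's statement and proof share this imprecision, so it is not a gap peculiar to your write-up.

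Your route differs from the paper's. The paper first gives a bare-hands chain argument: in any chain $G_0 < \dots < G_m$ of subgroups of $\Z^d \times F$, pick in each infinite factor an element of infinite order modulo the previous term; these elements generate a free abelian subgroup, so at most $d$ factors can be infinite, which forces all but finitely many factors in any descending chain to have deviation $\le 0$. The paper then offers, as an alternative, the annihilator formula (\ref{eq-Kd2}): $\krull_\Z(G) = \krull(\Z/\Ann(G))$, which is $1$ or $0$ according as $\Ann(G)$ is zero or not. Your approach instead reduces everything to the single computation $\krull_\Z(\Z)=1$ via repeated use of Lemma~\ref{prop-kdim mod max}. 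This is the most modular of the three arguments and exploits tools already in place; the paper's first proof is more self-contained (it does not invoke Lemma~\ref{prop-kdim mod max}), while its second proof is the shortest once one is willing to use the faithful-ring description of Krull dimension.
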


\begin{proof}
Such a group $G$ is isomorphic to $\Z^d \times F$, for some integer $d$ and some finite group $F$.

Let $G_0 < ... < G_m$ be a series of subgroups of $G$ in which there are $n$ infinite factors.
For each infinite factor $G_j/G_{j-1}$ choose an element $x_j$ in $G_j$ which has
infinite order modulo $G_{j-1}$. Then these $x_j$ taken together generate a free abelian group of rank $n$ and hence $n \le d$.

One can derive an alternative proof from Remark \ref{rem-kd ab} above. By $(\ref{eq-Kd2} )$, the Krull dimension of $G$ is the Krull dimension of $\Z / \Ann(G)$, when $\Ann(G)$ denotes the annihilator of the $\Z$-module $G$. Therefore $\krull(G)$ is $1$ if the group is infinite, or $0$ if it is finite.
\end{proof}

A reformulation of Proposition \ref{prop-existence of Kd} yields: 

\begin{prop}
If $G$ satisfies the maximal condition on normal subgroups, then $G$ has a Krull dimension.
\end{prop}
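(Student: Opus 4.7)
The plan is essentially a one-line reduction to Proposition \ref{prop-existence of Kd}. Recall that, by definition, $G$ admits a Krull dimension precisely when the poset $\mathcal{N}(G)$ of normal subgroups of $G$ admits a deviation. So the task reduces to checking that $\mathcal{N}(G)$ satisfies the ascending chain condition as a poset, and then invoking Proposition \ref{prop-existence of Kd}.

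First I would unwind the hypothesis: the maximal condition on normal subgroups of $G$ states that every non-empty family of normal subgroups has a maximal element, which is well known to be equivalent to the statement that every ascending chain $N_1 \leq N_2 \leq \cdots$ of normal subgroups eventually stabilizes. In terms of the poset $\mathcal{N}(G)$ ordered by inclusion, this is exactly the ascending chain condition.

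Then I would simply quote Proposition \ref{prop-existence of Kd}, which asserts that any poset satisfying the ascending chain condition has a deviation. Applying it to $\mathcal{N}(G)$ gives that $\dev \mathcal{N}(G)$ exists, so by definition $\krull(G) = \dev \mathcal{N}(G)$ is defined and $G$ admits a Krull dimension.

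There is no real obstacle here; the proposition is truly a reformulation, and the only content is translating between the vocabulary of the maximal condition on normal subgroups and the ACC on the poset $\mathcal{N}(G)$. If anything, the only thing worth making explicit in the write-up is this equivalence between the two formulations of the chain condition, so that the reader sees that the hypotheses of Proposition \ref{prop-existence of Kd} are exactly met.
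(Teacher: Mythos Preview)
Your proposal is correct and matches the paper's approach exactly: the paper introduces this proposition with the phrase ``A reformulation of Proposition \ref{prop-existence of Kd} yields'' and gives no further proof, so your unwinding of the maximal condition into the ACC on $\mathcal{N}(G)$ followed by an appeal to Proposition \ref{prop-existence of Kd} is precisely what is intended.
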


This is not a necessary condition: the group $\Z[\frac{1}{p}]$ does admit a Krull dimension, equal to $1$.

\medskip

\subsubsection{Krull dimension of a $G$-group}~

To study the Krull dimension of a metabelian group and link it with the Krull dimension of certain submodules of the group, we need the following broader notion. Let $G, H$ be two groups. $H$ is said to be a \emph{$G$-group} if there is an action of $G$ on $H$ containing the inner automorphisms of $H$. If $H$ satisfies an exact sequence
$ \exseq{M}{H}{Q} $,
with abelian groups $M$ and $Q$, then the induced actions on $M$ and $Q$ endow them with a structure of $G$-groups.

\begin{defn}
Let $H$ be a $G$-group for some group $G$. Denote by  $\mathcal N_G(H)$ the subposet of subgroups of $H$ that are stable under the action of $G$. 
We say that $H$ \emph{admits a Krull dimension as a $G$-group} whenever the poset  $\mathcal N_G(H)$ admits a deviation. In this case, we set
\begin{equation*}
\krull_G (H) = \dev \mathcal N_G(H),
\end{equation*}
Otherwise, $H$ \emph{does not admit a Krull dimension as a $G$-group}.
\end{defn}

\smallskip

\begin{rems}~
\begin{enumerate}
\item The poset $\mathcal{N}_G(H)$ is a subposet of $\mathcal{N}(H)$, therefore $\krull_G(H) \leq \krull(H)$.
\item The Krull dimension of $G$ as a $G$-group for the conjugation action is indeed the Krull dimension of $G$. 

\item If $K, L$ are elements of $\mathcal{N}_G (H)$, with $K \subset L$, then 
\begin{equation*}
\dev [K, L] = \krull_G (L/K),
\end{equation*}
where $[K, L]$ is the the segment between $K$ and $L$ in $\mathcal{N}_G (H)$ on the left-hand side, and $L/K$ denotes the $G$-group for the induced action of $G$ on the right-hand side.

As a consequence, if $N$ is a subgroup of a $G$-group $H$, stable under the action of $G$, then $\krull_G (N) \leq \krull_G (H)$.

Moreover, if $Q$ is a quotient of a $G$-group $H$ by a $G$-stable subgroup, then the action of $G$ on $H$ induces a structure of $G$-group on $Q$ and again $\krull_G (H) \geq \krull_G (Q)$.
\end{enumerate}
\end{rems}

\begin{lem}\label{lem-kdim g gps}
Let 
\begin{equation*}
M \hookrightarrow H \twoheadrightarrow Q
\end{equation*}
be a sequence of $G$-groups.
Then, 
\begin{equation*}
\krull_G (H) = \max \{ \krull_G (M), \krull_G (Q)\}.
\end{equation*}
\end{lem}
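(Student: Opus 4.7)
The plan is to mirror the proof of the module-theoretic analogue (Lemma \ref{prop-kdim mod max}). The inequality $\krull_G(H) \geq \max\{\krull_G(M), \krull_G(Q)\}$ is immediate from the third remark following the definition of $\krull_G$ for $G$-groups: $M$ sits inside $H$ as a $G$-stable subgroup and $Q$ is a quotient of $H$ by one. For the reverse inequality, I would set $d = \max\{\krull_G(M), \krull_G(Q)\}$ and proceed by transfinite induction on $d$, with base cases $d=-\infty$ (all three groups trivial) and $d=0$ (a DCC argument using the key observation below).

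The main technical tool to set up would be an exact sequence of $G$-groups relating the factors of a descending chain in $H$ to its traces in $M$ and $Q$. Given a chain $H_0 \supseteq H_1 \supseteq \cdots$ in $\mathcal N_G(H)$, I would first record that each $H_i$ is in fact normal in $H$ because the $G$-action contains every inner automorphism of $H$; this ensures $H_i \cap M$ and $H_iM$ are again $G$-stable subgroups, and the induced sequences $(H_i\cap M)_i$ in $\mathcal N_G(M)$ and $(H_iM/M)_i$ in $\mathcal N_G(Q)$ are descending. I would then interpose $H_{i+1} \subseteq H_{i+1}(H_i \cap M) \subseteq H_i$ and check via the standard isomorphism theorems that
\begin{align*}
H_{i+1}(H_i \cap M)/H_{i+1} &\;\cong\; (H_i \cap M)/(H_{i+1}\cap M),\\
H_i/H_{i+1}(H_i \cap M) &\;\cong\; (H_iM/M)/(H_{i+1}M/M),
\end{align*}
as $G$-groups; the second identification rests on the fact that $H_i \cap H_{i+1}M = H_{i+1}(H_i \cap M)$, which in turn encodes the key observation that $H_i = H_{i+1}$ whenever the two induced chains do not move.

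The induction then runs as follows. Since $\krull_G(M), \krull_G(Q) \leq d$, for all sufficiently large $i$ both deviations $\dev[H_{i+1}\cap M, H_i\cap M]$ and $\dev[H_{i+1}M/M, H_iM/M]$ are strictly less than $d$; by the remark linking factors to $\krull_G$ of quotients, these are exactly the Krull dimensions of the two ends of the short exact sequence recorded above. Applying the inductive hypothesis at the smaller ordinal $d'' = \max$ of these two ordinals to that exact sequence yields $\krull_G(H_i/H_{i+1}) \leq d'' < d$, i.e.\ $\dev[H_{i+1}, H_i] < d$. Since almost all factors of the original chain have deviation $<d$, we conclude $\krull_G(H) \leq d$.

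The main obstacle I foresee is not conceptual but bookkeeping: verifying that $H_{i+1}(H_i \cap M)$ really is a $G$-stable subgroup, that the isomorphism-theorem identifications respect the $G$-actions (so that the short exact sequence lives in the category of $G$-groups), and that the transfinite induction correctly handles limit ordinals via the ``smaller ordinal'' trick above. Once these verifications are in place, the proof is a faithful translation of the module-theoretic argument into the $G$-group setting.
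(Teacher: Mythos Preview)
Your proposal is correct and follows essentially the same route as the paper: the paper also sets $q=\max\{\krull_G(M),\krull_G(Q)\}$, inducts on $q$, takes a descending chain $(M_n)_n$ in $\mathcal N_G(H)$, interposes $S_n=M_{n+1}(M_n\cap M)$, and uses the isomorphisms $S_n/M_{n+1}\simeq (M_n\cap M)/(M_{n+1}\cap M)$ and $M_n/S_n\simeq p(M_n)/p(M_{n+1})$ to apply the inductive hypothesis. Your additional remarks (normality of the $H_i$ via the inner-automorphism condition, and the Dedekind-type identity $H_i\cap H_{i+1}M=H_{i+1}(H_i\cap M)$) make explicit verifications that the paper leaves implicit.
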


\begin{proof}
The fact that $\krull_G (H) \geq \max\{\krull_G (M), \krull_G (Q) \}$ follows from the remarks.

To show the reverse inequality, we use induction on the ordinal 
\begin{equation*}
q = \max \{ \krull_G (M), \krull_G (Q)\}.
\end{equation*}
It holds if $q$ is $- \infty$ or $0$. Assume that the reverse inequality is true for any such extension where the corresponding maximum is striclty less than $q$.

Let $(M_n)_n$ be a decreasing sequence of $\mathcal{N}_G (H)$. We need to show that all but finitely many of the factors $[M_{n+1}, M_{n}]$ have deviation less than $q$. Equivalently, we need to show that all but finitely many of the $G$-groups $M_n / M_{n + 1}$ have dimension less than $q$. Denote by $p$ the projection of $H$ into $Q$ and set $I_n = M_n \cap H$ and $P_n = p(M_n)$: these are decreasing sequences of $\mathcal{N}_G(M)$ and $\mathcal{N}_G(Q)$ respectively.
By definition of $q$, all but finitely many of the factors of these sequences have deviation less than $q$.

To apply the induction hypothesis, consider $S_n = M_{n+1}I_n$ in $\mathcal{N}_G(H)$. We have 
\begin{equation*}
S_n / M_{n +1} \hookrightarrow M_n / M_{n + 1} \overset{p}{\twoheadrightarrow} M_n / S_n
\end{equation*}
and $M_n / S_n \simeq P_n / P_{n+1}$ and $S_n /M_{n+1} \simeq I_n / I_{n+1}$, hence they have dimension less than $q$: 
\begin{equation*}
\krull_G (M_n / M_{n +1}) \leq \max \left\{\krull_G (I_n / I_{n +1}), \krull_G (P_n / P_{n + 1})\right\} < q.
\end{equation*}
\end{proof}

\subsubsection{Krull dimension of a metabelian group}~

The derived series $(G^{(j)})_j$ of  a group $G$ is defined inductively by $G^{(0)} = G$ and $G^{(j+1)} = [G^{(j)}, G^{(j)}]$. The group $G$ is \emph{solvable} if $G^{(j)} = \{e\}$ for some $j$, and the smallest such $j$ is the \emph{derived length of $G$}.
Recall that a metabelian group $G$ is a solvable group with derived length $2$.
We will denote by $G_{ab}$ the abelianization $G/[G, G]$ of $G$.
The group $G$ fits into the exact sequence
\begin{equation*}
[G, G] \hookrightarrow G \twoheadrightarrow G_{ab}.
\end{equation*} 

\medskip

We now consider a general extension of one abelian group by another. Let $G$ be a metabelian group such that
\begin{equation*}
M \hookrightarrow G \twoheadrightarrow Q ,
\end{equation*} 
where $M$ and $Q$ are abelian groups. The group $G$ acts on $M$ by conjugation, and the action of an element $g$ in $G$ only depends on its projection in the quotient group $Q$. Hence, it induces an action from $Q$ on $M$, endowing $M$ with a structure of $\Z Q$-module. If $Q$ is finitely generated then $\mathbb ZQ$ is Noetherian and if $G$ is finitely generated
then both $\mathbb Z Q$ and the $\Z Q$-module $M$ are Noetherian. 

If $G$ is a finitely generated metabelian group, note that $Q$ is virtually $\Z^d$, for some $d$. Hence, the group $G$ has a finite index metabelian subgroup $G_1$ with torsion-free quotient $\Z^d$: 
\begin{equation}\label{eq-meta gp Zd}
 M \hookrightarrow G_1 \twoheadrightarrow \Z^d,
\end{equation} 
and the ring $\Z \Z^d$ identifies with $\Z[X_1^{\pm 1}, \dots, X_d^{\pm 1}]$.

\begin{prop}\label{prop-Kd meta}
Let $G$ be a metabelian group, admitting an exact sequence of the form
\begin{equation*}
M \hookrightarrow G \twoheadrightarrow Q,
\end{equation*}
where $M$ and $Q$ are abelian groups.
The Krull dimension of $M$ as a $G$-group for the conjugation action coincide with the Krull dimension of $M$ as a $\Z Q$-module. 

Then, if $\krull (M) > 0$, we have $\krull (G) = \krull (M)$. Otherwise, $\krull (G)$ is $0$ if the group is finite, $1$ if not.
\end{prop}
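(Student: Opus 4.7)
My plan is to reduce the result to Lemma \ref{lem-kdim g gps} by identifying the three flavors of Krull dimension in play. The first claim is poset-theoretic: since $M$ is abelian, conjugation by elements of $M$ is trivial on $M$, so the conjugation action of $G$ on $M$ factors through $Q = G/M$ and coincides with the $\mathbb{Z}Q$-module action. Consequently, the $G$-stable subgroups of $M$ are exactly the $\mathbb{Z}Q$-submodules: $\mathcal{N}_G(M) = \mathcal{L}_{\mathbb{Z}Q}(M)$. Taking deviations gives the first statement $\krull_G(M) = \krull_{\mathbb{Z}Q}(M)$.

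For the main equality, I apply Lemma \ref{lem-kdim g gps} to the exact sequence of $G$-groups $M \hookrightarrow G \twoheadrightarrow Q$, obtaining $\krull_G(G) = \max\{\krull_G(M), \krull_G(Q)\}$. On the left, $\krull_G(G) = \krull(G)$ because the conjugation-stable subgroups of $G$ are precisely the normal subgroups. On the right, since $Q$ is abelian we have $[G,G] \subseteq M$, so the induced action of $G$ on $Q$ is trivial; every subgroup of $Q$ is therefore $G$-stable, so $\krull_G(Q) = \krull(Q) = \krull_{\mathbb{Z}}(Q)$ by Remark \ref{rem-kd ab}. Combining these identifications yields
\[\krull(G) = \max\{\krull_{\mathbb{Z}Q}(M),\,\krull(Q)\}.\]

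For the dichotomy, I invoke the finite generation of $G$ (the setting of the paper), so that $Q$ is finitely generated abelian and $\krull(Q) \in \{0,1\}$ by Lemma \ref{lem- Krull dim ab}. If $\krull_{\mathbb{Z}Q}(M) \geq 1$, the maximum collapses to $\krull(M)$, as required. Otherwise $\krull_{\mathbb{Z}Q}(M) \leq 0$, so $M$ is artinian; being also Noetherian over the finitely generated $\mathbb{Z}$-algebra $\mathbb{Z}Q$, it has finite length, with simple factors of the form $\mathbb{Z}Q/\mathfrak{m}$ for maximal ideals $\mathfrak{m}$, and Zariski's lemma (for finitely generated $\mathbb{Z}$-algebras that are fields) forces each such residue field to be finite. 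Hence $M$ itself is finite, and the formula gives $\krull(G) = \krull(Q)$, which is $0$ if $G$ is finite and $1$ otherwise.

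The main obstacle is the bookkeeping needed to reconcile the three notions of Krull dimension (of $G$ as a group, of $M$ as a $G$-group, and of $M$ as a $\mathbb{Z}Q$-module); once Lemma \ref{lem-kdim g gps} is in hand, the rest is elementary, with the only subtle point being that finite generation is essential to deduce finiteness of $M$ from $\krull_{\mathbb{Z}Q}(M) = 0$ in the second case of the dichotomy.
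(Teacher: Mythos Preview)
Your proof is correct and follows essentially the same approach as the paper: apply Lemma~\ref{lem-kdim g gps} to the conjugation action of $G$ on itself, identify $\krull_G(Q)$ with $\krull(Q)$ via the trivial induced action, and invoke Lemma~\ref{lem- Krull dim ab}. Your write-up is in fact more complete than the paper's terse proof; in particular, you make explicit the finite-generation hypothesis and supply the argument (via finite length and Zariski's lemma) that $\krull_{\Z Q}(M)=0$ forces $M$ to be finite, a point the paper leaves implicit but which is needed to match ``$\krull(G)=0$'' with ``$G$ finite'' in the dichotomy.
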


\begin{proof}
This follows from Lemmas \ref{lem- Krull dim ab} and \ref{lem-kdim g gps} , applied to the action of $G$ on itself by conjugation. Note that $\krull_G (Q)$ is the Krull dimension of the group $Q$.
\end{proof}

As a consequence, the Krull dimension of a metabelian group $G$ is the Krull dimension of $[G, G]$ as a $\Z G_{ab}$-module, except when the former is zero and the group infinite: we retain especially
\begin{equation*}
\krull (G) = \krull_{\Z G_{ab}}([G, G]), \text{ if positive}.
\end{equation*}

In particular, if the dimension is at least $2$, we may use any exact sequence expressing $G$ as an extension of an abelian group by another one to compute the Krull dimension of $G$ as the Krull dimension of the module involved. 

\begin{rem}
An easy consequence of this proposition is that finitely generated metabelian groups have finite Krull dimension.

In general, a metabelian group $G$ admits a Krull dimension if and only if it has a finite series of normal subgroups each of whose factor meets the maximal or the minimal condition for $G$-invariant subgroups. Recall that a $G$-group $H$ is said to satisfy the \emph{maximal (minimal) condition for $G$-invariant subgroups} if every descending (ascending) chain of $G$-invariant subgroups eventually terminates.
\end{rem}

Up to passing to a finite index subgroup, we may consider exact sequences with torsion-free abelian quotient as \ref{eq-meta gp Zd}.

\begin{prop}\label{prop-fin ind}
Let $G$ be a finitely generated metabelian group. Consider an exact sequence
\begin{equation*}
M \hookrightarrow G \twoheadrightarrow Q,
\end{equation*}
with $M$ and $Q$ abelian.

There exists a subgroup $G'$ of finite index in $G$ such that
\begin{enumerate}
\item $\krull (G) = \krull (G')$.
\item $G'$ is an extension of an abelian group by a finitely generated free abelian group.
\end{enumerate}
\end{prop}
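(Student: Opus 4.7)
The plan is to construct $G'$ by pulling back along $\pi \colon G \twoheadrightarrow Q$ a torsion-free finite-index subgroup of $Q$, and then to verify that the Krull dimension is preserved.

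First, I would observe that $Q$, being a quotient of the finitely generated group $G$, is finitely generated abelian, so it splits as $Q = Q' \oplus T$ with $Q' \simeq \Z^d$ free abelian and $T$ finite. Setting $G' := \pi^{-1}(Q')$ gives a subgroup of index $|T|$ in $G$ fitting into the exact sequence
\begin{equation*}
M \hookrightarrow G' \twoheadrightarrow Q',
\end{equation*}
with $Q'$ finitely generated free abelian, which establishes (2).

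For (1), I would apply Lemma \ref{lem-kdim g gps} to the conjugation action of $G$ (respectively $G'$) on itself, obtaining
\begin{align*}
\krull(G) &= \max\{\krull_{\Z Q}(M), \krull(Q)\},\\
\krull(G') &= \max\{\krull_{\Z Q'}(M), \krull(Q')\},
\end{align*}
using that the conjugation action on $M$ factors through $Q$ (respectively $Q'$), so that $G$-invariant subgroups of $M$ coincide with $\Z Q$-submodules (and similarly for $G'$). Lemma \ref{lem- Krull dim ab} immediately identifies $\krull(Q) = \krull(Q')$, both being $1$ if $d \geq 1$ and $0$ otherwise.

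The remaining---and only nontrivial---step is to prove $\krull_{\Z Q}(M) = \krull_{\Z Q'}(M)$. Since $Q = Q' \oplus T$, the ring $\Z Q$ is free of rank $|T|$ over $\Z Q'$; furthermore $M$ is finitely generated over $\Z Q$ (since $G$ is finitely generated metabelian) and hence also over $\Z Q'$. By equation (\ref{eq-Kd2}), the equality reduces to comparing the Krull dimensions of the commutative Noetherian rings $R := \Z Q'/\Ann_{\Z Q'}(M)$ and $S := \Z Q/\Ann_{\Z Q}(M)$. Because $\Ann_{\Z Q'}(M) = \Ann_{\Z Q}(M) \cap \Z Q'$, the natural map $R \hookrightarrow S$ is injective and realizes $S$ as a finitely generated $R$-module, so the Cohen--Seidenberg going-up theorem yields $\krull(R) = \krull(S)$. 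This invariance of Krull dimension under finite commutative ring extensions is the main obstacle, though it is a classical fact; everything else is bookkeeping.
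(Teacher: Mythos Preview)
Your proof is correct and shares the paper's overall strategy: take $G' = \pi^{-1}(Q')$ for the free part $Q'$ of $Q$, then reduce the equality $\krull_{\Z Q}(M) = \krull_{\Z Q'}(M)$ to the classical fact that Krull dimension is invariant under finite commutative ring extensions. The only genuine difference is in how that reduction is made. The paper passes through a prime filtration $0 = M_0 \subset \dots \subset M_n = M$ with $M_{i+1}/M_i \simeq \Z Q/\PP_i$ (Proposition~\ref{prop-dec M}) and applies the finite-extension lemma to each pair $\Z Q/\PP_i \supset \Z Q'/(\PP_i \cap \Z Q')$, then reassembles via Lemma~\ref{prop-kdim mod max}. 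You instead invoke the annihilator description (\ref{eq-Kd2}) once, applying going-up directly to $\Z Q/\Ann_{\Z Q}(M)$ over $\Z Q'/\Ann_{\Z Q'}(M)$. Your route is a bit more economical and avoids the filtration entirely; the paper's version has the incidental benefit of exercising the decomposition machinery that is reused later in the article.
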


\begin{proof}
Let $p: G \rightarrow Q$ be the projection. $Q$ is a finitely generated abelian group: we may write $Q = Q' \times T$ where $T$ is a finite abelian group and $Q'$ is finitely generated free abelian. Take $G' = p^{-1}(Q')$, it has finite index in $G$ and 
\begin{equation*}
M \hookrightarrow G' \twoheadrightarrow Q'.
\end{equation*}
Hence, we are left to show that $\krull (G) = \krull (G')$. We show $\krull_{\Z Q} (M) = \krull_{\Z Q'} (M)$.
Take a decomposition of $M$ as a $\Z Q$-module, as given in \ref{eq-dec M}: 
$M = M_n \supset M_{n - 1} \supset \dots \supset M_1 \supset \{0\}$, where $M_{i + 1}/M_i = \Z Q/ \PP_i$, for some prime $\PP_i$. Then,
\begin{equation*}
\krull_{\Z Q} (M) = \max_i \{\krull (\Z Q/ \PP_i)\}. 
\end{equation*}
The following lemma implies that
\begin{equation*}
\krull (\Z Q/ \PP_i) = \krull_{\Z Q'/(\PP_i \cap Q')} (\Z Q/ \PP_i) = \krull_{\Z Q'} (\Z Q/ \PP_i)
\end{equation*}
and $\max_i \{\krull_{\Z Q'} (\Z Q/ \PP_i)\} = \krull_{\Z Q'} (M)$.
When this dimension in nonzero, Proposition \ref{prop-Kd meta} ensures that we are done. When it is zero, $\krull (G)$ is 0 if the group is finite, $1$ if not. As $G'$ has finite index, we have the same dichotomy.
\end{proof}

\begin{lem}
Let $A \supset B$ be two rings such that $A$ is finitely generated as a $B$-module. Then,
\begin{equation*}
\krull (A) = \krull (B) = \krull_B (A).
\end{equation*}
\end{lem}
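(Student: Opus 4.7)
The plan is to establish the two equalities $\krull_B(A)=\krull(B)$ and $\krull(A)=\krull(B)$ separately, exploiting the fact that the inclusion $B \hookrightarrow A$ sends $1$ to $1$.

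First I would tackle $\krull_B(A)=\krull(B)$. Because $1_A \in A$ and any element of $\Ann_B(A)$ must kill $1_A$, the annihilator $\Ann_B(A)$ equals $\{0\}$; in other words $A$ is a faithful $B$-module. Since $A$ is finitely generated as a $B$-module (and $B$ is commutative Noetherian in all intended applications, as the ambient rings here are quotients of a group ring of the form $\Z Q$ with $Q$ finitely generated abelian), the formula $(\ref{eq-Kd2})$ applies and yields
\[
\krull_B(A) \;=\; \krull\!\bigl(B/\Ann_B(A)\bigr) \;=\; \krull(B).
\]

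Next I would prove $\krull(A)=\krull(B)$. One inequality is formal: every ideal of $A$ is in particular a $B$-submodule of $A$, so $\mathcal L_A(A)$ is a subposet of $\mathcal L_B(A)$, and therefore
\[
\krull(A) \;=\; \krull_A(A) \;\leq\; \krull_B(A) \;=\; \krull(B).
\]
For the reverse inequality, I would use that finite generation as a $B$-module forces $A$ to be integral over $B$. Then the Cohen--Seidenberg going-up theorem lifts any strictly increasing chain of prime ideals $\mathcal P_0 \subsetneq \mathcal P_1 \subsetneq \cdots \subsetneq \mathcal P_r$ in $B$ to a strictly increasing chain of prime ideals in $A$, which gives $\krull(A) \geq \krull(B)$.

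There is no substantive obstacle: the proof is a direct application of the annihilator formula $(\ref{eq-Kd2})$ combined with standard facts about integral extensions. The only mild care needed is that the chain-of-primes characterisation of Krull dimension requires the Noetherian hypothesis on $B$, which is satisfied in every use of the lemma in this paper (in Proposition \ref{prop-fin ind}, $B$ is a quotient of $\Z Q'$ which is Noetherian by the Hilbert basis theorem, and $A$ is the corresponding quotient of $\Z Q$, a finitely generated $B$-module because $Q$ is a finite extension of $Q'$).
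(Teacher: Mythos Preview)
Your argument is correct. The paper's proof is organised slightly differently: it first quotes $\krull(A)=\krull(B)$ as a black box (Proposition~9.2 in Eisenbud), and then sandwiches $\krull_B(A)$ between these two equal quantities via the poset inclusion $\mathcal L_A(A)\subset\mathcal L_B(A)$ for the inequality $\krull_B(A)\geq\krull(A)$, and via the general fact that a finitely generated $B$-module has Krull dimension at most $\krull(B)$ for the other inequality. Your route is a little more explicit: you observe directly that $\Ann_B(A)=0$ because $1_A\in A$, so formula~$(\ref{eq-Kd2})$ gives $\krull_B(A)=\krull(B)$ in one stroke, and you then reprove $\krull(A)=\krull(B)$ by combining the same poset inclusion with going-up. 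Both arguments rely on the same circle of ideas (integral extensions and the annihilator description of module Krull dimension); your version trades a citation for an appeal to Cohen--Seidenberg, and makes the faithfulness of $A$ as a $B$-module the pivot. Your caveat about the Noetherian hypothesis is well placed: neither proof works without it, and in the paper's only application (Proposition~\ref{prop-fin ind}) it is satisfied.
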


\begin{proof}
Proposition $9.2$ of \cite{eisenbud} states that $\krull (A) = \krull (B)$. We compare this quantity with $\krull_B (A)$. First,
$\krull_B (A) = \dev \mathcal L_B(A) \geq \dev \mathcal L_A(A) = \krull (A)$.
On the other hand, 
$\krull_B (A) \leq \krull (B) = \krull (A).$
\end{proof}

Similarly to the case of modules, we have the following refinements. Proposition \ref{prop-fin ind} ensures that this is well-defined.

\begin{defn}
Let $G$ be a metabelian group, satisfying the exact sequence
\begin{equation*}
\exseq{M}{G}{Q}
\end{equation*}
for some abelian groups $M$ and $Q$ so that $\krull_{\Z Q}(M) >0$.
The \emph{torsion-free Krull dimension} of $G$ is $\krull^0(G) = \krull^0(M)$ and
the \emph{torsion Krull dimension} of $G$ is $\krull^t(G) = \krull^t(M)$.
\end{defn}

Again, if $\krull_{\Z Q}(M)$ is nonzero, we have
\begin{equation*}
\krull(G) = \max\{\krull^0(G), \krull^t(G)\}.
\end{equation*}

\subsection{Examples}

We study now three classes of examples. First, we consider small dimensional metabelian groups and study the rank of the torsion-free ones. Then, in the second and third paragraphs, we give the Krull dimension of some metabelian wreath products and of the free ($p$-)metabelian groups (see the definitions below). These two last classes will appear to be fundamental in the sequel.

\medskip

\subsubsection{Small dimensional metabelian groups}~

Finitely generated metabelian groups of Krull dimension $0$ are finite, as stated in Proposition \ref{prop-Kd meta}. When the dimension is $1$ and the group is torsion-free, we can also say something about the structure. Recall that a group has \emph{finite Prüfer rank} if there exists an integer $r$ such that any finitely generated subgroup can be generated by at most $r$ elements. The least such $r$ is the \emph{Prüfer rank} of the group. 

\begin{prop}\label{prop-dim 1 torsion-free}
Let $G$ be a finitely generated torsion-free metabelian group of dimension one. Then, it has finite Prüfer rank.
\end{prop}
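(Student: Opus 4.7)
Plan. I will reduce to studying the derived subgroup $M = [G,G]$, prove that $M$ embeds in a finite-dimensional $\Q$-vector space via the functor $-\otimes_{\Z}\Q$, and then conclude that $G$ has finite Prüfer rank. First I invoke Proposition~\ref{prop-fin ind} to replace $G$ by a finite-index subgroup $G'$ of the same Krull dimension fitting in an exact sequence $M \hookrightarrow G' \twoheadrightarrow Q$ with $Q = \Z^d$; finite Prüfer rank is preserved under finite-index extensions (a finitely generated $H \le G$ yields a finitely generated $H \cap G'$ of index at most $[G:G']$, controlling the number of generators of $H$), so the reduction is legitimate. Torsion-freeness passes to $M$, and by Proposition~\ref{prop-Kd meta} one has $\krull_{\Z Q}(M) \in \{0,1\}$.

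If $\krull_{\Z Q}(M)=0$, then $M$ is a Noetherian artinian $\Z Q$-module with simple factors $\Z Q/\mathfrak m$ for maximal ideals $\mathfrak m$; each such factor is a finitely generated $\Z$-algebra that is a field, hence a finite field, and so $M$ is finite. Torsion-freeness then forces $M=0$, and $G'=\Z^d$ clearly has finite Prüfer rank. Suppose henceforth $\krull_{\Z Q}(M)=1$. The central claim is that $\dim_\Q(M \otimes_\Z \Q) < \infty$. Granted this, $M$ embeds in $\Q^N$ with $N = \dim_\Q(M\otimes_\Z \Q)$ since $M$ is torsion-free, and any finitely generated subgroup of $\Q^N$ is free abelian of rank at most $N$, so $M$ has Prüfer rank at most $N$. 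Combined with the Prüfer rank $d$ of $\Z^d$, the standard closure of finite Prüfer rank under extensions in the soluble setting (a finitely generated metabelian group with derived subgroup and abelianization both of finite Prüfer rank has finite Prüfer rank) then gives the conclusion for $G'$, and hence for $G$.

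For the central claim I apply Proposition~\ref{prop-dec M} to obtain a finite filtration $0=M_0 \subset M_1 \subset \dots \subset M_n = M$ with factors $M_{i+1}/M_i \cong \Z Q/\mathcal P_i$ for primes $\mathcal P_i$ of $\Z Q$ of Krull dimension at most one. Since $-\otimes \Q$ is exact, it suffices to bound $\dim_\Q (\Z Q/\mathcal P_i)\otimes \Q$ for each $i$. If $\mathcal P_i \cap \Z = p\Z$ with $p>0$, the tensored factor vanishes. Otherwise $\mathcal P_i \cap \Z = 0$, and Proposition~\ref{prop-Kd trdeg}(2) says the fraction field $K_i$ of $\Z Q/\mathcal P_i$ has transcendence degree zero over $\Q$; being finitely generated as a field over $\Q$, $K_i$ is therefore a number field. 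The $\Q$-subalgebra of $K_i$ generated by $\Z Q/\mathcal P_i$ is then a finite-dimensional integral $\Q$-algebra, hence a subfield, and it equals $K_i$ because $\Z Q/\mathcal P_i$ generates $K_i$ as a field. Therefore $(\Z Q/\mathcal P_i)\otimes \Q \cong K_i$ has finite $\Q$-dimension $[K_i:\Q]$, and summing over the $n$ filtration steps yields the claim.

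The main obstacle is precisely this dichotomy between characteristic-zero and positive-characteristic primes $\mathcal P_i$: identifying the characteristic-zero Krull-one quotients with orders in number fields via Proposition~\ref{prop-Kd trdeg} is the crucial ingredient, after which the tensor-with-$\Q$ argument and the standard extension property for finite Prüfer rank of soluble groups are routine.
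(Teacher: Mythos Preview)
Your proof is correct, and the overall skeleton matches the paper's: reduce via Proposition~\ref{prop-fin ind} to an extension $\exseq{M}{G'}{\Z^d}$, use the prime filtration of Proposition~\ref{prop-dec M}, identify the characteristic-zero factors with orders in number fields via Proposition~\ref{prop-Kd trdeg}, and conclude by closure of finite Pr\"ufer rank under extensions.

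The genuine difference lies in how the positive-characteristic factors are handled. The paper argues by contradiction that no factor $M_{i+1}/M_i$ of characteristic $p$ can have Krull dimension $1$: if it did, Proposition~\ref{prop-tr e} (a forward reference) would produce a copy of $\mathbb{F}_p[X^{\pm 1}]$ inside it, and pulling back the transcendental generator to $M_{i+1}\subset M$ would contradict torsion-freeness. You sidestep this entirely by applying $-\otimes_\Z\Q$: the torsion factors vanish automatically, the characteristic-zero factors become number fields of finite $\Q$-dimension, and the torsion-freeness hypothesis is used only once, to embed $M$ into $M\otimes\Q\cong\Q^N$. This is cleaner and self-contained: it avoids the forward reference to Section~\ref{sec-specsub} and does not require knowing whether positive-characteristic dimension-$1$ factors actually occur in the filtration (they may well occur; you simply don't care). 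The paper's route, on the other hand, extracts the slightly stronger structural statement that every factor is either finite or an order in a number field, which is not needed here but is informative.

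One minor remark: your parenthetical about closure of finite Pr\"ufer rank under extensions is stated for ``the soluble setting'', but the property holds for arbitrary extensions (if $H\le G$ is generated by $g_1,\dots,g_n$, lift $\le r_2$ generators of its image in $G/N$, write each $g_i$ as a lift times an element of $H\cap N$, and replace those $n$ elements of $N$ by $\le r_1$ generators of the subgroup they generate). No solubility is needed, so you can drop that qualifier.
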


This Proposition is false if we do not assume that the group is torsion-free: we shall see later in Section \ref{sec-specsub} that a finitely generated metabelian group of dimension $1$ whose derived subgroup is torsion has a subgroup isomorphic to a lamplighter.

\begin{proof}
Let $G$ be a finitely generated torsion-free metabelian group of dimension $1$. By Proposition \ref{prop-fin ind}, up to passing to a finite index subgroup, we may assume that $G$ fits inside an exact sequence 
\begin{equation*}\label{eq-exseq dim 0}
\exseq{M}{G}{\Z^d},
\end{equation*}
for some integer $d$, and abelian group $M$.
The group $M$ is a Noetherian $\Z\Z^d$-module and, by Proposition \ref{prop-dec M}, it admits an increasing sequence of submodules $M_i$ whose factors have the form $\Z\Z^d /\PP$, for $\PP$ a prime ideal.

We first consider the case where $M$ is a ring. It has characteristic zero, and Proposition \ref{prop-Kd trdeg} implies that the transcendental degree of its fraction field over $\Q$ is zero, so $M$ is an algebraic number field.
Algebraic number fields have finite Prüfer rank. Hence, $M$ has finite Prüfer rank.  

In the general case, we look at a decomposition of $M$: the subquotients $M_{i+1}/M_i$ have dimension either zero or one. If the dimension is one, it is enough to show that the characteristic is not positive. By contradiction, suppose $M_{k+1}/M_k$ has positive characteristic $p$ and dimension $1$. $M_{k+1}/M_k \simeq \Z \Z^d/ \PP$, with $\PP$ prime, hence $p$ is prime . Then, the transcendental degree of the fraction field of $M_{k+1}/M_k$  is $1$, and Proposition \ref{prop-tr e} will imply that it contains $\mathbb F_p[X^{\pm1}]$. Pulling back the transcendental element $X$ in $M_{k+1}$: we still get a transcendental element, which contradicts the fact that $M$ is torsion-free of dimension $1$.

The property of being of finite Prüfer rank is stable under extension, and finitely generated abelian groups have finite Prüfer rank. Hence, as $M$ has finite Prüfer rank, $G$ has finite Prüfer rank as well. 
\end{proof}

\medskip

\subsubsection{Wreath products}~

Let $K$ and $H$ be countable groups. The wreath product $K \wr H$ with base group $H$ is the semi-direct product $(\oplus_{h \in H} K_h) \rtimes H$ where $K_h$ are copies of $K$ indexed by $H$, and $H$ acts by translation of the indices. More precisely, one may identify elements in $\oplus_{h \in H} K_h$ with finitely supported functions from $H$ to $K$. Thus, for any two elements $(f, h), (f, h')$ in the group, their multiplication is given by
\begin{equation*}
(f, h)(f', h') = (f f'(h^{-1} \bullet), hh').
\end{equation*}

Similarly, the unrestricted wreath product of $K$ by $H$, denoted by $K \wr \wr H$ is the semi-direct product $(\sum_H K_h) \rtimes H$.

The most classical example of a wreath product is the lamplighter group $(\Z/2\Z) \wr \Z$. This group is generated by two elements $(0, 1)$ and $(\delta_0, 0)$, where $\delta_0$ is the function from $\Z$ to $\Z/2\Z$ taking value $1$ in $0$, and vanishing elsewhere. This group can be described in the following way: imagine a bi-infinite pathway of lamps, one for each integer. Each one of those lamps can be on or off, and a lamplighter is walking along the line. An element of the group corresponds to a configuration for the lamps, with only finitely many of them lit, and a position for the lamplighter. The first generator $(0, 1)$ changes the position of the lamplighter while the second generator $(0, \delta_0)$ has him change the state of the lamp at his current position.
A configuration is a function from $\Z$ to $\Z/2\Z$ and  corresponds to an element in $\mathbb F_2[X, X^{-1}]$. This subgroup has dimension $1$ as a $\Z[X, X^{-1}]$-module. 

In other words, the lamplighter group $(\Z/2\Z) \wr \Z$  has an elementary abelian subgroup of dimension $1$ such that the quotient is $\Z$. Hence, its Krull dimension is $1$.

Other examples are :
\begin{enumerate}
\item $\krull(\Z \wr \Z^d) = d + 1$.
\item $\krull( F \wr \Z^d) = d$, where $F$ is a finite group.
\end{enumerate}

\medskip

\subsubsection{Free ($p$-)metabelian groups}~

Denote by $F_d$ the free group of rank $d$. Let $F_d'$ be its derived subgroup and $F_d''$ the second term of its derived series.

The \emph{free metabelian group of rank $d$} is 
$$B_d^{(p)} := F_d / F_d''$$ and the \emph{free $p$-metabelian group of rank $d$} is 
$$B_d^{(p)} := F_d / F_d''(F_d')^p.$$

$B_d^{(p)}$ is the freest metabelian group whose derived subgroup has exponent $p$ and is the quotient of the free metabelian group of rank $d$, $B_d$ by the $p$-powers of its commutators.

These two groups have abelianization $\Z^d$ and looking at the associated exact sequence, one may compute their dimensions:
\begin{equation*}
\krull(B_d) = d + 1, \text{ and }\krull(B_d^{(p)}) = d.
\end{equation*}

\section{Preliminaries}\label{sec-prelim}

\subsection{Generalities on the return probability}\label{ssec-return proba}

We review some known properties and known behaviours for $p_{2n}$ to provide a larger picture to the reader.

\subsubsection{Stability properties } (\cite{PSC00})
\begin{enumerate}
\item Let $H$ be a finitely generated subgroup of $G$. Then $p_{2n}^H \succsim p_{2n}^G$.
\item Let $Q$ be a quotient of $G$. Then $p_{2n}^Q \succsim p_{2n}^G$.
\end{enumerate}
If the index of the subgroup, respectively the kernel of the quotient, is finite, then the inequality is an  equivalence.

As a consequence, when studying the return probability of a finitely generated metabelian group of a given Krull dimension, we may assume that its admits an exact sequence such as \ref{eq-meta gp Zd}, up to passing to a finite index subgroup (see Proposition \ref{prop-fin ind}).

\subsubsection{Known behaviours: amenability and growth} The following results were mentionnend in the introduction.

\begin{enumerate}
\item $G$ has polynomial growth of degree $d$ iff $p_{2n}^G \sim n^\frac{-d}{2}$ (see \cite{Var} and \cite{HSC}).
\item $G$ is non-amenable iff $p_{2n}^G \sim \exp(-n)$ (\cite{Kesten59}).
\item If $G$ has exponential growth then $p_{2n}^G \precsim \exp(-n^\frac{1}{3})$ (\cite{HSC}).
Moreover, if $G$ is a discrete subgroup of a connected Lie group, there is an equivalence: $G$ is amenable with exponential growth iff $p_{2n}^G \sim \exp(-n^\frac{1}{3})$.

These three previous behaviours are the only possible ones in this case.
\end{enumerate}

\smallskip

In the introduction, we mentioned that outside the world of discrete subgroups of connected Lie groups, far more behaviours happen. Here are some other examples.

\subsubsection{Known behaviours: among solvable groups}
\begin{enumerate}
\item Lamplighter groups:
\begin{itemize}
\item  $\Z \wr \Z^d :p_{2n}^\Z \wr \Z^d \sim \exp(-n^\frac{d}{d+2}(\log n)^\frac{2}{d+2})$,
\item  $F \wr \Z^d: p_{2n}^F \wr \Z^d \sim \exp(-n^\frac{d}{d+2})$ (see \cite{Erschler06}).
\end{itemize}
\item Free solvable groups on $d$ generators :
\begin{itemize}
\item Free metabelian group $B_d$: $p_{2n}^{B_d} \sim \exp(-n^\frac{d}{d+2}(\log n)^\frac{2}{d+2})$,
\item Free $r$-solvable group($r> 2$) $B_{d, r}$: $p_{2n}^{B_{d,r}} \sim \exp(-n (\frac{\log_{[r-1]} n}{\log_{[r-2]}n})^\frac{2}{d})$ (\cite{SCZ14}). 
\end{itemize}
\end{enumerate}

\smallskip

\subsection{Return probability of $B_d^{(p)}$}~

In \cite{SCZ14}, Saloff-Coste and Zheng computed the asymptotic of the return probability of the free solvable group of rank $d$ (and, in particular, that of the free metabelian group of rank $d$), using a method based on the Magnus embedding.
In this part, we explain why it is possible to use their techniques to get the return probability of the free $p$-metabelian group, obtained by adding $p$-torsion to the derived subgroup. The only thing to do is to check that the Magnus embedding behaves well in this case: this was done by Bachmuth in \cite{Bach67}.

\bigskip

Recall that the Magnus embedding allows to embed groups of the form $F_d/[N, N]$, with $N$ a normal subgroup of $F_d$, into the wreath product $\Z^d \wr (F_d/N)$ (\cite{Magnus39}). 

In the case of $B_d$, the free metabelian group of rank $d$, this applies to $N = F_d'$. The resulting wreath product is $\Z^d \wr \Z^d$, that is,the semi-direct product $\oplus_{\Z^d} \Z^d \rtimes \Z^d$, where $\Z^d$ acts by shift on $\oplus_{\Z^d} \Z^d$.

The embedding takes the following form. 
Let $s_1, \dots, s_d$ denote the classical generators of $F_d$, and $a_1, \dots, a_d$ their images in the abelianization $\Z^d$. 

Consider matrices of the form
\begin{equation*}
\begin{pmatrix}
   b & m \\
   0 & 1 
\end{pmatrix}
\end{equation*}
with $b$ in $\Z^d$ and $m \in M$, the free $\Z(\Z^d)$-module of rank $d$ with basis $(e_i)_{i = 1, \dots, d}$. This is a matricial representation of $\Z^d \wr \Z^d$. Let $i$ be the extension to a homomorphism of 
\begin{equation*}
s_i \mapsto \begin{pmatrix}
   a_i & e_i \\
   0 & 1 
\end{pmatrix}.
\end{equation*}

Magnus proved that the kernel of this homomorphism is $[[F_d, F_d], [F_d, F_d]]$ (\cite{Magnus39}). Therefore, $i$ induces an embedding from $B_d$ to $\Z^d \wr \Z^d$.

Bachmuth studied this representation in the case of free $k$-metabelian groups, for any positive integer $k$. He proved that moding out by $k$ in the derived subgroup corresponds to moding out by $k$  in $M$, that is

\begin{prop}[\cite{Bach67}, Proposition 1]
The Magnus embedding induces an embedding of the free $k$-metabelian group of rank $d$, $B_d^{(k)}$ into $(\Z/k\Z)^d \wr \Z^d$. 
\end{prop}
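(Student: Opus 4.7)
The plan is to consider the composition
\[
\phi \;=\; \pi_k\circ i \;:\; F_d \;\longrightarrow\; (\Z/k\Z)^d \wr \Z^d,
\]
where $i$ is the Magnus embedding introduced above and $\pi_k$ is the natural map that reduces the coefficients of the base group $M:=(\Z[\Z^d])^d$ modulo $k$. Since $F_d''(F_d')^k$ is by definition the kernel of the projection $F_d\twoheadrightarrow B_d^{(k)}$, the proposition amounts to the identity $\ker\phi = F_d''\,(F_d')^k$.

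The inclusion $F_d''(F_d')^k \subseteq \ker\phi$ is immediate: Magnus's theorem gives $\ker i = F_d''$, and for any $v\in F_d'$ the image $i(v)$ already lies in the base group $M$ (its top-left entry, namely the abelianization of $v$, is trivial), so $i(v^k) = k\cdot i(v)\in kM$ is killed by $\pi_k$.

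For the converse, suppose $\phi(w)=1$, so that $i(w)\in kM$. The top-left entry of $i(w)$ is then trivial, forcing $w\in F_d'$; and, identifying $F_d'/F_d''$ with $M_0:=i(F_d')\subseteq M$ via Magnus, this places $i(w) \in M_0\cap kM$. The crux of the proof is the purity statement $M_0\cap kM = k\,M_0$. Granted this, write $i(w) = k\cdot i(v)$ for some $v\in F_d'$; since $i(v^k)=k\,i(v)$ and $i$ is injective on $F_d'/F_d''$, we conclude $w\equiv v^k\pmod{F_d''}$, hence $w\in F_d''(F_d')^k$, as required.

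The main obstacle is therefore this purity statement, equivalently the torsion-freeness of $M/M_0$. I would deduce it from an explicit identification $M/M_0 \simeq I$, where $I\subseteq\Z[\Z^d]$ is the augmentation ideal: the module $M$ is freely generated by $e_1,\dots,e_d$, and $M_0$ is generated as a submodule by the commutator images $i([s_i,s_j]) = (a_i-1)e_j - (a_j-1)e_i$, so the assignment $e_i\mapsto a_i-1$ is well-defined modulo $M_0$ and, by a direct check using Fox derivatives (or a Koszul-type computation), yields an isomorphism onto $I$. Since $I$ sits inside the free abelian group $\Z[\Z^d]$, it is torsion-free, so $M_0\cap kM = kM_0$ for every $k\geq 1$, which concludes the argument.
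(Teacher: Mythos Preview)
The paper does not give a proof of this proposition; it is simply quoted from Bachmuth \cite{Bach67}, so there is no in-paper argument to compare against.

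Your proof is correct and is essentially the standard one. The reduction to the purity statement $M_0\cap kM = kM_0$ is exactly the right move, and your identification $M/M_0 \simeq I$ (the augmentation ideal) via $e_i\mapsto a_i-1$ is the classical description of the Magnus relation module: the sequence
\[
\textstyle\bigwedge^2 (\Z[\Z^d])^d \;\longrightarrow\; (\Z[\Z^d])^d \;\longrightarrow\; \Z[\Z^d] \;\longrightarrow\; \Z \;\longrightarrow\; 0
\]
is the beginning of the Koszul resolution of $\Z$ over $\Z[\Z^d]$, and its exactness at the middle term (which is what you are invoking) holds because $a_1-1,\dots,a_d-1$ is a regular sequence in $\Z[\Z^d]$. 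The only mild gap is that the phrase ``by a direct check using Fox derivatives (or a Koszul-type computation)'' leaves this exactness implicit; it would be cleaner to name it explicitly, since that is where the actual content lies. Once $M/M_0\hookrightarrow \Z[\Z^d]$ is established, torsion-freeness and hence purity are immediate, and the rest of your argument goes through.
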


A lower bound for $p_{2n}^{B_d^{(k)}}$ follows directly from the comparison with $(\Z/k\Z)^d \wr \Z^d :$  \[p_{2n}^{B_d^{(k)}} \succsim \exp(-n^\frac{d}{d+2}).\]

In \cite{SCZ14}, Saloff-Coste and Zheng used the Magnus embedding to produce upper bounds for the return probability of groups of the form $F_d/[N, N]$, for some normal subgroup $N$ of $F_d$. 

They introduced the notion of exclusive pair in such a group $F_d/[N, N]$, made of a subgroup $\Gamma$ together with an element $\rho$ of the derived subgroup. The pair is designed so that the images of  $\Gamma$ and $\rho$ in the $\Z$-module $M$ should have minimal interaction (we refer to \cite{SCZ14}, $\mathsection 4$ for a precise definition). From this, they derived a comparison (\cite{SCZ14}, Theorem $4.13$): the return probability of $F_d/ [N, N]$ is smaller than the return probability of the subgroup $\Z \wr \bar{\Gamma}$ in $\Z^d \wr (F_d/N)$, where $\bar{\Gamma}$ is the image of $\Gamma$ in $F_d/N$.

In the case of the free metabelian group $B_d$, $\Gamma$ can be chosen so that $\bar{\Gamma}$ has finite index in $\Z^d$ (more generally, this is possible whenever $F_d/N$ is nilpotent).

Their techniques still apply to the free $k$-metabelian groups, if one consider $\Z/k\Z$-modules instead of $\Z$-modules and yield
\[p_{2n}^{B_d^{(k)}} \precsim p_{2n}^{\Z/k\Z \wr \Z^d} \precsim \exp(-n^\frac{d}{d+2}). \]

As a conclusion, we get

\begin{prop}\label{prop-return proba B_2^p}
The return probability of the free $k$-metabelian group of rank $d$ is equivalent to $\exp(-n^\frac{d}{d +2})$.
\end{prop}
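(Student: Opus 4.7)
The plan is simply to combine the two opposite inequalities that the preceding paragraphs have laid out; each of them already matches the rate $\exp(-n^{d/(d+2)})$, so the equivalence $\sim$ falls out directly.

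For the $\succsim$ direction I would invoke Bachmuth's embedding cited just above, namely $B_d^{(k)} \hookrightarrow (\Z/k\Z)^d \wr \Z^d$. Together with the subgroup stability property of the return probability recalled in \ref{ssec-return proba}, this yields
\begin{equation*}
p_{2n}^{B_d^{(k)}} \succsim p_{2n}^{(\Z/k\Z)^d \wr \Z^d}.
\end{equation*}
Since $(\Z/k\Z)^d$ is a nontrivial finite group, Erschler's formula for $F \wr \Z^d$ recalled in the introduction identifies the right-hand side with $\exp(-n^{d/(d+2)})$.

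For the $\precsim$ direction, the plan is to run the exclusive-pair argument of Saloff-Coste and Zheng \cite{SCZ14} in the mod-$k$ setting. In the free metabelian group $B_d$ they construct an exclusive pair $(\Gamma, \rho)$ whose image $\bar\Gamma$ has finite index in $\Z^d$; since the construction is purely combinatorial inside $F_d$ and involves no constraint on exponents of elements of $F_d'$, the same pair descends to $B_d^{(k)}$. Theorem 4.13 of \cite{SCZ14}, replicated with the coefficient module $M$ replaced by $M/kM$ so as to land in the correct ambient wreath product $(\Z/k\Z)^d \wr \Z^d$ provided by Bachmuth's proposition, then yields
\begin{equation*}
p_{2n}^{B_d^{(k)}} \precsim p_{2n}^{(\Z/k\Z) \wr \bar\Gamma}.
\end{equation*}
Because a finite-index subgroup of $\Z^d$ is itself isomorphic to $\Z^d$, the right-hand side equals $p_{2n}^{(\Z/k\Z) \wr \Z^d}$, and Erschler's formula closes the estimate.

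The one point requiring care is checking that the machinery of Section 4 of \cite{SCZ14}, and in particular the comparison in their Theorem 4.13, survives the transition from $\Z$-coefficients to $\Z/k\Z$-coefficients: their proof tracks the $M$-coordinate of the Magnus embedding and estimates its ``spread'' against the exclusive element $\rho$, and these estimates reduce verbatim modulo $k$ since reduction mod $k$ is a module homomorphism. I expect this to be a careful but routine transcription rather than a genuine obstacle, and combining the two matched bounds then yields the stated equivalence.
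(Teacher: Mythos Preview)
Your proposal is correct and mirrors the paper's own argument essentially line for line: the lower bound via Bachmuth's embedding into $(\Z/k\Z)^d \wr \Z^d$ combined with subgroup monotonicity, and the upper bound via the Saloff-Coste--Zheng exclusive-pair comparison transposed to $\Z/k\Z$-coefficients. The paper likewise flags the only nontrivial step as the verification that the machinery of \cite{SCZ14} carries over when $\Z$-modules are replaced by $\Z/k\Z$-modules, exactly as you note.
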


\subsection{An embedding theorem for metabelian groups that preserves the Krull dimension}\label{ssec-embedding}

In this section, we consider a finitely generated metabelian group $G$, extension of an abelian group $M$ by a finitely generated abelian group $Q$. Recall that $M$ also carries a structure of $\Z Q$-module.

A famous theorem of Kaloujinine and Krasner allows to embed $G$ in a split metabelian group. 

\begin{thm}[see \cite{KM}]
Any extension of a group $A$ by a group $B$ embeds in the non-restricted wreath product $A \wr \wr B$ of $A$ with $B$.
\end{thm}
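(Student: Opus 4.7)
The plan is to construct the embedding explicitly using a set-theoretic section of the projection, following the classical recipe. Denote by $\pi : G \twoheadrightarrow B$ the quotient map, so that $A = \ker(\pi)$, and choose a set-theoretic section $s : B \to G$ of $\pi$ normalized so that $s(1_B) = 1_G$. For each $g \in G$ and each $b \in B$, set
\[
f_g(b) := s(b)\, g\, s\!\left(\pi(g)^{-1} b\right)^{-1}.
\]
This element lies in $A$ because $\pi(f_g(b)) = b\, \pi(g)\, (\pi(g)^{-1} b)^{-1} = 1_B$ and $A = \ker(\pi)$. Note that normality of $A$ in $G$ is what makes the formula meaningful on the $A$ side without any a priori choice of action.

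I would then define $\phi : G \to A \wr \wr B$ by $\phi(g) := (f_g, \pi(g))$, where $f_g$ is viewed as an element of $\prod_{b \in B} A_b$. The core step is checking that $\phi$ is a homomorphism. Using the multiplication convention $(f, h)(f', h') = (f\cdot f'(h^{-1}\bullet),\, hh')$ recalled in the paper, the product $\phi(g)\phi(h)$ equals $\phi(gh)$ precisely when $f_{gh}(b) = f_g(b)\, f_h(\pi(g)^{-1} b)$ for all $b$. Substituting the definition, the middle factor $s(\pi(g)^{-1} b)^{-1}\cdot s(\pi(g)^{-1} b)$ telescopes, and one is left with $s(b)\, gh\, s(\pi(gh)^{-1}b)^{-1} = f_{gh}(b)$, as desired.

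For injectivity, if $\phi(g) = (\mathbf{1}, 1_B)$, then $\pi(g) = 1_B$ forces $g \in A$, and evaluating $f_g$ at $b = 1_B$ yields $f_g(1_B) = s(1_B)\, g\, s(1_B)^{-1} = g$ thanks to the normalization $s(1_B) = 1_G$; hence $g = 1_G$. This completes the embedding.

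The only real obstacle is a bookkeeping one: one must make coherent choices of left-versus-right cosets, of the handedness of the translation action of $B$ on $\prod_B A$, and of the precise formula for $f_g$ so as to match the wreath product multiplication convention adopted in the paper. The construction is canonical once these choices are fixed, and, crucially, it does not require the extension to split nor a genuine action of $B$ on $A$ beforehand: the information about how $B$ ``acts up to conjugation'' on $A$ is recorded inside the embedding itself via the section $s$.
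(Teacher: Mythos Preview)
Your proof is correct and follows the same classical Kaloujnine--Krasner construction that the paper recalls: choose a set-theoretic section, send $g$ to the pair $(f_g,\pi(g))$, and verify the homomorphism and injectivity directly. The paper merely states the formula $f_g(q)=s(\bar g q)^{-1}gs(q)$ without carrying out these checks; your variant $f_g(b)=s(b)\,g\,s(\pi(g)^{-1}b)^{-1}$ is the same idea under a different left/right convention, and in fact it is the one that matches the wreath-product multiplication $(f,h)(f',h')=(f\cdot f'(h^{-1}\bullet),hh')$ declared earlier in the paper.
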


We first recall their construction and then explain how to modify it so as to preserve the Krull dimension. 

\medskip

\subsubsection{The embedding of Kaloujinine and Krasner}
~

Fix a section $s : Q \rightarrow G$ of the group. The embedding $i: G \hookrightarrow M \wr\wr Q = \sum_Q M \rtimes Q$ is given by
\begin{equation*}
g \mapsto (f_g, \bar{g}).
\end{equation*}
where $\bar{g}$ is the image of $g$ in the quotient $Q$ and 
\begin{equation*}
f_g: \begin{cases}
Q \rightarrow M \\
q \mapsto s(gq)^{-1}gs(q).
\end{cases}
\end{equation*}
Note that the embedding does depend on $s$. This embedding commutes with the projection on $Q$, and its restriction to $M$ is $\Z Q$-equivariant. 
Let $S$ be a finite generating set for $G$ and $\pi : M \wr \wr Q \rightarrow \sum_Q M$ denote the canonical projection. Set $T = \pi(i(S))$, it is a finite subset of the base of the wreath product $M\wr\wr Q$. The subgroup $\hat G$, generated by $T$ and $Q$ in $M\wr\wr Q$, contains $i(G)$ and is the semi-direct product $B \rtimes Q$, where $B$ is the submodule of $\sum_Q M$ generated by $T$.

Therefore, $G$ embeds in the finitely generated split metabelian group $\hat G$.

\medskip

\subsubsection{Respecting the Krull dimension}~

We now explain how to modify this embedding so that the Krull dimension of the target group equals $\krull(G)$. 

Let \begin{equation*}
\mathcal{S} = \left\{ C \subset B \text{ submodule } \mid C\cap i(G) = \{0\} \right\}.
\end{equation*}

Zorn's lemma provides us a maximal element $C_0$ of $\mathcal S$. Set $B_0 = B/C_0$. By construction, $G$ embeds in $B_0 \rtimes Q$. We still denote by $i$ the composition of $i$ with the projection onto $B_0 \rtimes Q$. Note that every submodule of $B_0$ now intersects $i(G)$.

\textbf{Claim.} The associated prime ideals of $B_0$ and $M$ are the same. 

Indeed, Ass$(M) \subset$ Ass$(B_0)$, because  $M\subset B$. On the other hand, if $\PP$ is an associated prime of $B_0$, then $\Z Q/\PP$ is a submodule of $B_0$, hence does intersect $i(G)$: there exists $g$ in $G$ such that $i(g) \in \Z Q/\PP$. More precisely, $g$ projects trivially onto $Q$ and thus belongs to $M$.  It generates a submodule isomorphic to $\Z Q/\PP$ in $M$. 

Lemma \ref{prop-Krull dim associated prime} then implies that $M$ and $B_0$ have the same Krull dimension. Now (see Proposition \ref{prop-Kd meta}), if this dimension is positive, this is also the dimension of $G$ and $B_0 \rtimes Q$, and we are done. Otherwise, $M$ and $B_0$ have dimension $0$. If $G$ is infinite, so is $B_0 \rtimes Q$, and they have dimension $1$. The last case arises when $\krull (G) = \krull (M) = 0$, that is $G$ is finite: $B_0 \rtimes Q$ is finite as well, hence has dimension $0$. 

\smallskip

We just proved

\begin{thm}\label{thm-embedding}
Let $G$ be a metabelian group, given as an extension
\begin{equation*}
\exseq{M}{G}{Q},
\end{equation*}
with $M$ and $Q$ abelian.

Then, there exists an embedding of $G$ inside a split metabelian group $B \rtimes Q$, commuting with the projection on $Q$, such that $M$ and $B$ have the same associated prime ideals. In particular, $\krull(G) = \krull(B \rtimes Q)$.

Moreover, if $G$ is finitely generated, $B \rtimes Q$ can be chosen to be finitely generated as well.
\end{thm}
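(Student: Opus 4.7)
The approach is to start from the classical Kaloujinine--Krasner embedding $i \colon G \hookrightarrow M \wr\wr Q$ associated with a set-theoretic section $s \colon Q \to G$: explicitly, $i(g) = (f_g, \bar g)$ with $f_g(q) = s(\bar g q)^{-1} g\, s(q)$, which commutes with the projection onto $Q$ and restricts to the identity $\Z Q$-module map on $M$. The ambient group $M \wr\wr Q$ is far too large, and its Krull dimension need not match $\krull(G)$; the plan is to first cut down to a finitely generated split subgroup that still contains $i(G)$, and then to quotient by a maximal submodule disjoint from $i(G)$ in order to trim off any excess.

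For the first step, assuming $G$ is finitely generated, fix a finite generating set $S$ and let $T = \pi(i(S))$ where $\pi \colon M \wr\wr Q \twoheadrightarrow \sum_Q M$ is the canonical projection. The subgroup of $M \wr\wr Q$ generated by $T$ and $Q$ is a split extension $B \rtimes Q$, where $B$ is the $\Z Q$-submodule of $\sum_Q M$ generated by $T$; since $Q$ is finitely generated and $T$ is finite, $B$ is a finitely generated $\Z Q$-module and $B \rtimes Q$ is finitely generated. For the second step, consider $\mathcal S = \{\, C \subset B \text{ a $\Z Q$-submodule} \mid C \cap i(G) = \{0\}\,\}$, which is nonempty and stable under unions of chains, so Zorn's lemma furnishes a maximal element $C_0$. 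Setting $B_0 = B/C_0$, composing $i$ with the quotient yields an embedding $G \hookrightarrow B_0 \rtimes Q$ that still commutes with projection to $Q$, and the maximality of $C_0$ forces every nonzero $\Z Q$-submodule of $B_0$ to meet the image of $M$ nontrivially.

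The crux is to verify $\mathrm{Ass}(M) = \mathrm{Ass}(B_0)$. The inclusion $\mathrm{Ass}(M) \subset \mathrm{Ass}(B_0)$ is immediate from $M \hookrightarrow B_0$. Conversely, if $\mathcal P \in \mathrm{Ass}(B_0)$, then $\Z Q/\mathcal P$ embeds in $B_0$ and, by maximality, must meet $i(M)$ nontrivially; any nonzero element of the intersection, viewed in $M$, has annihilator exactly $\mathcal P$ because every nonzero element of $\Z Q/\mathcal P$ does. Hence $\mathcal P \in \mathrm{Ass}(M)$. Proposition \ref{prop-Krull dim associated prime} then yields $\krull(B_0) = \krull(M)$, and Proposition \ref{prop-Kd meta} upgrades this to $\krull(B_0 \rtimes Q) = \krull(G)$: when this common module dimension is positive it equals both group dimensions directly; when it vanishes, $B_0$ is finite (same associated primes as $M$), and $B_0 \rtimes Q$ is infinite if and only if $Q$ is, if and only if $G$ is.

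The main obstacle will be the direction $\mathrm{Ass}(B_0) \subset \mathrm{Ass}(M)$; the key point is that primality of $\mathcal P$ ensures every nonzero element of $\Z Q / \mathcal P$ has annihilator exactly $\mathcal P$, which is what allows the element pulled back into $M$ to genuinely realize $\mathcal P$ as an associated prime of $M$. The minor subtlety of treating the module dimension $0$ case separately in the conclusion is handled by the structure of Proposition \ref{prop-Kd meta} and the fact that the quotient $Q$ is shared by $G$ and $B_0 \rtimes Q$.
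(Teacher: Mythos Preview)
Your proposal is correct and follows essentially the same route as the paper: the Kaloujnine--Krasner embedding, passage to the finitely generated split subgroup $B \rtimes Q$, Zorn's lemma to find a maximal submodule $C_0 \subset B$ disjoint from $i(G)$, and the verification that $\mathrm{Ass}(M) = \mathrm{Ass}(B_0)$ via the fact that every nonzero submodule of $B_0$ meets $i(M)$. Your treatment of the $\krull(M)=0$ case is slightly more explicit than the paper's, but the argument is otherwise identical.
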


\section{Special subgroups of metabelian groups}\label{sec-specsub}

The purpose of this section is to prove 

\begin{prop}\label{prop-special subgroups}
Let $G$ be a metabelian group. Assume $G$ has Krull dimension at least $2$. Then, $G$ has a subgroup isomorphic to either $\Z \wr \Z$, or to $B_2^{(p)}$ for some prime $p$. 

The first option happens whenever $\krull^0(G) \geq 2$, and the consequence for the return probability of $G$ is 
\begin{equation*}
p_{2n}^G \precsim \exp\left(- n^\frac{1}{3}(\log n)^\frac{2}{3}\right).
\end{equation*} 

The second option happens whenever $\krull^t(G) \geq 2$, and yields
\begin{equation*}
p_{2n}^G \precsim \exp\left(- n^\frac{1}{2}\right).
\end{equation*} 
\end{prop}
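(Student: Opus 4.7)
My plan is to split according to the identity $\krull(G)=\max\{\krull^0(G),\krull^t(G)\}$, produce one of the two special subgroups in each case, and then derive the return-probability bounds from subgroup monotonicity of the return probability (recalled in subsection \ref{ssec-return proba}) combined with the lamplighter and free $p$-metabelian estimates. Both constructions start from Proposition \ref{prop-Krull dim associated prime}, applied to the relevant summand of $M$, to obtain an associated prime $\mathcal{P}$ of a submodule of $M$ realising the Krull dimension, and then from Proposition \ref{prop-Kd trdeg} to turn the resulting transcendence degree into algebraically independent elements of $Q$.

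\textbf{Torsion-free case, $\krull^0(G)\geq 2$.} Applying Proposition \ref{prop-Krull dim associated prime} to $M/T(M)$ yields an associated prime $\mathcal{P}$ with $\mathcal{P}\cap\Z=(0)$ and $\krull(\Z Q/\mathcal{P})\geq 2$; Proposition \ref{prop-Kd trdeg}(2) then gives $\trdeg_{\Q}\mathrm{Frac}(\Z Q/\mathcal{P})\geq 1$. Because $\Z Q/\mathcal{P}$ is generated as a $\Z$-algebra by the images of generators of $Q$, some $q\in Q$ has a transcendental image. Pulling a nonzero element of $\Z Q/\mathcal{P}\hookrightarrow M/T(M)$ back to $m\in M$ and fixing any lift $\tilde q\in G$ of $q$, the family $\{q^k m\}_{k\in\Z}$ is $\Z$-linearly independent (already in $M/T(M)$), so $\langle\tilde q,m\rangle\subseteq G$ is isomorphic to $(\bigoplus_k\Z)\rtimes\Z=\Z\wr\Z$ with $\tilde q$ acting by shift.

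\textbf{Torsion case, $\krull^t(G)\geq 2$.} Some $p$-primary component $T_p(M)$ has Krull dimension at least $2$, so an associated prime $\mathcal{P}$ of $T_p(M)$ with $\krull(\Z Q/\mathcal{P})\geq 2$ contains $p$; Proposition \ref{prop-Kd trdeg}(1) yields $\trdeg_{\mathbb{F}_p}\mathrm{Frac}(\Z Q/\mathcal{P})\geq 2$, so two generators $q_1,q_2$ of $Q$ have algebraically independent images. Writing $Q_0=\langle q_1,q_2\rangle\cong\Z^2$, the composition $\mathbb{F}_p[\Z^2]=\Z Q_0/p\Z Q_0\hookrightarrow\Z Q/\mathcal{P}\hookrightarrow M$ sends $1$ to an element $m_0\in T_p(M)$ with $\Ann_{\Z Q_0}(m_0)=p\Z Q_0$, so $\Z Q_0\cdot m_0$ is a free $\mathbb{F}_p[\Z^2]$-module of rank $1$. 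I then seek lifts $g_1,g_2\in G$ of $q_1,q_2$ with $\Ann_{\Z Q_0}([g_1,g_2])=p\Z Q_0$: granting this, the map $B_2^{(p)}\to\langle g_1,g_2\rangle$ sending the canonical generators to $g_1,g_2$ is injective on the abelianization (since $q_1,q_2$ are $\Z$-independent) and on the derived subgroup $(B_2^{(p)})'=\mathbb{F}_p[\Z^2]$ becomes the $\mathbb{F}_p[\Z^2]$-linear map $1\mapsto[g_1,g_2]$, which is injective by the annihilator hypothesis; hence $\langle g_1,g_2\rangle\cong B_2^{(p)}$.

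\textbf{Main obstacle and conclusion.} The main technical point is to realise the prescribed annihilator as that of an actual commutator: a direct calculation shows that as the lifts vary, the commutator ranges over the affine set $c_0+(q_1-1)M+(q_2-1)M$ with $c_0=[\tilde q_1,\tilde q_2]$, and $c_0$ need not cooperate. In the split case $G=M\rtimes Q$ one may take $c_0=0$, and then $g_1=(0,q_1)$ and $g_2=(m_0,q_2)$ give $[g_1,g_2]=(q_1-1)m_0$, whose annihilator is $p\Z Q_0$ because $q_1-1$ is a non-zero-divisor in the domain $\mathbb{F}_p[\Z^2]$. The general case reduces to this one via the embedding $G\hookrightarrow\hat G=B\rtimes Q$ of Theorem \ref{thm-embedding}, which preserves associated primes and hence the free cyclic submodule; one builds the copy of $B_2^{(p)}$ inside $\hat G$ and then verifies, by adjusting the two generators by elements of the image of $M$, that it already lies in the embedded image of $G$. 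Finally, subgroup monotonicity together with the lamplighter estimate $p_{2n}^{\Z\wr\Z}\sim\exp(-n^{1/3}(\log n)^{2/3})$ and Proposition \ref{prop-return proba B_2^p} (with $d=2$, giving $p_{2n}^{B_2^{(p)}}\sim\exp(-n^{1/2})$) yield the stated bounds on $p_{2n}^G$ in the two cases.
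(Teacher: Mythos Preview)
Your torsion-free case ($\krull^0(G)\ge2$) is correct and matches the paper's Corollary~\ref{cor-ZZ}. In the torsion case the paper argues differently: having located $N\cong\mathbb F_p[X^{\pm1},Y^{\pm1}]$ inside $M$ with $X,Y$ coming from elements $q_1,q_2\in Q$, it forms $H=\langle N,\tilde q_1,\tilde q_2\rangle\le G$ for arbitrary lifts and invokes Lemma~\ref{lem-non split extension of Z2}; the point of that lemma is that for \emph{any} non-commuting lifts $a,b$ in an extension of $\mathbb F_p[X^{\pm1},Y^{\pm1}]$ by $\Z^2$, the commutator $[a,b]$ is a nonzero element of the domain $\mathbb F_p[X^{\pm1},Y^{\pm1}]$, hence automatically has annihilator $(p)$, so no delicate choice of lifts is needed.

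Your reduction of the non-split torsion case to the split one via Theorem~\ref{thm-embedding} has a genuine gap. The embedding $i\colon G\hookrightarrow\hat G=B\rtimes Q$ goes the wrong way for this purpose: a subgroup of $\hat G$ built from the split section need not lie in $i(G)$. Over each $q\in Q$ the image $i(G)$ meets $B\times\{q\}$ in a single coset $b_q+i(M)$, and your proposed ``adjustment by elements of $i(M)$'' can only move within that coset; it cannot bring the split lift $(0,q)$ into $i(G)$ unless $b_q\in i(M)$ already. Concretely, take $G=(\mathbb F_p\wr\Z^2)\times_{\Z^2}H_3(\Z)$, the fiber product over $\Z^2$ of the lamplighter and the integer Heisenberg group: this is a finitely generated metabelian group with $\krull^t(G)=2$ and $\krull^0(G)=1$, yet for any $g_1,g_2\in G$ with $\Z$-independent images in $Q=\Z^2$ the commutator $[g_1,g_2]$ has nonzero component in $Z(H_3(\Z))\cong\Z$ (equal to the determinant of the images), so it is never $p$-torsion and $\langle g_1,g_2\rangle$ is never isomorphic to $B_2^{(p)}$. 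Thus your adjustment step cannot be completed in general; what is really needed, and what the paper's route via Lemma~\ref{lem-non split extension of Z2} uses, is that the commutator of the chosen lifts already lies in the submodule $N$.
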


\subsection{Looking for transcendental elements}\label{ssec-trans}

We first show that a group of dimension $k$ contains a maximal polynomial ring of the same dimension.

\begin{prop}\label{prop-max family trans e}
Let $G$ be a finitely generated metabelian group, satisfying
\begin{equation}\label{eq-exseq trans mod}
\exseq{M}{G}{Q},
\end{equation}
with $M, Q$ abelian. Assume that $G$ has Krull dimension $k \geq 1$.

Then, $M$ contains a ring isomorphic to  $\Z[X_1^{\pm 1}, \dots, X_{k-1}^{\pm 1}]$ or $\mathbb{F}_p [X_1^{\pm 1}, \dots, X_k^{\pm 1}]$, for some prime $p$. The first option happens whenever $\krull^0(G) = k$, the second whenever $\krull^t(G) = k$.
\end{prop}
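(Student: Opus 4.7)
The plan is: pass to a finite index subgroup so that $Q=\Z^d$; exhibit inside $M$ a cyclic $\Z Q$-submodule of the form $\Z Q/\mathcal{P}$ that realises the Krull dimension $k$; then extract inside this integral domain enough algebraically independent \emph{units} (images of group elements of $Q$) to build a Laurent polynomial subring of the desired size. The dichotomy char $0$ vs.\ char $p$ comes from whether the relevant associated prime meets $\Z$ trivially or not, and this is exactly what separates the $\krull^0$ and $\krull^t$ contributions.

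By Proposition~\ref{prop-fin ind}, passing to a finite index subgroup does not change $\krull(G)$ (nor the values of $\krull^0$, $\krull^t$), so we may assume $Q = \Z^d$, and then $\Z Q$ identifies with the Noetherian commutative ring $\Z[X_1^{\pm 1}, \dots, X_d^{\pm 1}]$. By Lemma~\ref{lem-dec of M} we write $M = T(M) \oplus M'$ with $M' \simeq M/T(M)$, so that $\krull^0(G) = \krull(M')$ and $\krull^t(G) = \krull(T(M))$.

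Suppose first that $\krull^0(G) = k$. Proposition~\ref{prop-Krull dim associated prime} applied to $M'$ furnishes an associated prime $\mathcal{P}$ of $M'$ with $\krull(\Z Q/\mathcal{P}) = k$ and an embedding $\Z Q/\mathcal{P} \hookrightarrow M' \subset M$. The integral domain $\Z Q/\mathcal{P}$ is torsion-free as an abelian group (since $M'$ is), so $\mathcal{P} \cap \Z = (0)$ and $\Z \hookrightarrow \Z Q/\mathcal{P}$. Being a finitely generated $\Z$-algebra, $\Z Q/\mathcal{P}$ satisfies, by Proposition~\ref{prop-Kd trdeg}(2), $\trdeg_\Q \mathrm{Frac}(\Z Q/\mathcal{P}) = k-1$. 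Since $\mathrm{Frac}(\Z Q/\mathcal{P})$ is generated over $\Q$ by $\bar X_1, \dots, \bar X_d$, one extracts from this family a transcendence basis $\bar X_{i_1}, \dots, \bar X_{i_{k-1}}$ (any maximal algebraically independent subfamily works by Steinitz exchange). These are units in $\Z Q/\mathcal{P}$ and algebraically independent over $\Z$, so the subring they and their inverses generate is isomorphic to $\Z[X_1^{\pm 1}, \dots, X_{k-1}^{\pm 1}]$ and lies inside $M$.

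Suppose now that $\krull^t(G) = k$. The module $T(M)$ has finite exponent $n$, and Proposition~\ref{prop-Krull dim associated prime} applied to $T(M)$ yields an associated prime $\mathcal{P}$ with $\krull(\Z Q/\mathcal{P}) = k$ and $\Z Q/\mathcal{P} \hookrightarrow T(M) \subset M$. Since $n$ annihilates $T(M)$, we have $n \in \mathcal{P}$, hence some prime divisor $p$ of $n$ lies in $\mathcal{P}$, and $\Z Q/\mathcal{P}$ is a finitely generated $\mathbb{F}_p$-algebra. Proposition~\ref{prop-Kd trdeg}(1) now gives $\trdeg_{\mathbb{F}_p} \mathrm{Frac}(\Z Q/\mathcal{P}) = k$, and the same transcendence-basis extraction produces $k$ units among the $\bar X_i$ that are algebraically independent over $\mathbb{F}_p$; they generate a subring of $\Z Q/\mathcal{P} \subset M$ isomorphic to $\mathbb{F}_p[X_1^{\pm 1}, \dots, X_k^{\pm 1}]$. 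The only slightly delicate step is that the transcendence basis must be chosen \emph{among} the $\bar X_i$ themselves (not merely after a Noether normalization), because we want the independent elements to be invertible in order to obtain a Laurent rather than an ordinary polynomial ring; this is precisely what the Steinitz exchange argument supplies.
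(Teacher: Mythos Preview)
Your proof follows the same overall strategy as the paper: reduce to $Q=\Z^d$, separate torsion and torsion-free parts via Lemma~\ref{lem-dec of M}, locate an associated prime of the right dimension via Proposition~\ref{prop-Krull dim associated prime}, and then extract enough algebraically independent units inside $\Z Q/\PP$. The main cosmetic difference is that you invoke Steinitz exchange to pick a transcendence basis directly among the $\bar X_i$, whereas the paper proves a short lemma finding transcendental \emph{monomials} one at a time and iterates; your route is marginally more direct and equally valid, since $\mathrm{Frac}(\Z Q/\PP)$ is indeed generated over the prime field by the $\bar X_i$.

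There is one genuine slip in the torsion-free case. You write $M=T(M)\oplus M'$ with $M'\simeq M/T(M)$ and then chain $\Z Q/\PP\hookrightarrow M'\subset M$. But Lemma~\ref{lem-dec of M} only gives $M'$ as an abelian \emph{subgroup} of $M$, not as a $\Z Q$-submodule; the isomorphism $M'\simeq M/T(M)$ is of groups, not of modules. Hence the Laurent polynomial subring you produce inside $M'$ carries the $Q$-action transported from $M/T(M)$, which need not agree with the restriction of the $Q$-action on $M$. Since the intended use (e.g.\ Proposition~\ref{prop-split case}) requires the indeterminates to act as genuine elements of $Q$ on a subgroup of $M$, this matters. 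The fix is exactly what the paper does (``up to pulling back the polynomial ring''): work in the quotient $M/T(M)$, find the alg.\ independent $\bar X_{i_1},\dots,\bar X_{i_{k-1}}$ and the cyclic generator $\bar m$ there, then lift $\bar m$ to some $m\in M$ and observe that the map $\Z[X_1^{\pm1},\dots,X_{k-1}^{\pm1}]\to M$, $P\mapsto P(X_{i_1},\dots,X_{i_{k-1}})\cdot m$, is injective because its composition with $M\to M/T(M)$ is. Your torsion case is fine as written, since $T(M)$ \emph{is} a $\Z Q$-submodule of $M$.
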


It will be enough to deal with the case of $M$ being a ring, thanks to Propositions \ref{prop-dec M} and \ref{prop-Krull dim associated prime} on the structure of $M$.

\medskip

\subsubsection{The case of a ring}~

In this paragraph, let $Q$ be a finitely generated free abelian group and $A$ be a ring isomorphic to $\Z Q / \PP$, for some prime ideal $\PP$. The group $Q$ acts on $A$ by multiplication by the corresponding monomial. The characteristic of $A$ is either a prime $p$, or zero.
Denote by $K$ be the fraction field of $A$, and by $\mathbb P$ its prime field.

\begin{prop}\label{prop-tr e}
Let $D$ be the degree of transcendence of $K$ over $\mathbb P$. 

Then $A$ contains a family of $D$ transcendental monomials which is algebraically free.
The subring generated by this family is, according to the characteristic, either of the form 
\[\Z[X_1^{\pm 1}, \dots , X_D^{\pm 1}] \text{ ~~or~~ } \Z/p\Z [X_1^{\pm 1}, \dots , X_D^{\pm 1}], \] 
for some prime $p$. 

As a consequence, if $A$ has Krull dimension $d$, we have the following dichotomy :
\begin{itemize}
\item when $A$ has characteristic $p$, $A$ has a subring isomorphic to  $\Z/p\Z [X_1^{\pm 1}, \dots , X_d^{\pm 1}]$.
\item when $A$ has characteristic zero, $A$ has a subring isomorphic to $\Z[X_1^{\pm 1}, \dots , X_{d-1}^{\pm 1}]$.
\end{itemize}

\end{prop}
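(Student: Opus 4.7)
The plan is to extract the required monomials directly from a finite set of algebra generators of $A$ and then read off the dichotomy from the Krull-dimension/transcendence-degree dictionary of Proposition~\ref{prop-Kd trdeg}. First I fix a $\Z$-basis $q_1, \ldots, q_n$ of $Q$ and let $t_i \in A$ denote the image of $q_i$; each $t_i$ is invertible (its inverse being the image of $q_i^{-1}$ in $\Z Q/\mathcal P$), and the $t_i^{\pm 1}$ generate $A$ over its prime subring ($\Z$ in characteristic $0$, $\mathbb F_p$ in characteristic $p$). In particular $K = \mathbb P(t_1, \ldots, t_n)$.

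Next I would pick a subset $\{t_{i_1}, \ldots, t_{i_r}\} \subseteq \{t_1, \ldots, t_n\}$ that is maximal among algebraically independent subsets over $\mathbb P$. Maximality forces every remaining $t_j$ to be algebraic over $\mathbb P(t_{i_1}, \ldots, t_{i_r})$, hence $K$ itself is algebraic over this subfield; so $\{t_{i_1}, \ldots, t_{i_r}\}$ is a transcendence basis of $K/\mathbb P$, and $r = D$. To identify the subring of $A$ that they generate over the prime subring, I observe that the natural map from $\Z[X_1^{\pm 1}, \ldots, X_D^{\pm 1}]$ (resp.\ $\mathbb F_p[X_1^{\pm 1}, \ldots, X_D^{\pm 1}]$) sending $X_j$ to $t_{i_j}$ is surjective onto this subring by construction; it is also injective, because any nonzero element in its kernel, after multiplying by a suitable monomial to clear negative exponents, would produce a nontrivial polynomial relation among the $t_{i_j}$ with coefficients in $\mathbb P$, contradicting the algebraic independence just established.

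Finally, the dichotomy follows by invoking Proposition~\ref{prop-Kd trdeg}: in characteristic $p$, $A$ is a finitely generated $\mathbb F_p$-algebra, so $\krull(A) = \trdeg_{\mathbb F_p} K = D$ and therefore $d = D$; in characteristic $0$, $A$ is a finitely generated $\Z$-algebra, so $\krull(A) = \trdeg_{\Q} K + 1 = D + 1$ and therefore $D = d - 1$. The one step that genuinely requires care — rather than being a real obstacle — is the extraction of the transcendence basis from the finite generating set of $K$ over $\mathbb P$; this is a standard application of the exchange principle in transcendence theory, so I do not anticipate any serious difficulty in the argument.
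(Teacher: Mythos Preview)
Your argument is correct and, if anything, a touch more economical than the paper's. The difference lies in \emph{which} monomials you select and how you justify that there are $D$ of them.

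The paper proceeds inductively: it proves a separate lemma showing that whenever $\trdeg_C K > 0$ for some subfield $C$, one can find a single monomial of $A$ transcendental over $C$ (by writing a transcendental element as a fraction, then as a linear combination of monomials, and arguing that one summand must be transcendental). Applying this lemma with $C = \mathbb P, \mathbb P(m_1), \mathbb P(m_1,m_2), \ldots$ produces the desired family one monomial at a time. The monomials obtained may be arbitrary elements of the image of $Q$, not necessarily images of basis vectors.

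You instead fix the images $t_1,\dots,t_n$ of a $\Z$-basis of $Q$, note that $K=\mathbb P(t_1,\dots,t_n)$, and extract a maximal algebraically independent subset from this finite generating set; the pregeometry property of algebraic dependence (any maximal independent subset of a generating set is a transcendence basis) gives $r=D$ in one stroke, with no induction and no auxiliary lemma. This yields the slightly sharper conclusion that the $D$ monomials can be taken among the images of the chosen basis of $Q$, which is more than the paper states but harmless for the downstream applications. Both routes feed identically into Proposition~\ref{prop-Kd trdeg} for the final dichotomy.
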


\begin{lem}
Let $C$ be a subfield of $K$. If the degree of transcendence of $K$ over $C$ is positive then there is a transcendental monomial in $A$.
\end{lem}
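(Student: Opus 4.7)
The plan is to reduce the statement to the observation that $K$ is generated as a field, over its prime field, by the images in $A$ of the canonical generators of $Q$.

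First I would fix an isomorphism $Q \simeq \Z^d$, so that $\Z Q$ identifies with $\Z[X_1^{\pm 1}, \dots, X_d^{\pm 1}]$, and let $x_i \in A$ be the image of $X_i$ under the quotient map $\Z Q \twoheadrightarrow A = \Z Q/\PP$. Each $x_i$ is, by construction, a monomial in $A$ (namely the image of the group element $X_i \in Q$), and together $x_1^{\pm 1}, \dots, x_d^{\pm 1}$ generate $A$ as a $\Z$-algebra. Consequently, the fraction field $K$ of $A$ is equal to $\mathbb{P}(x_1, \dots, x_d)$, where $\mathbb{P} \subseteq K$ denotes the prime field.

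Since $C$ is a subfield of $K$, it has the same characteristic as $K$, so it contains $\mathbb{P}$; hence $K = C(x_1, \dots, x_d)$. If every $x_i$ were algebraic over $C$, the extension $C \subseteq K$ would be algebraic, contradicting the assumption that $\trdeg_C K > 0$. Therefore some $x_i$ is transcendental over $C$, and this $x_i$ is a transcendental monomial in $A$, as required.

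There is no real obstacle here: the content of the lemma is essentially that the set of monomials in $A$ already contains a field-theoretic generating set of $K$ over the prime field, and once this is noted, the conclusion is an immediate application of the definition of transcendence degree.
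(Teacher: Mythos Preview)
Your proof is correct. The approach differs from the paper's, though both rest on the same basic fact that the algebraic closure of $C$ in $K$ is a field.

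The paper argues generically: it starts from an arbitrary transcendental element of $K$, writes it as a fraction $t_1/t_2$ with $t_1,t_2\in A$, observes that one of $t_1,t_2$ must be transcendental over $C$ (since algebraic elements form a subfield), then writes that $t_i$ as a $\Z$-linear combination of monomials and applies the same subfield observation once more to conclude that one of those monomials is transcendental. Your argument instead singles out the canonical generators $x_1,\dots,x_d$ from the start, notes that $K=\mathbb P(x_1,\dots,x_d)\subseteq C(x_1,\dots,x_d)$, and concludes directly that some $x_i$ must be transcendental over $C$. Your route is shorter and produces a monomial of an especially simple form (one of the $X_i$ themselves), which in fact makes the inductive application in Proposition~\ref{prop-tr e} even more transparent: iterating your argument shows immediately that a subset of $\{x_1,\dots,x_d\}$ of size $D$ is algebraically independent. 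The paper's approach, while slightly longer, has the mild advantage of showing how to extract a transcendental monomial from \emph{any} given transcendental element of $K$, but this extra generality is not used later.
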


\begin{proof}
By hypothesis, there exists in $K$ an element which is transcendent over $C$. We can write it $\frac{t_1}{t_2}$, with $t_1, t_2 \in A$. The set of all elements of $K$ that are algebraic over $C$ is a subfield of $K$, therefore one of the $t_i$'s has to be transcendental. 
This transcendental element belongs to $A$ and is a polynomial modulo $\PP$, hence a linear combination of monomials. Again, one of its monomials has to be transcendental.
\end{proof}

We may now proceed to the proof of the proposition.

\begin{proof}[Proof of Proposition \ref{prop-tr e}]
The proof is by induction on $D$.

Applying the previous lemma with $C = \mathbb P$ yields a monomial $m_1 \in A$, transcendental over $\mathbb P$.
Let $k \leq D$. Suppose we have constructed an algebraically free family $(m_1, \dots, m_{k-1})$ of transcendental monomials of $A$ over $\mathbb P$. Then, the transcendence degree of $K$ over $\mathbb P(m_1, \dots , m_{k-1})$ is $D - k + 1 > 0$. We apply the lemma again with $C = \mathbb P(m_1, \dots , m_{k-1})$ to get $m_k$.
\end{proof}

\subsubsection{Proof of proposition \ref{prop-max family trans e}}

\begin{proof}

Applying Proposition \ref{prop-fin ind}, we may assume that $G_{ab}$ is free abelian of rank $d$. Hence, $G$ fits into an exact sequence such as (\ref{eq-exseq trans mod}) with quotient $Q \simeq \Z^d$, for some $d$.
As a consequence of Lemma \ref{lem-dec of M}, one can write the module $M$ as $M = T(M) \oplus M_0$, where $T(M)$ is the torsion subgroup of $M$ and $M_0$ is torsion-free. We have
\begin{equation*}
\krull^0(G) = \krull(M/T(M)) \text{ and } \krull^t(G) = \krull(T(M)).
\end{equation*}
Proposition \ref{prop-Krull dim associated prime} implies that $T(M)$, respectively $M/T(M)$, contains a ring isomorphic to $\Z \Z^d/ \PP$ of Krull dimension $\krull(T(M))$, respectively $\krull(M/T(M))$. 

Conclusion then follows from Propositions \ref{prop-Kd trdeg} and \ref{prop-tr e}, up to pulling back the polynomial ring in the second case.
\end{proof}

\subsection{Wreath products inside metabelian groups} 

We use the results of the previous subsection to exhibit wreath products as subgroups of some (split) metabelian groups.

\subsubsection{In split metabelian groups}

\begin{prop}\label{prop-split case}
Let $G$ be a split finitely generated metabelian group, that is ${G = M \rtimes Q}$, with $M$ and $Q$ abelian. Let $k$ be the Krull dimension of $G$ and assume $k \geq 1$. Then, 
\begin{enumerate}
\item if $\krull^0(G) = k$, then $G$ has a subgroup isomorphic to $\Z \wr \Z^{k-1}$. As a consequence,
\begin{equation*}
p_{2n}^G \precsim p_{2n}^{\Z \wr \Z^{k-1}} \sim \exp\left(-n^\frac{k-1}{k+1} (\log n)^\frac{2}{k+1}\right).
\end{equation*}
\item otherwise, there exists a prime $p$ such that $G$ has a subgroup isomorphic to $(\Z/ p\Z) \wr \Z^k$. As a consequence,
\begin{equation*}
p_{2n}^G \precsim p_{2n}^{(\Z/p\Z) \wr \Z^k} \sim \exp\left(-n^\frac{k}{k+2}\right).
\end{equation*}
\end{enumerate}
\end{prop}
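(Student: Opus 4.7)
The plan is to apply Proposition \ref{prop-max family trans e} to extract an appropriate polynomial subring of $M$, and then exploit the splitting $G = M \rtimes Q$ to produce a wreath-product subgroup directly from this data.

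In case (1), where $\krull^0(G) = k$, Proposition \ref{prop-max family trans e} supplies a subring $R \subset M$ isomorphic to $\Z[X_1^{\pm 1}, \dots, X_{k-1}^{\pm 1}]$. By the construction in Proposition \ref{prop-tr e}, each $X_i$ is a transcendental monomial, i.e.\ arises from a specific element $q_i \in Q$ acting on a cyclic generator, so that the conjugation action of $q_i$ on $M$ restricts to multiplication by $X_i$ on $R$. The algebraic independence of $X_1, \dots, X_{k-1}$ in $R$ immediately forces $\Z$-linear independence of $q_1, \dots, q_{k-1}$ in $Q$: a nontrivial relation $\sum n_i q_i = 0$ in $Q$ would yield $\prod X_i^{n_i} = 1$ in $R$, contradicting algebraic independence. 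Hence $\langle q_1, \dots, q_{k-1}\rangle \simeq \Z^{k-1}$ inside the split copy of $Q$ in $G$.

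Next I would set $H := \langle R, q_1, \dots, q_{k-1}\rangle \subset G$. Since $R$ is stable under the action of each $q_i$ (it is closed under multiplication by $X_i^{\pm 1}$), the splitting gives $H = R \rtimes \Z^{k-1}$. As an additive group, $R = \bigoplus_{\alpha \in \Z^{k-1}} \Z \cdot X^\alpha$, and the $\Z^{k-1}$-action is exactly the shift on indices; this identifies $H$ with $\Z \wr \Z^{k-1}$. Case (2) is entirely parallel: starting from $R \simeq \mathbb{F}_p[X_1^{\pm 1}, \dots, X_k^{\pm 1}] \subset M$ and the corresponding $q_1, \dots, q_k \in Q$, the same construction produces $H = R \rtimes \Z^k \simeq (\Z/p\Z) \wr \Z^k$.

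The return probability estimates then follow from the two inputs recalled in Subsection \ref{ssec-return proba}: the subgroup stability property $p_{2n}^G \precsim p_{2n}^H$, and the known asymptotics $p_{2n}^{\Z \wr \Z^d} \sim \exp(-n^{d/(d+2)}(\log n)^{2/(d+2)})$ and $p_{2n}^{F \wr \Z^d} \sim \exp(-n^{d/(d+2)})$ for finite $F$, applied with $d = k-1$ and $d = k$ respectively. The only delicate point is the guarantee that in case (1) the polynomial ring sits genuinely inside $M$ and not merely in $M/T(M)$; this is absorbed into the statement of Proposition \ref{prop-max family trans e}, so the essential new content of this proof is simply the passage from \emph{subring with transcendental monomials} to \emph{wreath product subgroup} via the splitting, and I expect no further obstacle.
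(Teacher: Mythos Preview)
Your proposal is correct and follows essentially the same route as the paper: invoke Proposition \ref{prop-max family trans e} to obtain the polynomial subring $R\subset M$ whose variables come from monomials $q_i\in Q$, and then use the splitting to form $R\rtimes\langle q_1,\dots\rangle$, which is the desired wreath product. The paper first passes to a finite-index subgroup to make $Q$ free abelian (via Proposition \ref{prop-fin ind}), whereas you instead argue directly that algebraic independence of the $X_i$ forces $\Z$-linear independence of the $q_i$; both handle the same issue, and the remainder is identical.
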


\begin{proof}
Up to passing to a finite index subgroup, we may assume that $Q$ is a finitely generated free abelian group (Propositon \ref{prop-fin ind}). This does not change the dimension nor the return probability.
In both cases, Proposition \ref{prop-max family trans e} implies that $M$ contains a ring isomorphic to either $\Z[X_1^{\pm 1}, \dots X_{k-1}^{\pm 1}]$, or to $\mathbb{F}_p [X_1^{\pm 1}, \dots X_k^{\pm 1}]$ for some prime $p$. We denote this ring by $B$.

Recall that these algebraically free transcendental elements are actually monomials of $Q \simeq \Z^d$. The subgroup of $G$ generated by $B$ and these monomials is either $\Z \wr\Z^{k-1}$ or $(\Z/p\Z) \wr \Z^k$.
\end{proof}

\medskip

\subsubsection{In general metabelian groups}~

In the general case, we may not be able to lift commutatively the transcendental elements obtained as we do in Proposition \ref{prop-split case}. 

For instance, in the free metabelian group $B_2$, the images of the two generators of $\Z^2$ by any section $s$ from $\Z^2$ to $B_2$ will never commute.

\bigskip

We still have the following corollary.

\begin{cor}\label{cor-ZZ}
Let $G$ be a metabelian group. If $\krull^0(G) \geq 2$, then $G$ has a subgroup isomorphic to $\Z\wr\Z$ and 
\begin{equation*}
p_{2n}^G \precsim p_{2n}^{\Z \wr \Z} \sim \exp\left(-n^\frac{1}{3} (\log n)^\frac{2}{3}\right).
\end{equation*}
\end{cor}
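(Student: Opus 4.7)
The key observation is that the full force of Proposition \ref{prop-split case} is not needed: since $\Z \wr \Z$ requires only a single transcendental monomial, we do not face the obstruction from the $B_2$-style example (non-commutativity of arbitrary lifts of distinct monomials). Thus the argument splits into two moves: extract a transcendental element from the torsion-free part of the module, then lift it together with a single monomial to $G$, verifying that the subgroup they generate is automatically $\Z \wr \Z$ because the base sits inside the abelian $M$.

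\textbf{Finding and lifting a transcendental element.} Applying Proposition \ref{prop-fin ind}, we may replace $G$ by a finite-index subgroup and assume $G$ fits into $\exseq{M}{G}{\Z^d}$ with $M$ abelian, preserving $\krull^0(G)\ge 2$. By Lemma \ref{lem-dec of M}, $T(M)$ is a submodule and $M/T(M)$ is a torsion-free $\Z Q$-module of Krull dimension $\krull^0(G)\ge 2$. Proposition \ref{prop-Krull dim associated prime} then yields an associated prime $\PP$ with $\krull(\Z Q/\PP)\ge 2$ and $\Z Q/\PP \hookrightarrow M/T(M)$. Since $M/T(M)$ is torsion-free, $\Z Q/\PP$ has characteristic zero, so Proposition \ref{prop-Kd trdeg} gives $\trdeg_\Q\mathrm{Frac}(\Z Q/\PP)\ge 1$, and Proposition \ref{prop-tr e} produces a monomial $q_1\in Q$ whose image $X_1\in\Z Q/\PP$ is transcendental over $\Q$. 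Let $e_0\in M/T(M)$ correspond to $1\in\Z Q/\PP$ and lift $e_0$ to $e\in M$. The elements $q_1^n\cdot e$ project onto $X_1^n\cdot e_0$ in $M/T(M)$; these projections are $\Z$-linearly independent by transcendence of $X_1$, so the $q_1^n\cdot e$ are $\Z$-linearly independent in $M$, generating a free abelian subgroup $B\le M$ of countably infinite rank on which $\langle q_1\rangle$ acts by shift.

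\textbf{Building $\Z \wr \Z$ and concluding.} Choose any lift $t\in G$ of $q_1\in Q$ and set $H:=\langle e,t\rangle\le G$. Conjugation yields $t^n e t^{-n}=q_1^n\cdot e$, so $B\le H$. Every element of $H$ can be rewritten (by pushing all $t^{\pm 1}$'s to the right) as $b\cdot t^k$ with $b\in B$ and $k\in\Z$, and since $X_1$ transcendental forces $q_1$ to have infinite order in $Q$, $t$ has infinite order modulo $M$, so $B\cap\langle t\rangle=\{1\}$. As $B\subset M$ is abelian, the generators $q_1^n\cdot e$ commute pairwise, and $H$ is the split extension $B\rtimes\langle t\rangle\cong\Z\wr\Z$. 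The stability of return probability under taking subgroups (Section \ref{ssec-return proba}) combined with the known estimate $p_{2n}^{\Z\wr\Z}\sim\exp\bigl(-n^{1/3}(\log n)^{2/3}\bigr)$ (recalled in the subsection on known behaviours) finishes the proof.

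\textbf{Anticipated obstacle.} There is no serious obstacle, but the one point requiring care is the linear independence of the conjugates $q_1^n\cdot e$ \emph{inside} $M$ itself, not merely inside $M/T(M)$; the projection argument handles this cleanly, so what looks like a lifting difficulty in the non-split setting evaporates because we use only one monomial. A version of this argument for $\krull^0(G)\ge k+1$ producing $\Z\wr\Z^k$ would immediately run into the non-commutativity of multiple lifts, which is precisely why the paper restricts to $\Z\wr\Z$ here.
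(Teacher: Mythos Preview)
Your proof is correct and matches the paper's (implicit) argument. The paper does not spell out a proof of this corollary, merely noting after Proposition \ref{prop-split case} that the obstruction to lifting several monomials commutatively disappears when only one monomial is needed; your write-up supplies precisely the details the paper leaves to the reader, via the same ingredients (Propositions \ref{prop-fin ind}, \ref{prop-Krull dim associated prime}, \ref{prop-Kd trdeg}, \ref{prop-tr e}) and the same key observation that a single lift $t$ of $q_1$ suffices to realise $\Z\wr\Z$ inside $G$.
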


This proves the first part of Proposition \ref{prop-special subgroups}. The next part deals with the case $\krull^t(G) \geq 2$.

\subsection{Extensions of torsion modules by $\Z^2$}

Let $p$ be a prime. The $p$-metabelian free group of rank $2$, $B_2^{(p)}$ fits into an exact sequence
\begin{equation}\label{eq-exseq B_2^p}
\exseq{[B_2^{(p)}, B_2^{(p)}]}{B_2^{(p)}}{\Z^2}.
\end{equation}
Denote by $\alpha$ and $\beta$ its two generators. As a group, $[B_2^{(p)}, B_2^{(p)}]$ is generated by conjugates of $[\alpha, \beta]$. Hence, it is generated by $[\alpha, \beta]$ as a $\mathbb F_p[X^{\pm 1}, Y^{\pm 1}]$-module and $[B_2^{(p)}, B_2^{(p)}]$ is a cyclic module. As it is also a torsion-free module, it is isomorphic to $F_p[X^{\pm 1}, Y^{\pm 1}]$.

\begin{lem}\label{lem-non split extension of Z2}
Let $H$ be an extension of $\mathbb F_p[X^{\pm 1}, Y^{\pm 1}]$ by $\Z^2$, with the usual action of $\Z^2$ on $\mathbb F_p[X^{\pm 1}, Y^{\pm 1}]$. Then $H$ is metabelian and has a subgroup isomorphic to $B_2^{(p)}$.
\end{lem}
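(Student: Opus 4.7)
The plan is to show $H$ is metabelian for free (its derived subgroup sits in the abelian normal subgroup $M = \mathbb{F}_p[X^{\pm 1}, Y^{\pm 1}]$), and then to find two elements of $H$ that generate a copy of $B_2^{(p)}$ by using the universal property of the free $p$-metabelian group together with the fact that $\mathbb{F}_p[X^{\pm 1}, Y^{\pm 1}]$ is an integral domain.

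\medskip

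\textbf{Step 1 (Metabelian).} Since $H/M \simeq \Z^2$ is abelian, $[H,H] \subseteq M$, and $M$ itself is abelian, so $H$ is metabelian. Moreover the exponent of $[H,H]$ divides $p$.

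\textbf{Step 2 (Picking lifts).} Fix any lifts $x_0, y_0 \in H$ of the standard generators of $\Z^2$. For arbitrary $m, n \in M$, set $x = m x_0$ and $y = n y_0$. A direct computation, using that $M$ is abelian and that $x_0, y_0$ act on $M$ by multiplication by $X, Y$ respectively, gives the identity
\begin{equation*}
[m x_0,\, n y_0] \;=\; [x_0, y_0] \,+\, (1-Y)\, m \,+\, (X-1)\, n \;\in\; M.
\end{equation*}
The achievable values of $[x,y]$ therefore form the coset $[x_0,y_0] + (1-Y)M + (X-1)M$. Since $(1-Y)M + (X-1)M$ is a nonzero submodule of $M$, this coset is nonzero and in particular contains an element $c \neq 0$ (explicitly: if $[x_0,y_0]=0$ take $m=1, n=0$ to get $1-Y \neq 0$; otherwise take $m=n=0$ if $[x_0,y_0]$ is already nonzero, or shift by a nonzero element of the submodule to leave $0$). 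Fix such $m, n$ and set $c := [x,y] \neq 0$.

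\textbf{Step 3 (Map from $B_2^{(p)}$).} Let $\alpha, \beta$ be the free generators of $B_2^{(p)}$. Since $B_2^{(p)} = F_2/F_2''(F_2')^p$ and $H$ is metabelian with $[H,H]$ of exponent $p$, the assignment $\alpha \mapsto x$, $\beta \mapsto y$ extends to a homomorphism $\varphi \colon B_2^{(p)} \to H$ by the universal property. Let $K = \varphi(B_2^{(p)})$.

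\textbf{Step 4 (Injectivity).} The kernel of $\varphi$ projects trivially to $\Z^2$ (since $\varphi(\alpha), \varphi(\beta)$ project to generators), so it lies in $[B_2^{(p)}, B_2^{(p)}]$. Recall that $[B_2^{(p)}, B_2^{(p)}]$ is a cyclic $\mathbb{F}_p[X^{\pm 1}, Y^{\pm 1}]$-module generated by $[\alpha,\beta]$, and in fact isomorphic to $\mathbb{F}_p[X^{\pm 1}, Y^{\pm 1}]$ itself (as noted in the paragraph preceding the lemma). Under $\varphi$, this module maps $\mathbb{F}_p[X^{\pm 1}, Y^{\pm 1}]$-equivariantly into $M$ by sending $1 \mapsto c$, i.e.\ by multiplication by $c$ inside $\mathbb{F}_p[X^{\pm 1}, Y^{\pm 1}]$. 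Because $\mathbb{F}_p[X^{\pm 1}, Y^{\pm 1}]$ is an integral domain and $c \neq 0$, multiplication by $c$ is injective. Hence $\varphi$ is injective, and $K \simeq B_2^{(p)}$.

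\medskip

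The only real subtlety is Step 2: one must be sure that a suitable choice of lifts produces a nonzero commutator in $M$, so that multiplication by $c$ in the integral domain $\mathbb{F}_p[X^{\pm 1}, Y^{\pm 1}]$ is injective in Step 4. Everything else is bookkeeping around the universal property of $B_2^{(p)}$ and the structure of its derived subgroup as a cyclic torsion-free $\mathbb{F}_p[X^{\pm 1}, Y^{\pm 1}]$-module.
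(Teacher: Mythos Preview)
Your proof is correct and follows essentially the same approach as the paper: pick two non-commuting lifts of the generators of $\Z^2$, invoke the universal property of $B_2^{(p)}$, and show injectivity by analyzing the induced map on derived subgroups. Your Step~2 makes explicit (via the commutator formula) what the paper simply asserts---that non-commuting lifts exist---and your Step~4 phrases injectivity as ``multiplication by $c\neq 0$ in an integral domain'' where the paper says ``morphism of extensions, hence an isomorphism''; these are the same argument in slightly different clothing.
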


\begin{proof}
$H$ is metabelian because the action is not trivial.

Take $a, b$  in $H$ so that they do not commute and so that their projections in $\Z^2$ generate $\Z^2$.
We claim that $\langle a, b \rangle$ is isomorphic to $B_2^{(p)}$.
Indeed, $\langle a, b \rangle$ is a metabelian group generated by two elements. Its derived subgroup is the ideal $([a, b])$ in $\mathbb F_p[X^{\pm 1}, Y^{\pm 1}]$, which is isomorphic to $\mathbb F_p[X^{\pm 1}, Y^{\pm 1}]$. It satisfies
\begin{equation}\label{eq-exseq <a,b>}
\exseq{\mathbb F_p[X^{\pm 1}, Y^{\pm 1} ]}{\langle a, b \rangle}{\Z^2}.
\end{equation}
Moreover, it is a quotient of the free $p$-metabelian group of rank $2 :$ there exists $$p: B_2^{(p)} \twoheadrightarrow \langle a, b \rangle.$$ 
This $p$ is a morphism of extensions (\ref{eq-exseq B_2^p}) to (\ref{eq-exseq <a,b>}), hence an isomorphism.
\end{proof}

\begin{rem}
If the extension splits, the group $H$ has a subgroup isomorphic to the bigger group $(\Z/p\Z) \wr \Z^2$.
\end{rem}

We may now complete the proof of Proposition \ref{prop-special subgroups}.

\begin{proof}[Proof of Proposition \ref{prop-special subgroups}]
Let $G$ be a finitely generated metabelian group. Again, we can reduce by Proposition \ref{prop-fin ind} to the case of $G$ fitting in an exact sequence such as (\ref{eq-exseq trans mod}) with torsion-free quotient $\Z^d$, for some $d$ . Assume that $\krull(G) \geq 2$. There are two cases: 
\begin{enumerate}
\item $\krull^0(G) \geq 2$. Then, Corollary \ref{cor-ZZ} implies that $G$ has a subgroup isomorphic to $\Z \wr \Z$.
\item $\krull^t(G) \geq 2$. Let
\begin{equation*}
\exseq{M}{G}{Q}
\end{equation*} with $M$ abelian, and $Q$ torsion-free abelian of finite rank.
Then, $M$ has a submodule isomorphic to $\mathbb F_p[X^{\pm 1}, Y^{\pm 1}]$, for some prime $p$ (see Proposition \ref{prop-max family trans e}). The elements $X$ and $Y$ come from monomials of $Q \simeq \Z^d$. The subgroup generated by the ring $\mathbb F_p[X^{\pm 1}, Y^{\pm 1}]$ and lifts of the two corresponding monomials is an extension of $\mathbb F_p[X^{\pm 1}, Y^{\pm 1}]$ by $\Z^2$, with the usual action of $\Z^2$. We conclude with an application of Lemma \ref{lem-non split extension of Z2}.
\end{enumerate}

Consequences in term of return probability follows from computations for $\Z\wr\Z$ (see \cite{Erschler06}) and $B_2^{(p)}$ (see Proposition \ref{prop-return proba B_2^p}).
\end{proof}

\section{Return probability lower bounds via the construction of sequences of F\o lner couples}\label{sec-folner}

\subsection{F\o lner couples and return probability}

\begin{defn}[see \cite{CGP01} and \cite{Erschler06}] 
Let $G$ be a finitely generated metabelian group. We denote by $S$ a finite and symmetric generating set of $G$, and consider the associated word distance. Let $\mathcal{V}$ be a positive continuous increasing function on $[1, + \infty)$ whose inverse is defined on $[\mathcal{V}(1), +\infty)$. We say that $G$ admits \emph{a sequence of F\o lner couples adapted to $\mathcal{V}$} if there exists a sequence $(\OO_m, \OO_m')_{m \in \N}$ of pairs of non-empty finite sets $\OO_m' \subset \OO_m$ in $G$, with $\#\OO_m \nearrow \infty$ such that
\begin{enumerate}
\item $\#\OO_m'$ is a positive proportion of $\#\OO_m: \#\OO_m' \geq c_0 \#\OO_m$.
\item $\OO_m'$ lies linearly inside $\#\OO_m: \OO'_m S^m \subset \OO_m$.
\item $\mathcal{V}$ controls the size: $\#\OO_m \leq \mathcal{V}(m)$.
\end{enumerate}
\end{defn}

Given this, one can deduce a lower bound for the return probability, depending on $\mathcal V$ (see Coulhon, Grigor'yan and Pittet \cite{CGP01}, as well as Erschler \cite{Erschler06} for more general statements). We only recall the corollary that we need.

\begin{cor}[\cite{CGP01}]\label{cor-lower bound}
If a group $G$ admits a sequence of F\o lner couples adapted to a function of the form $\mathcal{V}(t) = C\exp(Ct^d)$, then 
\begin{equation*}
p_{2n}^G \succsim \exp(-n^\frac{d}{d+2}).
\end{equation*}
\end{cor}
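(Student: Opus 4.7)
The plan is to convert the F\o lner couple data $(\OO_m, \OO_m')$ into a Dirichlet eigenvalue estimate on $\OO_m$, combine it with a standard spectral lower bound on the trace of the heat kernel, and then optimize in $m$. By the Pittet--Saloff-Coste invariance recalled in the introduction, it suffices to establish the bound for one specific symmetric finitely supported generating measure; I would take $\mu = \mu_S$, denote by $P$ its Markov operator, by $P_m$ the Dirichlet restriction of $P$ to $\OO_m$ (that is $\mathbf 1_{\OO_m} P \mathbf 1_{\OO_m}$ acting on $\ell^2(\OO_m)$), and by $\lambda_m$ the smallest eigenvalue of $I-P_m$.

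For the Dirichlet eigenvalue estimate, I would test the Rayleigh quotient against the bump function $\varphi_m(x) = \bigl(m - d_S(x,\OO_m')\bigr)_+$, where $d_S$ is the word metric attached to $S$. The inclusion $\OO_m' S^m \subset \OO_m$ forces $\varphi_m$ to be supported in $\OO_m$, and $\varphi_m \equiv m$ on $\OO_m'$. Since $\varphi_m$ is $1$-Lipschitz and $\mu$ has finite range, $\langle (I-P)\varphi_m,\varphi_m\rangle \leq C\,\#\OO_m$, while $\|\varphi_m\|_2^2 \geq m^2 \,\#\OO_m' \geq c_0 m^2 \,\#\OO_m$. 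Rayleigh's principle then yields $\lambda_m \leq C/m^2$. Now Dirichlet monotonicity $P_m^{2n}(x,x) \leq P^{2n}(x,x)$ combined with translation invariance of $P$ gives, after averaging over $\OO_m$,
\begin{equation*}
p_{2n}^{\mu,G} \;\geq\; \frac{1}{\#\OO_m}\operatorname{tr}(P_m^{2n}) \;\geq\; \frac{(1-\lambda_m)^{2n}}{\#\OO_m} \;\geq\; \frac{\exp(-Cn/m^2)}{\mathcal V(m)},
\end{equation*}
using $\log(1-t)\geq -2t$ for $t$ small and $\#\OO_m \leq \mathcal V(m)$. Plugging in $\mathcal V(m) = C\exp(Cm^d)$, the exponent $-Cm^d - C'n/m^2$ is (up to constants) minimized for $m \sim n^{1/(d+2)}$, and this yields $p_{2n}^{\mu,G} \succsim \exp(-n^{d/(d+2)})$.

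The step requiring care is the trace inequality $\operatorname{tr}(P_m^{2n}) \geq (1-\lambda_m)^{2n}$: a priori $P_m$ could have negative eigenvalues, so one cannot simply keep a single eigenvalue in the spectral expansion of $\operatorname{tr}(P_m^{n})$ for odd $n$. Here, however, $P_m$ is self-adjoint of norm at most $1$, so $P_m^{2n}$ is positive semi-definite and $\operatorname{tr}(P_m^{2n})$ is a sum of non-negative terms, of which the Perron contribution $(1-\lambda_m)^{2n}$ is one. If one prefers to avoid any discussion of the sign of the spectrum, the laziness trick --- replacing $\mu$ by $(\mu+\delta_e)/2$, which lies in the same Pittet--Saloff-Coste return probability class --- makes every $P_m$ positive semi-definite and removes the issue.
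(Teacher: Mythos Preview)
The paper does not give its own proof of this corollary; it simply quotes it from Coulhon--Grigor'yan--Pittet \cite{CGP01}. Your argument is correct and is precisely the standard proof from that reference: test the Dirichlet form against a tent function over $\Omega_m'$ to get $\lambda_m \lesssim m^{-2}$, feed this into the trace lower bound $p_{2n} \geq \#\Omega_m^{-1}\operatorname{tr}(P_m^{2n}) \geq \#\Omega_m^{-1}(1-\lambda_m)^{2n}$, and optimize in $m$. Your handling of the sign issue for the spectrum of $P_m$ (either via the even power or via lazification) is also the usual fix, so there is nothing to correct or to contrast.
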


\subsection{F\o lner couples for split metabelian groups}

In this part, we prove Theorem \ref{thm-lower bound torsion}. For any finitely generated metabelian group, Proposition \ref{prop-fin ind} provides us with a finite index subgroup $G$, of the same Krull dimension, admitting an exact sequence
\begin{equation}\label{eq-exseq 5}
\exseq{M}{G}{\Z^d},
\end{equation}
with $M$ abelian. The return probability of $G$ is equivalent to that of the initial group. 
Morevover, as the return probability increases when going to a subgroup, Theorem \ref{thm-embedding} imply that, when looking for lower bounds on the return probability for metabelian groups, we can reduce to the split case.

Therefore, it will be enough to prove Theorem  \ref{thm-lower bound torsion} in the split case, namely for a finitely generated metabelian group $G$ of the form $G = M \rtimes \Z^d$, with $M$ abelian. Our goal is then to produce sequences of F\o lner couples for such a group $G$ so as to apply Corollary \ref{cor-lower bound}. These sequences will take the following form.

\begin{defn}
Let $G = A \rtimes B$ be a finitely generated semi-direct product. Equip $G$ with a generating set $S = S_A \sqcup S_B$ where $S_A \subset A$ generates $A$ as a $\Z B$-module and $S_B \subset B$ generates $B$.
We should say that $G$ admits a sequence of \emph{split} F\o lner couples if it has a sequence of F\o lner couples of the form $(\Omega_m \times F_m, \Omega'_m \times F'_m)_m$, with $\Omega_m, \Omega'_m \subset A$ and $F_m, F'_m \subset B$. 
\end{defn}

\begin{rem}
In the previous definition, the sequence $(F_m, F_m')_m$ is as well a sequence of F\o lner couples for $B$.
\end{rem}

\subsubsection{When $M$ is a ring of positive prime characteristic}\label{ssec-ring}~

We deal first with the case of $M$ being a ring $\Z \Z^d/ \PP$ of positive characteristic, with $\PP$ a prime ideal. 
Hence, in this part, $M$ is $\mathbb F_p[X_1^{\pm 1}, \dots, X_d^{\pm 1}]/\mathcal{P}$, for some prime $p$ and some prime ideal $\PP$. Denote by $\pi$ the canonical projection
\begin{equation*}
\pi: \mathbb F_p[X_1^{\pm 1}, \dots, X_d^{\pm 1}] \rightarrow \mathbb F_p[X_1^{\pm 1}, \dots, X_d^{\pm 1}]/\mathcal{P}.
\end{equation*}
and by $B_m$ the polynomials of degree bounded by $m$, that is
\begin{equation}\label{eq-Bm}
B_m = \{ P \in \mathbb F_p[X_1^{\pm 1}, \dots, X_d^{\pm 1}] \mid \supp(P) \subset \llbracket-m, m\rrbracket^d \}.
\end{equation}
Note that $B_m$, as well as $\pi(B_m)$ are abelian groups. 

\medskip

The following proposition is straightforward.

\begin{prop}\label{prop-folner seq ring p}
The sequence 
\begin{equation*}
(\OO_m, \OO_m' )_m = ( \pi(B_{2m}) \rtimes \llbracket-2m, 2m\rrbracket^d, \pi(B_{2m}) \rtimes \llbracket-m, m\rrbracket^d)_m
\end{equation*}
is a sequence of split F\o lner couples adapted to $\mathcal{V}(m) = \# B_{2m} \leq C\exp(Cm^d)$.
\end{prop}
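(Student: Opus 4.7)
The plan is to verify directly the three conditions defining an adapted sequence of F\o lner couples. I would work in $G = M \rtimes \Z^d$ with the symmetric generating set $S = S_A \sqcup S_B$, where $S_B = \{\pm e_1, \dots, \pm e_d\}$ are the standard generators of $\Z^d$, and $S_A = \{\pm \pi(1)\}$. This last choice exploits the fact that $M = \mathbb F_p[X_1^{\pm 1}, \dots, X_d^{\pm 1}]/\PP$ is a cyclic $\Z\Z^d$-module generated by $\pi(1)$, so that the action of $q \in \Z^d$ on this generator yields a single monomial $\pi(X^{q})$.

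Conditions (1) and (3) reduce to elementary counting. Since $\OO_m$ and $\OO'_m$ share the same first factor $\pi(B_{2m})$,
\[
\frac{\#\OO'_m}{\#\OO_m} = \frac{(2m+1)^d}{(4m+1)^d} \longrightarrow 2^{-d} > 0,
\]
which gives (1). For (3), the surjection $B_{2m} \twoheadrightarrow \pi(B_{2m})$ gives $\#\pi(B_{2m}) \leq \#B_{2m} = p^{(4m+1)^d}$, so $\#\OO_m \leq (4m+1)^d\, p^{(4m+1)^d} \leq C\exp(Cm^d)$ for a suitable $C$.

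The heart of the argument is the linearity condition (2), $\OO'_m S^m \subset \OO_m$. Given $(P_0, q_0) \in \OO'_m$ and a word $s_1 \cdots s_k$ of length $k \leq m$, I would track the state $(P_j, q_j)$ inductively using the semi-direct product rule $(P, q)(\tilde s, t) = (P + q \cdot \tilde s,\, q+t)$. The $\Z^d$-component satisfies $q_j \in \llbracket -(m+j), m+j\rrbracket^d \subset \llbracket -2m, 2m\rrbracket^d$, since each step moves it by at most one. The $M$-component receives at each step either $0$ (if $s_{j+1} \in S_B$) or $\pm\pi(X^{q_j})$ (if $s_{j+1} \in S_A$), a monomial whose only exponent is $q_j$. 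Taking any polynomial lift of $P_0 \in \pi(B_{2m})$, we see that a lift of $P_k$ has support in
\[
\supp(P_0) \cup \{q_0, \dots, q_{k-1}\} \subset \llbracket -2m, 2m\rrbracket^d,
\]
so $P_k \in \pi(B_{2m})$ and therefore $(P_k, q_k) \in \OO_m$.

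The only mild obstacle lies in (2): had I chosen generators in $S_A$ with support contained in $\llbracket -R, R\rrbracket^d$ rather than in singletons, the support of each $P_k$ would instead sit in $\llbracket -(2m+R), 2m+R\rrbracket^d$, forcing me to use $\pi(B_{2m+R})$ in place of $\pi(B_{2m})$; this would not affect the asymptotic statement but would clutter the bookkeeping. Cyclicity of $M$ as a $\Z\Z^d$-module, arising from its presentation as $\Z\Z^d/\PP$, is precisely what lets me pick $S_A = \{\pm \pi(1)\}$ and keep the statement as clean as written.
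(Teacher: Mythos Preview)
Your verification is correct and complete. The paper itself omits the proof entirely, declaring the proposition ``straightforward''; your argument is exactly the elementary bookkeeping the authors had in mind, and your observation that the cyclicity of $M$ over $\Z\Z^d$ lets you take $S_A=\{\pm\pi(1)\}$ is precisely what makes condition~(2) come out cleanly with $\pi(B_{2m})$ rather than some shifted $\pi(B_{2m+R})$.
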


\begin{rems}
Obviously, this $\mathcal{V}$ is not optimal, although it gives, by Corollary \ref{cor-lower bound}, a lower bound on the return probability depending only on the rank of $G_{ab}$: 
\begin{equation*}
p_{2n}^G \succsim \exp(-n^\frac{d}{d+2}).
\end{equation*}

The following lemma aims to improve it, making use of Noether's normalization theorem \ref{thm-noether} to control the size.
\end{rems}

\begin{lem}\label{lem-folner seq for ring p}
There exists a constant $C$ such that
\begin{equation}\label{eq-pi(B_m) dim k}
\# \pi(B_m) \leq C\exp(Cm^k),
\end{equation}
where $k$ is the Krull dimension of $\mathbb F_p[X_1^{\pm 1}, \dots, X_d^{\pm 1}]/\mathcal{P}$.
\end{lem}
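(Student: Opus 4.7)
The plan is to apply Noether's normalization (Theorem~\ref{thm-noether}) to the ring $A := \mathbb{F}_p[X_1^{\pm 1}, \dots, X_d^{\pm 1}]/\mathcal{P}$. Since $A$ is a finitely generated $\mathbb{F}_p$-algebra and $\mathbb{F}_p$ is a field, the remark following that theorem yields algebraically independent elements $z_1, \dots, z_k$ in $A$ and finitely many generators $e_1, \dots, e_N$ such that $A = \sum_{\ell=1}^N \mathbb{F}_p[z_1, \dots, z_k]\, e_\ell$, with $k = \krull(A)$. Since $\#\pi(B_m) = p^{\dim_{\mathbb{F}_p} \pi(B_m)}$, it suffices to prove that $\dim_{\mathbb{F}_p} \pi(B_m) = O(m^k)$.

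The heart of the argument is to control the degree in the $z_i$ needed to express $\pi(X^\alpha)$ in the module basis $(e_\ell)$ for $\alpha \in \llbracket -m, m \rrbracket^d$. Since each $\pi(X_i)$ is a unit in $A$, one can fix expressions
\begin{equation*}
\pi(X_i)\, e_j = \sum_\ell P_{ij\ell}(z)\, e_\ell, \qquad \pi(X_i)^{-1} e_j = \sum_\ell Q_{ij\ell}(z)\, e_\ell, \qquad 1 = \sum_\ell S_\ell(z)\, e_\ell,
\end{equation*}
and let $D_0$ be the maximum total degree occurring among the finitely many polynomials $P_{ij\ell}, Q_{ij\ell}, S_\ell$. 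A straightforward induction on $|\alpha_1| + \dots + |\alpha_d|$, in which each multiplication by $\pi(X_i)^{\pm 1}$ increases the degree of the coefficient polynomials by at most $D_0$, shows that for every $\alpha \in \mathbb{Z}^d$ and every $j$,
\begin{equation*}
\pi(X^\alpha)\, e_j = \sum_\ell R_{\alpha,j,\ell}(z)\, e_\ell \quad \text{with} \quad \deg R_{\alpha,j,\ell} \leq D_0 \bigl(|\alpha_1| + \dots + |\alpha_d|\bigr).
\end{equation*}
Using $1 = \sum_\ell S_\ell(z)\, e_\ell$ to rewrite $\pi(X^\alpha)$ itself in the module basis, one obtains an expansion $\pi(X^\alpha) = \sum_\ell T_{\alpha,\ell}(z)\, e_\ell$ with $\deg T_{\alpha,\ell} \leq C_0 m$ whenever $\alpha \in \llbracket -m, m\rrbracket^d$, for some constant $C_0$ depending only on $D_0$ and $d$.

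By linearity, every element of $\pi(B_m)$ lies in the $\mathbb{F}_p$-linear span of the set $\{ z^\beta e_\ell : \beta \in \mathbb{N}^k,\ |\beta|_1 \leq C_0 m,\ 1 \leq \ell \leq N \}$, whose cardinality is $N \cdot \binom{C_0 m + k}{k} = O(m^k)$. Hence $\dim_{\mathbb{F}_p} \pi(B_m) = O(m^k)$, which gives $\# \pi(B_m) \leq C \exp(C m^k)$ as desired. The main obstacle is the linear degree bound in the second step: it crucially uses both that the $\pi(X_i)$ are invertible in $A$ (so that $\pi(X_i)^{-1}$ admits a bounded-degree expansion in the $z_j$) and the finite-module structure coming from Noether's normalization, which together convert the combinatorial filtration by support on the Laurent polynomial ring into a filtration by polynomial degree on $\mathbb{F}_p[z_1, \dots, z_k]$ in a controlled way.
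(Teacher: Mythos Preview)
Your argument is correct and follows essentially the same approach as the paper's proof: apply Noether normalization over $\mathbb{F}_p$ to obtain algebraically independent $z_1,\dots,z_k$ over which $A$ is a finite module, then show that expressing $\pi(X^\alpha)$ in terms of the module generators requires coefficient polynomials in the $z_j$ of degree $O(m)$, yielding a $O(m^k)$ bound on $\dim_{\mathbb{F}_p}\pi(B_m)$. The only cosmetic difference is that the paper phrases the degree control via the integrality relations for the $X_i^{\pm 1}$ (reducing high powers of each $X_i$ to combinations of $1,X_i,\dots,X_i^{D-1}$ with coefficients of linearly growing support), whereas you work directly with abstract module generators $e_\ell$ and the matrices of multiplication by $X_i^{\pm1}$; these are two packagings of the same computation. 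One terminological slip: you call $(e_\ell)$ a ``module basis'', but $A$ need not be free over $\mathbb{F}_p[z_1,\dots,z_k]$; your argument only uses that the $e_\ell$ generate, so this is harmless.
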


\begin{proof}
By Noether's normalization theorem \ref{thm-noether}, there exists elements $z_1, \dots, z_k$ in ${\mathbb F_p[X_1^{\pm 1}, \dots, X_d^{\pm 1}]/\mathcal{P}}$ giving it the structure of a finitely generated module over its subring $\mathbb F_p[z_1, \dots, z_k]$. Hence, each element $X_i^{\pm 1}$ is integral over $\mathbb F_p[z_1, \dots, z_k]$ and is a root of a monic polynomial with coefficients in $\mathbb F_p[z_1, \dots, z_k]$. We may assume that these polynomials have the same degree $D$. Let $N$ be such that the support of all coefficients appearing in these polynomials lies in $\llbracket-N, N\rrbracket^k$.

Then, for $X$ being one of the $X_i^{\pm 1}$, we may write each power $X^{D + r}$ as a polynomial in $X, \dots X^{D-1}$, whose coefficients have support inside $\llbracket-rN, rN\rrbracket^k$.

Therefore, we can write any element of $\pi(B_m)$ as a linear combination of monomials $X_1^{n_1}, \dots X_d^{n_d}$ with $(n_1, \dots, n_d)\in [-D, D]^d$ and coefficients which are elements of $\mathbb F_p[z_1, \dots, z_k]$, with support in $\llbracket-Cm, Cm\rrbracket^k$, for some constant $C$.
\end{proof}

\subsubsection{F\o lner sequences for extensions}

In this part, we recall and prove Proposition \ref{prop-split torsion} stated in the introduction.

\begin{prop3}\label{prop-torsion split}
Let $G$ be a finitely generated split metabelian group, whose derived subgroup is torsion. Assume $G$ has Krull dimension $k \geq 1$. Then
\begin{equation*}
p_{2n} \sim \exp(-n^\frac{k}{k+2}).
\end{equation*}
\end{prop3}

\begin{proof}
As mentionned at the beginning of this subsection, we may assume that $G$ fits inside an exact sequence such as \ref{eq-exseq 5}, with torsion-free quotient.

The upper bound comes from Proposition \ref{prop-split case}. We are left with the lower bound: it is enough (see Corollary \ref{cor-lower bound}) to show that $G$ admits a sequence of F\o lner couples adapted to $\mathcal V(m) = C\exp(Cm^k)$, for some $C$.

The proof of this fact uses recurrence along the decomposition \ref{eq-dec M} of the subgroup $M$ as an increasing sequence of submodules. Initialization is given by Proposition \ref{prop-folner seq ring p} and Lemma \ref{lem-folner seq for ring p}, and iteration follows from the next lemma.
\end{proof}

\begin{lem}\label{lem-remonte fp t}
Let $G = M \rtimes \Z^d$ be a split metabelian group, with $M$ a torsion $\Z\Z^d$-module of Krull dimension at most $k$.

Assume that
\begin{enumerate}
\item there exists a submodule $M_1$ in $M$ such that $M_1 \rtimes \Z^d$ admits a split sequence of F\o lner couples  $(\OO_m \rtimes \llbracket-2m, 2m\rrbracket^d, \OO_m' \rtimes \llbracket-m, m \rrbracket^d)_m$ adapted to $\mathcal{V}(m) =  C\exp(Cm^{k})$, where $\OO_m$ and $\OO_m'$ are abelian subgroups of $M_1$, 
\item $M/M_1$ is a ring of the form $\Z(\Z^d)/ \mathcal{P}$, with $\PP$ a prime ideal.
\end{enumerate}

Then, the group $G$ admits a split sequence of F\o lner couples adapted to a function $\mathcal{V'}(m) = C'\exp(C'm^k)$, of the form $(\GGG_m \rtimes \llbracket-2m, 2m\rrbracket^d, \GGG_m' \rtimes \llbracket-m, m \rrbracket^d)_m$. Moreover, the projections $\GGG_m, \GGG'_m$ on $M$ are abelian subgroups.
\end{lem}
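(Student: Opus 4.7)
The plan is to build the desired F\o lner couples in $G$ by combining the given couple $(\OO_m,\OO_m')$ in $M_1\rtimes\Z^d$ with a ring-style subgroup of $M$ that lifts the F\o lner set for the ring quotient $M/M_1$ already produced in Proposition~\ref{prop-folner seq ring p} and Lemma~\ref{lem-folner seq for ring p}. I will assemble $\GGG_m$ as an internal sum $\OO_m+\tilde B_{2m}$ inside $M$, where $\tilde B_{2m}\subset M$ is the image of the polynomial ball $B_{2m}$ against a chosen lift $f\in M$ of the generator of $M/M_1$.

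First, I fix a lift $f\in M$ of $1\in M/M_1=\Z\Z^d/\PP$ and a $\Z\Z^d$-module generating set $S_{M_1}$ of $M_1$, and take $S:=S_{M_1}\sqcup\{f\}\sqcup S_{\Z^d}$ as a generating set of $G$. By stability of F\o lner sequences under change of finite generating set I may assume the given couple $(\OO_m,\OO_m')$ is built with respect to the restricted set $S_{M_1}\cup S_{\Z^d}$. Set $\tilde B_m:=B_m\cdot f\subset M$ with $B_m$ the polynomial ball of equation~(\ref{eq-Bm}); this is an additive subgroup of $M$, equal to the image of $B_m$ under the composition $\Z\Z^d\twoheadrightarrow\Z\Z^d/\Ann(f)\simeq\Z\Z^d\cdot f\subset M$. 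Since the cyclic submodule $\Z\Z^d\cdot f$ of $M$ is Noetherian of Krull dimension at most $\krull(M)\le k$, the same Noether-normalization argument as in the proof of Lemma~\ref{lem-folner seq for ring p} — reduced to the prime case via the filtration of Proposition~\ref{prop-dec M} — yields $\#\tilde B_m\le C\exp(Cm^k)$.

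I then set $\GGG_m:=\OO_m+\tilde B_{2m}$ and $\GGG_m':=\OO_m'+\tilde B_{2m}$, two abelian subgroups of $M$. The upper bound $\#\GGG_m\le\#\OO_m\cdot\#\tilde B_{2m}\le C'\exp(C'm^k)$ is immediate. For the comparability $\#\GGG_m'\ge c_0\#\GGG_m$, writing $K:=M_1\cap\tilde B_{2m}$ one has $\#\GGG_m=\#\OO_m\cdot\#\tilde B_{2m}/\#(\OO_m\cap K)$ (and likewise for $\GGG_m'$), and the inclusion $\OO_m'\cap K\subset\OO_m\cap K$ reduces the ratio to $\#\OO_m'/\#\OO_m$, already controlled by the F\o lner hypothesis in $M_1\rtimes\Z^d$. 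For the linear-inside condition, take $(a,v)\in\GGG_m'\rtimes\llbracket-m,m\rrbracket^d$, decompose $a=a_1+r$ with $a_1\in\OO_m'$ and $r\in\tilde B_{2m}$, and multiply by a length-$\le m$ word in $S$. The $\Z^d$-coordinate ends in $\llbracket-2m,2m\rrbracket^d$ by construction; the $M$-coordinate equals $a+\sum_i w_ib_i$ with $w_i\in\llbracket-2m,2m\rrbracket^d$ the $\Z^d$-position at step $i$ and $b_i\in S_M$. Splitting this sum by whether $b_i\in S_{M_1}$ or $b_i=f$: the $S_{M_1}$-contributions together with $a_1$ compute exactly what the associated length-$\le m$ word in $M_1\rtimes\Z^d$ (obtained by erasing the $f$-letters while keeping the $\Z^d$-letters) applies to $(a_1,v)$, hence lie in $\OO_m$ by the F\o lner hypothesis on $(\OO_m,\OO_m')$; the $f$-contributions give $\sum X^{w_i}f\in B_{2m}\cdot f=\tilde B_{2m}$, which added to $r$ stays in the subgroup $\tilde B_{2m}$. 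Consequently $a+\sum_i w_ib_i\in\OO_m+\tilde B_{2m}=\GGG_m$.

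The main obstacle will be the size bound $\#\tilde B_m\le C\exp(Cm^k)$ precisely when the short exact sequence $0\to M_1\to M\to M/M_1\to 0$ does not split as $\Z\Z^d$-modules: then $\Ann(f)\subsetneq\PP$ and $\tilde B_m$ is strictly larger than its projection $\pi(B_m)\subset M/M_1$, so one cannot simply transport the ring-case count. The rescue is that $\tilde B_m$ still sits inside the cyclic submodule $\Z\Z^d\cdot f$ of $M$, itself a Noetherian $\Z\Z^d$-module of Krull dimension at most $k$, so the Noether-normalization argument of Lemma~\ref{lem-folner seq for ring p} continues to govern its size via the prime filtration of $\Z\Z^d/\Ann(f)$.
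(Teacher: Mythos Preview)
Your construction is exactly the paper's: lift the unit of $M/M_1$ to an element of $M$ (your $f$, the paper's $1_Q$), form the cyclic submodule $M_2=\Z\Z^d\cdot f\simeq\Z\Z^d/\Ann(f)$, let $\Lambda_m$ (your $\tilde B_{2m}$) be the image of the polynomial ball in $M_2$, and set $\Gamma_m=\Omega_m+\Lambda_m$, $\Gamma_m'=\Omega_m'+\Lambda_m$. Your verifications of the ratio condition and of $\Gamma_m'\cdot S^m\subset\Gamma_m$ are the same as the paper's (your index computation is precisely the paper's stated ``Fact'' $[\Omega X:\Omega' X]\le[\Omega:\Omega']$; your word-splitting argument for the inclusion is spelled out in more detail than the paper's one-line ``$F_m'S^m\subset F_m$'').

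The one substantive divergence is the size estimate $\#\tilde B_m\le C\exp(Cm^k)$. The paper does \emph{not} filter $M_2$; it instead observes that $M_2$ has characteristic some $n$ with $p\mid n$ (using that $M$ is torsion), applies the Noether-normalization count of Lemma~\ref{lem-folner seq for ring p} to the domain $M/M_1$, and then transfers the bound back to $M_2$ by arguing that the projection $M_2\twoheadrightarrow M/M_1$ has finite fibres. Your route---take a prime filtration $0=N_0\subset\cdots\subset N_r=\Z\Z^d/\Ann(f)$ and apply Lemma~\ref{lem-folner seq for ring p} to each $N_{j+1}/N_j$---has a real gap: to multiply the bounds you must control $\#\bigl((\tilde B_m\cap N_{j+1})/(\tilde B_m\cap N_j)\bigr)$, and this quotient embeds in $N_{j+1}/N_j\simeq\Z\Z^d/\PP_j$ but is \emph{not} a priori the image of any polynomial ball under a ring map, so Lemma~\ref{lem-folner seq for ring p} does not apply to it directly. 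The filtration of Proposition~\ref{prop-dec M} organises the Krull dimension but does not by itself organise the growth of $\tilde B_m$.

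A clean way to close this (and what your final paragraph is gesturing towards) is to run Noether normalization on $M_2$ itself rather than on the graded pieces: write $n=\prod p_i^{a_i}$, split $M_2$ by CRT, and for each factor pass to the $\mathbb F_{p_i}$-algebra $M_2/p_iM_2$ (Krull dimension unchanged since $p_iM_2$ is nilpotent), normalize there, and lift the module-finiteness back to $M_2$ using $p_i^{a_i}=0$. This gives integrality of the $X_j^{\pm1}$ over a polynomial subring in $k$ variables with coefficients in $\Z/n\Z$, after which the count of Lemma~\ref{lem-folner seq for ring p} goes through verbatim.
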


\begin{proof}
We have $M/M_1 = \Z\Z^d/ \PP$ for some prime ideal $\PP$, and this module has dimension at most $k$. Its characteristic is a prime $p$. 
Let $1_Q$ be a lift of the unit of $M/M_1$ in $M$. The $\Z\Z^d$-module $M_2$ generated by $1_Q$ surjects onto $M/M_1$, and is of the form $\Z\Z^d / I$, for some ideal $I \subset \PP$. Because $M$ is a torsion module, $M_2$ is actually a ring of characteristic $n$, for some multiple $n$ of $p$. 

We cannot apply directly Noether's normalization theorem to $M_2$. To get around this, note that the projection $M_2 \twoheadrightarrow M/M_1$ have finite fibres of cardinality $\frac{n}{p}$. Hence, we can do as in part \ref{ssec-ring}: consider the finite subgroup $B_m$ in $\Z\Z^d$ (defined in \ref{eq-Bm}). Noether's normalization theorem provides us with an upper bound on the cardinality of its projection  $\pi(B_m)$ into $M/M_1$, where $\pi: \Z\Z^d \twoheadrightarrow M/M_1$. Let $\LLL_m$ be the pullback of $\pi(B_{2m})$ in $M_2$. It is also the projection of $B_{2m}$ in $M_2 = \Z\Z^d/I$. Because the fibres are finite, the cardinality of $\LLL_m$ satisfies \ref{eq-pi(B_m) dim k}, for some constant $C_1$ :
\begin{equation*}
\# \LLL_m \leq C_1\exp(C_1m^k) = \mathcal{V}_1(m).
\end{equation*}
Note that $\LLL_m$ is an abelian subgroup of $M$.

We fix a generating set $S$ for $G$ as follows: $$S = S_1 \cup \{(\pm \mathrm 1_Q, 0) \},$$ such that $S_1 = S_1' \cup \{ (0, \pm e_1), \dots (0, \pm e_d) \}$ with $S_1'$ a finite and symmetric generating set for the finitely generated $\Z(\Z^d)$-module $M_1$ and $(e_i)_{i=1}^d$ being the canonical basis of $\Z^d$. We choose $S_1$ so as to be compatible with the sequence of F\o lner couples of $M_1 \rtimes \Z^d$.

Consider the following sequence of couples indexed by $m \in \mathbb N$ :
\begin{equation*}
F_m = \{ P.\mathrm 1_Q  + \omega \mid P \in \LLL_m, \omega \in \OO_m  \} \rtimes \llbracket-2m, 2m\rrbracket^d
\end{equation*}
\begin{equation*}
F_m' = \{ P.\mathrm 1_Q  + \omega \mid P \in \LLL_m, \omega \in \OO_m'  \}\rtimes \llbracket-m, m\rrbracket^d .
\end{equation*}
We claim that this is a sequence of F\o lner couples for the group $G$, adapted to  $\mathcal{V'}(m) = C'\exp(C'm^k)$, for some constant $C'$. Note that the projections on $M$ are again abelian subgroups.
Indeed, $\# F_m \leq \mathcal V_1(m)\#\OO_m(4m + 1)^d \leq C'\exp(C'm^k)$. The choice of a sequence of F\o lner couples for $\OO_m$ and $\OO_m'$ implies $\# \OO_m' \geq c \# \OO_m,$ and that $F_m' S^m \subset F_m$.

The last condition, namely  $\frac{\# F_m'}{\# F_m} \geq c$, follows from the following straightforward fact, applied to $H = M$, $\Omega = \Omega_m, \Omega' = \Omega_m$ and  $X = \Lambda_m$.

\begin{fac}
Let $H$ be a group and $\Omega'\subset \Omega$ be two subgroups of $H$. Assume that $X$ is a subset of $H$ so that $\Omega X$ and $\Omega' X$ are subgroups. Then
\begin{align*}
[\Omega X : \Omega' X] \leq [\Omega : \Omega'].
\end{align*}
\end{fac}
\end{proof}

We may now complete the proof of Theorem \ref{thm-lower bound torsion}.

\begin{proof}[Proof of Theorem \ref{thm-lower bound torsion}]
As the return probability increases when taking subgroups, the theorem follows from Proposition \ref{prop-torsion split} and Theorem \ref{thm-embedding}. 
\end{proof}

\subsection{Proof of Theorem \ref{thm-characterization}}

The proof of Theorem \ref{thm-characterization} requires to manage with the torsion submodule and torsion-free quotient of $M$ so as to deal with both cases. Therefore, Lemma \ref{lem-remonte fp t}, that provides F\o lner couples for extension in the torsion case, will not suffice.

We first prove a proposition stating that admitting F\o lner couples descends to the quotient in finitely generated groups. The fact that it goes to a subgroup is due to Erschler \cite{Erschler06}. 

\begin{prop}\label{prop-folner quotient}
Let $G$ be a fintely generated group and $H$ be a quotient of $G$. 

Assume that $G$ admits a sequence of F\o lner couples adapted to a function $\mathcal V$. Then, so does $Q$. 
\end{prop}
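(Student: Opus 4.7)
I would transport the given F\o lner couples along the canonical projection $\pi \colon G \twoheadrightarrow H$. Fix a finite symmetric generating set $S$ of $G$ adapted to the sequence $(\OO_m, \OO'_m)_m$; then $\bar S := \pi(S)$ is a symmetric generating set of $H$, and every $\bar S$-word of length at most $m$ lifts to an $S$-word of the same length in $G$. The natural candidate in $H$ is
\[
A_m := \pi(\OO_m), \qquad A'_m := \pi(\OO'_m).
\]
The size bound $\#A_m \leq \#\OO_m \leq \mathcal V(m)$ and the linear inclusion $A'_m \bar S^m = \pi(\OO'_m S^m) \subset \pi(\OO_m) = A_m$ are immediate, and $\#A_m \to \infty$ follows from the linear inclusion together with the infiniteness of $H$ (the case where $H$ is finite is vacuous, as the statement concerns the asymptotic behaviour).

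The main obstacle is verifying the proportion $\#A'_m \geq c \#A_m$ with $c > 0$ independent of $m$, since $\pi$ may collapse $\OO_m$ and $\OO'_m$ by very different factors. I would study the fiber counts $\phi(h) := \#(\OO_m \cap \pi^{-1}(h))$ and $\phi'(h) := \#(\OO'_m \cap \pi^{-1}(h))$. For any $\bar s \in \bar S^m$, picking a lift $s \in S^m$, right multiplication by $s$ is an injection $\OO'_m \cap \pi^{-1}(h) \hookrightarrow \OO_m \cap \pi^{-1}(h\bar s)$ thanks to $\OO'_m S^m \subset \OO_m$; this yields the pointwise comparison $\phi'(h) \leq \phi(h \bar s)$ for every $h \in H$ and every $\bar s \in \bar S^m$. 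Summing over $h$ merely recovers $\#\OO'_m \leq \#\OO_m$, but the pointwise inequality is the key input for controlling the proportion.

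If the direct candidate does not suffice, I would replace $A'_m$ by the $\bar S^m$-interior $\widetilde{A}'_m := \{h \in A_m : h \bar S^m \subset A_m\}$, which contains $A'_m$ and automatically satisfies $\widetilde{A}'_m \bar S^m \subset A_m$. Its complement $A_m \setminus \widetilde{A}'_m$ is contained in $\pi(\partial_m \OO_m)$, where $\partial_m \OO_m := \OO_m \setminus \{g \in \OO_m : g S^m \subset \OO_m\}$ is contained in $\OO_m \setminus \OO'_m$ by the linear inclusion, hence has cardinality at most $(1 - c_0)\#\OO_m$. Converting this absolute bound into a multiplicative bound against $\#A_m$ (rather than $\#\OO_m$) is the hardest step I anticipate; it may require passing to a subsequence, or replacing $\mathcal V$ by an asymptotically equivalent function of the same form, which is harmless for the applications to return probability via Corollary \ref{cor-lower bound}.
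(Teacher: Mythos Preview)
Your diagnosis of the difficulty is exactly right, and in fact your pointwise inequality $\phi'(h)\le\phi(h\bar s)$ for all $\bar s\in\bar S^m$ is the correct key input. What is missing is the extraction step that turns this $L^1$-type control into a genuine pair of sets satisfying the proportion condition; neither the direct images $(\pi(\OO_m),\pi(\OO'_m))$ nor the interior construction $\widetilde A'_m$ can be made to work in general, because the bound $\#(A_m\setminus\widetilde A'_m)\le(1-c_0)\#\OO_m$ compares against $\#\OO_m$ and there is no mechanism preventing $\#A_m/\#\OO_m\to 0$ along every subsequence.

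The paper resolves this via a functional reformulation. One pushes forward the indicator $f_m=1_{\OO'_m}$ to $\widehat f_m(q)=\sum_{g\in\pi^{-1}(q)}f_m(g)$, which is precisely your fibre count $\phi'$. Your pointwise inequality then reads as a gradient bound $\lVert\lvert\widehat\nabla_m\widehat f_m\rvert\rVert_1\le\lVert\widehat f_m\rVert_1$. The coarea formula
\[
\tfrac12\int_0^\infty \#\partial_m\{\widehat f_m>t\}\,dt\ \le\ \lVert\lvert\widehat\nabla_m\widehat f_m\rvert\rVert_1,\qquad
\int_0^\infty \#\{\widehat f_m>t\}\,dt\ =\ \lVert\widehat f_m\rVert_1,
\]
then yields, for each $m$, a threshold $t$ with $\#\partial_m\{\widehat f_m>t\}\le C\,\#\{\widehat f_m>t\}$. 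Taking $\widehat\OO'_m=\{\widehat f_m>t\}$ and $\widehat\OO_m=\widehat\OO'_m\cup\partial_m\widehat\OO'_m$ gives the required F\o lner couple in $H$, with $\#\widehat\OO_m\le\#\pi(\OO'_m)\le\mathcal V(m)$. This level-set trick is the missing idea in your sketch; it is what converts the ``absolute'' boundary bound you obtained into the multiplicative one you need, without any subsequence or modification of $\mathcal V$.
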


\begin{proof}
Equip $G$ with a finite and symmetric generating set $S$ and $H$ with the projection $T$ of $S$ to the quotient.

Denote by $(\Omega_n, \Omega'_n)_n$ the given sequence of F\o lner couples. If $h$ is a function from a discrete group $K$ to $\R$,  $\lVert h \rVert_1 = \sum_{k\in K}\lvert h(k)\rvert$ designates its $L^1$-norm.

Set $f_n = 1_{\Omega'_n}$ and $\lvert\nabla_nf_n\rvert(g) = \sup_{x\in S^n} \lvert f_n(g) -f_n(gx)\rvert$. The latter gradient measures the variations of $f_n$ along $S^n$.
We have : $$\lVert \lvert\nabla_nf_n\rvert \rVert_1 \leq \lVert f_n \rVert_1.$$
Let $\widehat{f_n}$ be the function from $Q$ to $\R$ defined by
\begin{align*}
\widehat{f_n}(q) = \sum_{g \in \pi^{-1}(q)} f(g).
\end{align*}
The norm is preserved :  $\lVert \widehat{f_n} \rVert_1 = \lVert f_n \rVert_1 = \# \Omega'_n $.
Similarly, consider the gradient $\lvert\widehat \nabla_n\widehat f_n\rvert(q) = \sup_{x\in T^n} \lvert \widehat f_n(q) -\widehat f_n(qx)\rvert$.
Then, we have : $\lVert \lvert\widehat \nabla_n\widehat f_n\rvert \rVert_1 \leq \lVert \lvert\nabla_nf_n\rvert \rVert_1 \leq  \lVert \widehat f_n \rVert_1$.

For a subset $A$ of $G$ denote by $\partial_n A$ the $n$-boundary of $A$, that is : $\partial_n A = AS^n \cap A^C S^n$. The following version of the co-area formula (see for instance \cite{T08} (3.1) for a proof): 
\[ \frac{1}{2}\int_{\R^+} \# \partial_n\{f > t\} \mathrm{dt} \le \lVert \nabla_n f \rVert_1 \le \int_{\R^+} \# \partial_n\{f > t\} \mathrm{dt}, \]
implies that there exist $t \ge 0$ and $C > 0$, such that 
\begin{align*}
\#  (\partial_n \{ \widehat{f_n} > t \})  \leq C \# \{ \widehat{f_n} > t \}.
\end{align*}
Set $\widehat{\Omega'_n} = \{ \widehat{f_n} > t \}$ and $\widehat{\Omega_n} = \widehat{\Omega'_n} \cup \partial_n \widehat\Omega'_n$. We claim that $(\widehat{\Omega_n}, \widehat{\Omega'_n})$ is a sequence of F\o lner couples for $Q$ adapted to $\mathcal V$.
\end{proof}

\begin{rem}
It is possible to elaborate on the technique appearing in this proof and to generalize this lemma to the context of locally compact compactly generated groups. The good setting for this seems to be the theory of isoperimetric profiles of groups through an approach similar to \cite{T08, T13}. This will be the content of a forthcoming paper \cite{J17}.
\end{rem}

\begin{cor}\label{cor-split fp quotient}
Let $G = M \rtimes Q$ be a finitely generated semi-direct product and $H = M'\rtimes Q$, with $M'$ a quotient of $M$ as a $\Z Q$-module.

Assume $G$ admits a sequence of split F\o lner couples of exponential size whose projection onto $Q$ is $(F_n, F_n')_n$, then  $H$ admits a sequence of split F\o lner couples of exponential size that projects onto $(F'_nS_Q^n, F_n')_n$.
\end{cor}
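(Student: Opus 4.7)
The plan is to apply Proposition \ref{prop-folner quotient} to the natural surjection $\pi : G = M \rtimes Q \twoheadrightarrow M' \rtimes Q = H$ (the identity on $Q$ and the module quotient $\pi_M : M \twoheadrightarrow M'$ on the base), while exploiting the product structure of the input F\o lner couple to guarantee that the output is also split. With $f_n := \mathbf{1}_{\Omega'_n \times F'_n}$, the key observation is that its push-down to $H$,
\[
\widehat{f_n}(m', q) = \phi_n(m') \cdot \mathbf{1}_{F'_n}(q), \qquad \phi_n(m') := \#(\pi_M^{-1}(m') \cap \Omega'_n),
\]
is itself a product function on $M' \times Q$. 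Consequently every super-level set $\{\widehat{f_n} > t\}$ factors as $W''_n(t) \times F'_n$, where $W''_n(t) := \{\phi_n > t\}$. The coarea-plus-averaging step in the proof of Proposition \ref{prop-folner quotient} then yields a threshold $t$ for which $\widehat{\Omega'_n} := W''_n(t) \times F'_n$ is an inner F\o lner set of the required split form.

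For the outer set I define $\widehat{\Omega_n} := \pi_M(\Omega_n) \times (F'_n S_Q^n)$, which is split by construction and contains $\widehat{\Omega'_n}$ since $W''_n(t) \subseteq \pi_M(\Omega'_n) \subseteq \pi_M(\Omega_n)$. With $S_H = \pi_M(S_M) \sqcup S_Q$ descending from $S_G = S_M \sqcup S_Q$, the F\o lner inclusion $\widehat{\Omega'_n} \cdot S_H^n \subseteq \widehat{\Omega_n}$ is obtained by lifting: given $(m', q) \in \widehat{\Omega'_n}$ and $s \in S_H^n$, pick $m \in \Omega'_n \cap \pi_M^{-1}(m')$ and $\tilde s \in S_G^n$ lifting $s$; then $(m, q)\tilde s \in \Omega_n \times F_n$, and its image under $\pi$ lies in $\pi_M(\Omega_n) \times (F'_n S_Q^n)$ because the $Q$-component is translated only by $\pi_Q(\tilde s) \in S_Q^n$. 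Exponential size is preserved since $\#\widehat{\Omega_n} \leq \#\Omega_n \cdot \#F_n \leq \mathcal{V}(n)$.

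The main obstacle is the F\o lner density condition $\#\widehat{\Omega'_n} \geq c \#\widehat{\Omega_n}$. The $Q$-contribution $\#F'_n / \#(F'_n S_Q^n) \geq \#F'_n / \#F_n \geq c$ is immediate from the F\o lner property of the $Q$-projection $(F_n, F'_n)$. The $M'$-contribution $\#W''_n(t) / \#\pi_M(\Omega_n)$ is the delicate point, and is precisely where the coarea choice of $t$ is essential: by restricting $t$ to an interval where $\int \#\{\widehat{f_n} > s\}\, \mathrm{d}s$ retains a fixed fraction of $\|\widehat{f_n}\|_1 = \#\Omega'_n \cdot \#F'_n$, and combining with the original F\o lner inequality $\#\Omega'_n \geq c_0 \#\Omega_n$ in $G$, one selects $t$ so that simultaneously $\#\partial_n \widehat{\Omega'_n} \leq C \#\widehat{\Omega'_n}$ and $\#W''_n(t)$ is a positive proportion of $\#\pi_M(\Omega_n) \leq \#\Omega_n$. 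This produces a split F\o lner couple in $H$ of exponential size projecting onto $(F'_n S_Q^n, F'_n)$ in $Q$.
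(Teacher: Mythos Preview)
Your computation of $\widehat{f_n}(m',q)=\phi_n(m')\,\mathbf 1_{F'_n}(q)$ and the conclusion that every super-level set is a product $W''_n(t)\times F'_n$ is exactly the paper's argument, and this is the heart of the corollary.

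Where you diverge is in the choice of the \emph{outer} set. The paper keeps the outer set from Proposition~\ref{prop-folner quotient}, namely $\widehat\Omega_n=\widehat\Omega'_n\cup\partial_n\widehat\Omega'_n$; the density condition $\#\widehat\Omega'_n\geq c\,\#\widehat\Omega_n$ is then \emph{automatic} from the coarea choice of $t$, and the only additional observation is that its $Q$-projection equals $F'_nS_Q^n$ (since $\pi_Q(S_H^n)=S_Q^{\le n}$). You instead take $\widehat\Omega_n=\pi_M(\Omega_n)\times(F'_nS_Q^n)$ and then try to prove density directly. This creates a genuine gap: you need $\#W''_n(t)\geq c\,\#\pi_M(\Omega_n)$, and the sketch you give does not establish it. Knowing $\|\phi_n\|_1=\#\Omega'_n\geq c_0\,\#\Omega_n\geq c_0\,\#\pi_M(\Omega_n)$ does \emph{not} force any level set to be large relative to $\#\pi_M(\Omega_n)$, because $\|\phi_n\|_\infty$ (the maximal fibre size over $\Omega'_n$) is uncontrolled; the layer-cake integral can concentrate all its mass on a few very tall fibres. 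Concretely, nothing prevents $\#\pi_M(\Omega'_n)\ll\#\pi_M(\Omega_n)$ even under the F\o lner hypothesis, and $W''_n(t)\subset\pi_M(\Omega'_n)$ for every $t\geq 0$.

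Note also that once you fix the outer set as $\pi_M(\Omega_n)\times(F'_nS_Q^n)$, the boundary inequality $\#\partial_n\widehat\Omega'_n\leq C\,\#\widehat\Omega'_n$ is no longer needed at all; requiring it ``simultaneously'' with the size condition only makes your task harder and suggests you are mixing two different strategies. The simplest repair is to revert to the paper's outer set: use the coarea-selected $t$ to get $\#\partial_n(W''_n(t)\times F'_n)\leq C\,\#(W''_n(t)\times F'_n)$, set $\widehat\Omega_n=\widehat\Omega'_n\cup\partial_n\widehat\Omega'_n$, and read off that its $Q$-projection is $F'_nS_Q^n$.
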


\begin{proof}
Write $(\OO_n, \OO'_n) = (A_n \times F_n, A'_n \times F'_n)_n$ for the sequence of split F\o lner couples of $G$ and denote by $\pi$ the projection of the $\Z Q$-module $M$ onto $M'$. Coming back to the proof of the proposition above, a computation gives :  for any $(m', q) \in H$, $\widehat{f_n}(m', q) = \# (A_n' \cap \pi^{-1}(m'))$. Hence, $\widehat{\OO'_n} = \widehat{A'_n} \times F_n'$ for some $\widehat{A'_n} \subset \pi(A'_n)$ and by construction the projection onto $Q$ of $\widehat{\OO_n}$ is $F_n'S_Q^n$.
\end{proof}

The next lemma will allow to combine F\o lner couples along the decompostion of a finitely generated module. 

\begin{lem}\label{lem-lifting}
Let $Q$ be a finitely generated abelian group and $M$ be a finitely generated $\Z Q$-module. Let $S_Q$ be a finite generating set for $Q$ and $S_M$ be a finite generating set for the $\Z Q$-module $M$. Assume that
\begin{enumerate}
\item $M$ has  submodules $M_1$ and $M_2$ so that $M_2$ is a cyclic $\Z Q$-module and its projection onto $M/M_1$ is the whole of $M/M_1$.
\item The group $M_1 \rtimes Q$ admits a sequence of F\o lner couples of exponential size that projects onto a sequence $(F_n, F'_n)_n$ in $Q$.
\item The group  $M_2 \rtimes Q$ admits a sequence of split F\o lner couples of exponential size that  projects onto a sequence $(G_n, F'_n)_n$ in $Q$ with $F_n \subset G_n$ for any $n$.
\end{enumerate}
Then, the group $M \rtimes Q$ admits a sequence of F\o lner couples of exponential size that projects onto the  sequence $(F'_n S_Q^n, F'_n)_n$ in $Q$.
\end{lem}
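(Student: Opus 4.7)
The plan is to combine the given F\o lner couples of $M_1 \rtimes Q$ and $M_2 \rtimes Q$ fiberwise over $Q$, exploiting $M = M_1 + M_2$. I may enlarge $S_M$ so that it contains $S_{M_1}$ and a $\Z Q$-module generator $1_{M_2}$ of the cyclic module $M_2$. The natural candidates in $M \rtimes Q$ are
\begin{align*}
\Lambda'_n &= \{(m_1 + m_2,\, q) : (m_1, q) \in \Omega'_n,\ m_2 \in \Theta'_n\}, \\
\Lambda_n &= \{(m_1 + m_2,\, q) : (m_1, q) \in \Omega_n,\ m_2 \in \Theta_n,\ q \in F'_n S_Q^n\}.
\end{align*}
By construction, $\pi_Q(\Lambda'_n) = F'_n$ and $\pi_Q(\Lambda_n) = F'_n S_Q^n$, and the outer size is at most $|\Omega_n|\cdot|\Theta_n|$, which is exponential.

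The F\o lner property $\Lambda'_n \cdot S^n \subset \Lambda_n$ I verify case by case on the generators. A generator $(s, e)$ with $s \in S_{M_1}$ sends $(m_1 + m_2, q)$ to $((m_1 + qs) + m_2, q)$, and $(m_1 + qs, q) \in \Omega_n$ by the F\o lner property of $(\Omega_n, \Omega'_n)$ for $M_1 \rtimes Q$. The generator $(1_{M_2}, e)$ sends it to $(m_1 + (m_2 + q\cdot 1_{M_2}), q)$, and $m_2 + q\cdot 1_{M_2} \in \Theta_n$ by the F\o lner property of $(\Theta_n \times G_n, \Theta'_n \times F'_n)$ for $M_2 \rtimes Q$, the hypothesis $F'_n \subset G_n$ keeping $q$ in the permitted range. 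A generator $(e, s)$ with $s \in S_Q$ sends it to $(m_1 + m_2, qs)$, and $(m_1, qs) \in \Omega_n$ by the F\o lner property in the $S_Q$-direction; iterating this yields the inclusion $F'_n S_Q^n \subset F_n$, ensuring consistency.

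The main obstacle is the lower bound on the F\o lner ratio $|\Lambda'_n|/|\Lambda_n|$, since the parameterization $\Omega'_n \times \Theta'_n \to \Lambda'_n$, $((m_1, q), m_2) \mapsto (m_1 + m_2, q)$, need not be injective when $M_1 \cap M_2 \neq \{0\}$. I circumvent this by factoring through the direct sum: the $\Z Q$-linear map $(m_1, m_2) \mapsto m_1 + m_2$ induces a group surjection $\phi \colon (M_1 \oplus M_2) \rtimes Q \twoheadrightarrow M \rtimes Q$ that is the identity on $Q$. In $(M_1 \oplus M_2) \rtimes Q$, the analogous sets
\begin{align*}
\Xi'_n &= \{((m_1, m_2),\, q) : (m_1, q) \in \Omega'_n,\ m_2 \in \Theta'_n\}, \\
\Xi_n &= \{((m_1, m_2),\, q) : (m_1, q) \in \Omega_n,\ m_2 \in \Theta_n,\ q \in F'_n S_Q^n\}
\end{align*}
satisfy the same F\o lner verification, but now with unique decomposition; thus $|\Xi'_n| = |\Omega'_n|\cdot|\Theta'_n|$, $|\Xi_n| \leq |\Omega_n|\cdot|\Theta_n|$, and the F\o lner ratio is at least $(|\Omega'_n|/|\Omega_n|)(|\Theta'_n|/|\Theta_n|) \geq c > 0$. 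Applying Proposition~\ref{prop-folner quotient} to $\phi$ transfers this F\o lner sequence to $M \rtimes Q$, preserving exponential size and F\o lner ratio; since $\phi$ is the identity on the $Q$-component, the $Q$-projections of the pushed-forward couples remain within $F'_n$ (inner) and $F'_n S_Q^n$ (outer), and equality is arranged by the standard level-set-and-boundary construction of the proof of Proposition~\ref{prop-folner quotient}.
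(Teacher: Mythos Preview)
Your argument is correct and follows essentially the same route as the paper: the paper also passes to $H=(M_1\times M_2)\rtimes Q$ with diagonal $Q$-action, forms the product couples $\Delta_n=\Omega_n\times\Lambda_n$, $\Delta'_n=\Omega'_n\times\Lambda'_n$ there (your $\Xi_n,\Xi'_n$), and then pushes them to $M\rtimes Q$ via the surjection, invoking Corollary~\ref{cor-split fp quotient} where you invoke Proposition~\ref{prop-folner quotient}. Your preliminary direct construction in $M\rtimes Q$ is extra exposition---the paper skips it and goes straight to the direct sum---but the core idea and its resolution are the same.
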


\begin{proof}
The module $M_1\times M_2$ surjects onto $M$, hence the group $H = (M_1 \times M_2)\rtimes Q$, where $Q$ acts diagonally, surjects onto $M \rtimes Q$. By Corollary \ref{cor-split fp quotient}, it is enough to construct F\o lner pairs for $H$, whose projection onto $Q$ is $(F_n, F'_n)_n$.

Write $(\Omega_n, \Omega'_n)_n$, resp. $(\Lambda_{n} \times G_n, \Lambda'_{n} \times F'_n)_n$ the sequence of F\o lner couples of $M_1 \rtimes Q$, resp. $M_2\rtimes Q$, and set for all $n$
\begin{align*}
\Delta_n  = \Omega_n \times \Lambda_n,~~~~~~~~ \Delta'_n  = \Omega'_n \times \Lambda'_n. 
\end{align*}

The sequence $(\Delta_n, \Delta'_n)_n$ is a sequence of F\o lner couples of exponential size, that projects onto the sequence $(F_n, F'_n)_n$ in $Q$. 
\end{proof}

The proof of Theorem \ref{thm-characterization} makes use the following ingredient of the proof of the main theorem in Pittet and Saloff-Coste \cite{PSC03}.

\begin{prop}[\cite{PSC03}, Proposition $7.8$]\label{thm-psc}
Let $\Gamma$ be a finitely generated torsion-free soluble group of finite Prüfer rank satisfying
\[ \exseq{N}{\Gamma}{A}, \]
with $N$ nilpotent and torsion-free and $A$ free abelian of finite rank $d$.

Then, $\Gamma$ admits a sequence of F\o lner couples of exponential size. If the extension splits, so does the F\o lner sequence. Moreover, the projection onto $A$ of this sequence  is the classical F\o lner sequence $ (\llbracket -2n, 2n \rrbracket^d, \llbracket -n, n \rrbracket^d)_n$ of F\o lner couples of $A$.
\end{prop}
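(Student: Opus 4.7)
The plan is to build the F\o lner couples by hand, exploiting the fact that a finitely generated torsion-free soluble group of finite Prüfer rank is rigid enough to be treated by Lie-theoretic methods. First I would embed the torsion-free nilpotent group $N$ into its Mal'cev completion $N_\R$, a simply-connected nilpotent Lie group of dimension equal to the Prüfer rank of $N$; the conjugation action of $A$ on $N$ extends uniquely to an action by Lie-group automorphisms on $N_\R$, with derivative $\alpha : A \to \mathrm{Aut}(\mathfrak n)$ satisfying the a priori exponential bound $\max(\|\alpha_v\|, \|\alpha_v^{-1}\|) \leq e^{K\|v\|}$ for some constant $K$ depending only on a fixed generating set of $A$. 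Picking a set-theoretic section $\sigma : A \to \Gamma$ (a group homomorphism in the split case), each element of $\Gamma$ admits a unique normal form $n \cdot \sigma(a)$ with $n \in N$, $a \in A$, and multiplication in this form is that of $N_\R \rtimes_\alpha A$ up to $2$-cocycle corrections in $N$ that vanish when the extension splits.

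Next I would fix Mal'cev coordinates of the second kind on $N_\R$ adapted to the lower central series, with associated weights $w_1, \dots, w_h$. For a large constant $L > K$ to be tuned later, I would construct a finite subset $U_n \subset N$ of cardinality $\asymp e^{LQn}$ (with $Q = \sum_i w_i$ the homogeneous dimension of $N_\R$) approximating a Mal'cev box of coordinate side lengths $e^{Lnw_i}$, together with a slightly smaller subset $U'_n \subset U_n$ obtained by shrinking each side length by a bounded additive amount on the logarithmic scale; the existence of such a $U_n$ inside the countable subgroup $N$ uses that the $A$-denominators of elements of $N$ reachable by word length at most $n$ in $\Gamma$ are themselves bounded exponentially. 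Setting
\[
\Omega_n = U_n \cdot \sigma\bigl(\llbracket -2n, 2n\rrbracket^d\bigr), \qquad \Omega'_n = U'_n \cdot \sigma\bigl(\llbracket -n, n\rrbracket^d\bigr),
\]
one obtains subsets of $\Gamma$ of cardinality $\asymp e^{Cn}$ whose projections onto $A$ are $\llbracket -2n, 2n\rrbracket^d$ and $\llbracket -n, n\rrbracket^d$ by construction.

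The third step is to verify the three F\o lner-couple axioms. The size bound $\#\Omega_n \leq Ce^{Cn}$ is immediate from the count on $U_n$ and the polynomial size of the $A$-factor. For the inclusion $\Omega'_n S^n \subset \Omega_n$, I would write any element of $S^n$ in the normal form $(\nu, a)$ with $\|a\| \leq n$ and $\nu \in N$ a product of at most $n$ conjugates $\sigma(v) s_N \sigma(v)^{-1}$ for $\|v\| \leq n$ and $s_N \in S_N$; by the distortion bound each such conjugate has Mal'cev coordinates of size at most $e^{Kn}$, so $\nu$ lies in a box of side length at most $Cne^{Kn}$, which is absorbed inside the margin between $U'_n$ and $U_n$ as soon as $L$ is large enough compared to $K$. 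The same choice of $L$ gives $\#U'_n/\#U_n \geq c > 0$, and combined with the elementary ratio $\#\llbracket -n, n\rrbracket^d/\#\llbracket -2n, 2n\rrbracket^d \geq 2^{-d}$ this yields the required lower bound on $\#\Omega'_n/\#\Omega_n$.

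The main obstacle is the bookkeeping in the non-split case: the product $U_n \cdot \sigma(\llbracket -2n,2n\rrbracket^d)$ is then not a direct product, and rewriting elements of $\Omega_n$ into normal form picks up $2$-cocycle values in $N$ for each pair drawn from $\llbracket -2n,2n\rrbracket^d$. These cocycle terms themselves grow at most exponentially in $n$ by the same distortion bound, and can be absorbed into $U_n$ by enlarging $L$ one more time. In the split case the section $\sigma$ is a group homomorphism, the cocycle vanishes, and $\Omega_n = U_n \times \llbracket -2n, 2n\rrbracket^d$ is literally a product, a \emph{split} F\o lner couple of the form required by the refinement. In either case the projection to $A$ is the classical F\o lner couple $(\llbracket -2n, 2n\rrbracket^d, \llbracket -n, n\rrbracket^d)$, completing the verification.
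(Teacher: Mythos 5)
You are not being compared against an argument in this paper at all: the paper imports this statement verbatim from Pittet--Saloff-Coste \cite{PSC03} (their Proposition 7.8) and gives no proof, so your sketch must be judged against their construction, which it broadly resembles in spirit — normal forms $n\cdot\sigma(a)$, exponentially growing ``boxes'' in $N$, and a margin absorbing both the exponential distortion of the conjugation action and, in the non-split case, the cocycle corrections. The split/projection refinements and the Archimedean margin argument ($L$ large compared with $K$, volume ratio of the shrunken box tending to $1$) are fine as far as they go.

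The genuine gap is the construction of $U_n$ and $U'_n$ when $N$ is not finitely generated, i.e.\ not a lattice in its real Mal'cev completion — and this is not a marginal case but the central one for this paper: already $\Gamma=\Z[1/2]\rtimes\Z$ has $N=\Z[1/2]$ dense in $N_\R=\R$, and the characteristic-zero rank-one modules $\Z Q/\mathcal P$ to which the proposition is applied are of exactly this kind. For such $N$ the set of elements in a real Mal'cev box is infinite, so a denominator bound must be imposed; then your cardinality claim $\#U_n\asymp e^{LQn}$ (real volume) is no longer what counts the set, and, more seriously, the two remaining axioms need an argument you do not give: the inclusion $\Omega'_nS^n\subset\Omega_n$ requires that the conjugates $\sigma(v)s_N\sigma(v)^{-1}$ and the cocycle terms do not push elements outside the denominator bound, while the positive-proportion axiom fails outright if you shrink the denominator bound from $U_n$ to $U'_n$ the way you shrink the Euclidean sides, since that costs an exponential factor in cardinality. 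The one-line remark that denominators grow at most exponentially does not arbitrate between these two constraints. The standard repair — and in substance what \cite{PSC03} does for finite-rank groups — is to treat the non-Archimedean places on the same footing as the real one: the relevant subgroup of $N$ sits as a cocompact lattice in a product $N_\R\times\prod_{p\in S}N_{\mathbb{Q}_p}$, one takes boxes that are products of Euclidean boxes and compact open subgroups, each growing exponentially, and one puts a margin only in the Archimedean factors; no margin is needed $p$-adically because ultrametric balls are subgroups and absorb the exponentially bounded errors, so the proportion is governed by the real factors alone and stays bounded below. Equivalently, in elementary terms, keep the \emph{same} denominator subgroup (of exponentially large index, with exponent $L>K$) for both $U_n$ and $U'_n$ and shrink only the numerator box. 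With that correction your margin and cocycle bookkeeping go through and yield the stated split couples projecting onto the classical couples of $A$; without it, the proof does not cover the groups for which the proposition is actually invoked.
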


\begin{proof}[Proof of Theorem \ref{thm-characterization}]
The reverse implication follows from Proposition \ref{prop-special subgroups}.

To prove the direct implication, let $G$ be a finitely generated metabelian group of Krull dimension at most $1$. We wish to prove a lower bound on its return probability, therefore, by Proposition \ref{prop-fin ind} and Theorem \ref{thm-embedding}, we may assume that $G$ is a semi-direct product $M \rtimes Q$ where $M$ is abelian and $Q$ is free abelian. Equip $G$ with a generating set $S = S_M \sqcup S_Q$ where $S_M\subset M$ generates $M$ as a $\Z Q$-module and $S_Q\subset Q$ generates the group $Q$. Denote by $d$ the rank of $Q$ and set $F = (F_n, F'_n)_n = (\llbracket -2n, 2n \rrbracket^d, \llbracket -n, n \rrbracket^d)_n$ the classical sequence of F\o lner couples for $Q$.

Proposition \ref{prop-dec M} provides us with submodules of $M$
\[ \{0\} = M_0 \leq M_1 \leq \dots \leq M_n = M\]
such that $M_{i+1}/M_i$ is isomorphic to $\Z Q/ \mathcal{P}_i$ with $\mathcal{P}_i$ a prime ideal of $\Z Q$. Each of these cyclic modules $\Z Q/ \mathcal{P}_i$ has dimension at most $1$ and by Proposition \ref{prop-folner seq ring p} and Lemma \ref{lem-folner seq for ring p} for the torsion case, and Propositions \ref{prop-dim 1 torsion-free} and \ref{thm-psc} for the torsion-free case, the groups $\Z Q/ \mathcal{P}_i \rtimes Q$ admits F\o lner couples of exponential size whose projection onto $Q$ is $F$. 

We combine them by iterative applications of Lemma \ref{lem-lifting} to obtain a sequence of F\o lner couples  of exponential size in $G$. The conclusion then follows from \ref{cor-lower bound}.
\end{proof}

\subsection{Application to the $L^2$-isoperimetric profile} (see for instance \cite{Cou00})  The \emph{$L^2$-isoperimetric profile} of a finitely generated group $G$, equipped with a finite and symmetric generating set $S$, is the non-decreasing function
\[ j_G(v) = \sup_{\#A \le v}~ \sup_{f \in L^2(A)}~ \frac{\lVert f \rVert_2}{\sup_{s\in S} \lVert f - f(s^{-1}.) \rVert_2 }, \]
where $L^2(A)$ denotes the set of functions $f : G \rightarrow \R$ supported in $A$.

We recall the following theorem from \cite{T13}, which is a particular case of\cite{Cou00} Theorem $6.1$, generalized to the setting of metric measure spaces in \cite{T08}.

\begin{thm}[\cite{T13}, Theorem $2.7$]
Let $G$ be a finitely generated group. Then the isoperimetric profile of $G$ satisfies $j_2(v) \sim \ln v$ if and only if $p_{2n}^G \sim \exp(-n^\frac1{3})$.
\end{thm}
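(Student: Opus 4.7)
The plan is to derive this as a direct application of the general equivalence, established by Coulhon in \cite{Cou00} and extended by Tessera in \cite{T08, T13}, between the $L^2$-isoperimetric profile of a finitely generated group and the decay of its return probability. For a regular enough nondecreasing function $\Lambda$, this dictionary reads
\[ j_2(v) \sim \Lambda(v) \Longleftrightarrow p_{2n}^G \sim 1/\Psi_\Lambda(n), \]
where $\Psi_\Lambda(n)$ is defined implicitly by the Nash-type relation $n \asymp \int_1^{\Psi_\Lambda(n)} \Lambda(v)^2 v^{-1}\, dv$. Substituting $\Lambda(v) = \ln v$, the elementary computation
\[ \int_1^N \frac{(\ln v)^2}{v}\, dv = \frac{(\ln N)^3}{3} \]
yields $\ln \Psi_\Lambda(n) \asymp n^{1/3}$, which is precisely the asymptotic behaviour claimed.

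The forward implication is the classical half of the dictionary: from $j_2(v) \succsim \ln v$ one extracts a Nash-type inequality for functions in $\ell^2(G)$, and the Nash--Varopoulos--Coulhon argument then produces a differential inequality of the form $u'(n) \precsim - u(n)/(\ln(1/u(n)))^2$ on $u(n) = p_{2n}^G$. Integration (via the substitution $w = \ln(1/u)$) gives $w^3 \succsim n$, that is $p_{2n}^G \precsim \exp(-c n^{1/3})$. The matching lower bound $p_{2n}^G \succsim \exp(-n^{1/3})$ from $j_2(v) \precsim \ln v$ follows by constructing F\o lner couples of exponential size out of the isoperimetric upper bound and applying Corollary~\ref{cor-lower bound}.

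The converse implication is the more delicate direction: I would follow the approach of Tessera in \cite{T13}, which extracts from the exponential heat-kernel decay $p_{2n}^G \sim \exp(-n^{1/3})$ a matching control on the Dirichlet spectral gap of arbitrary finite subsets $A \subset G$, and hence on $j_2(v)$ by inversion of the Coulhon correspondence above. The main obstacle is this converse step, which requires the full isoperimetric-profile machinery of \cite{T08, T13} to handle the regularity of $j_2$ and to check that the asymptotic equivalence classes on the two sides of the dictionary match.
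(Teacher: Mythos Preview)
The paper does not prove this theorem at all: it is stated as a citation from \cite{T13}, explicitly introduced as ``a particular case of \cite{Cou00} Theorem~6.1, generalized to the setting of metric measure spaces in \cite{T08}.'' There is no proof in the paper to compare your proposal against; the result is simply recalled from the literature and then applied to deduce the corollary on metabelian groups.

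That said, your sketch is a faithful outline of how the cited results assemble, and it correctly identifies the Coulhon dictionary and the computation $\int_1^N (\ln v)^2 v^{-1}\,dv = (\ln N)^3/3$ as the reason the exponent $1/3$ appears. One small point: in your forward direction, the step ``$j_2(v) \precsim \ln v$ implies $p_{2n}^G \succsim \exp(-n^{1/3})$ by constructing F\o lner couples of exponential size out of the isoperimetric upper bound'' is not quite right as stated. An \emph{upper} bound on $j_2$ says that no finite set has large spectral gap relative to its volume; it does not by itself hand you good F\o lner couples. The correct route for that half is rather the converse of the Nash argument (as in \cite{Cou00}), or equivalently the test-function/spectral estimate that bounds $p_{2n}$ from below in terms of the Dirichlet eigenvalue of near-optimal sets. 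This is a genuine but standard part of the Coulhon machinery, and your reference to \cite{Cou00, T08, T13} covers it.
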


Hence, our main theorem may be stated as follows:

\begin{cor}
Let $G$ be a finitely generated metabelian group of exponential growth. Let $d$ be the Krull dimension of $G$. Then,
\[ j_G(v) \sim \ln v \Leftrightarrow d\le 1. \]
\end{cor}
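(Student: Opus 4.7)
The plan is to combine three ingredients already at hand: the Hebisch--Saloff-Coste upper bound \eqref{eq-HSC}, the main Theorem~\ref{thm-characterization}, and the Tessera--Coulhon equivalence between the $L^2$-isoperimetric profile and the return probability stated just before the corollary.

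First I would use the Tessera theorem to translate the conclusion entirely to the return probability: $j_G(v) \sim \ln v$ is equivalent to $p_{2n}^G \sim \exp(-n^{1/3})$. Hence it suffices to show that, for a finitely generated metabelian group $G$ of exponential growth, one has $p_{2n}^G \sim \exp(-n^{1/3})$ if and only if $\krull(G) \le 1$.

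Next, because $G$ has exponential growth, the Hebisch--Saloff-Coste inequality \eqref{eq-HSC} gives the upper bound $p_{2n}^G \precsim \exp(-n^{1/3})$ for free. Therefore the equivalence $p_{2n}^G \sim \exp(-n^{1/3})$ reduces to the matching lower bound $p_{2n}^G \succsim \exp(-n^{1/3})$. This is precisely the content of Theorem~\ref{thm-characterization}, which asserts that the latter lower bound holds if and only if $\krull(G) \le 1$. Combining the two steps yields the desired equivalence.

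The only mild subtlety to address is that Theorem~\ref{thm-characterization} is stated without any growth hypothesis, so I should note explicitly that the growth assumption is used solely to invoke \eqref{eq-HSC} and close the loop between $\succsim$ and $\sim$. There is no genuine obstacle here; the corollary is essentially a one-line combination of the main theorem with the two external theorems, so the proposal is simply to chain the three implications together in the correct direction.
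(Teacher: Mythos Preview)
Your proposal is correct and matches the paper's approach exactly: the paper leaves the corollary unproved, presenting it simply as the translation of Theorem~\ref{thm-characterization} (combined with \eqref{eq-HSC}) through the Tessera--Coulhon equivalence stated just above it. Your chain of implications is precisely what the paper intends.
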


\bibliographystyle{alpha}
\bibliography{biblio_metabelian}

\end{document}